\documentclass[11pt]{amsart}

\usepackage{amsthm}
\usepackage{amsmath}
\usepackage{amssymb}
\usepackage{amsrefs}
\usepackage{hyperref}
\usepackage{enumitem}

\usepackage[mathlines,modulo]{lineno}
%\linenumbers

\usepackage[color=yellow!20,linecolor=orange]{todonotes}

\theoremstyle{plain}
\newtheorem{theorem}{Theorem}[section]
\newtheorem{proposition}[theorem]{Proposition}
\newtheorem{lemma}[theorem]{Lemma}
\newtheorem{corollary}[theorem]{Corollary}
\newtheorem{conjecture}[theorem]{Conjecture}

\theoremstyle{definition}
\newtheorem{definition}[theorem]{Definition}
\newtheorem{claim}{Claim}[theorem]

\newcommand{\dash}{\nobreakdash-\hspace{0pt}}
\newcommand{\ba}{\backslash}
\newcommand{\cl}{\operatorname{cl}}
\newcommand{\st}{\operatorname{star}}
\newcommand{\mso}{\ensuremath{\mathit{MS}_{0}}}

\newcommand{\formula}[1]{\text{\fontfamily{cmss}\selectfont \textup{#1}}}
\newcommand{\ind}{\formula{Ind}}
\newcommand{\emp}{\formula{Empty}}
\newcommand{\sing}{\formula{Sing}}

\sloppy

\title[Defining bicircular matroids]{Defining bicircular matroids in monadic logic}

\author[Funk]{Daryl Funk}
\author[Mayhew]{Dillon Mayhew}
\author[Newman]{Mike Newman}

\begin{document}

\begin{abstract}
We conjecture that the class of frame matroids can be characterised by a sentence in the monadic second-order logic of matroids, and we prove that there is such a characterisation for the class of bicircular matroids.
The proof does not depend on an excluded-minor characterisation.
\end{abstract}

\maketitle

\section{Introduction}

The ability of monadic second-order logic to express graph properties has been investigated for some time \cites{MR1451381, MR1130377, MR743800}.
This study continues to be motivated by Courcelle's famous theorem showing that such a property can be tested in polynomial time for any class of input graphs with bounded tree-width \cite{MR1042649}.
On the other hand, the literature on matroid properties that can be expressed in monadic second-order logic is more recent, and sparser \cites{MR2081597,MR3803151}.
Our aim here is to add to this literature, by making progress towards the following conjecture.

\begin{conjecture}
\label{conj_mainconjecture}
There is a sentence in the monadic second-order language of matroids which characterises the class of frame matroids.
\end{conjecture}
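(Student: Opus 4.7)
The natural approach is to reduce MSO-definability of frame matroids to MSO-definability of biased-graph representations: $M$ is a frame matroid if and only if there exists a biased graph $(G, \mathcal{B})$ whose frame matroid equals $M$. The plan is to write an MSO sentence over $E(M)$ that existentially guesses, as monadic parameters, the data of such a biased graph and then verifies that the induced frame matroid matches $M$.

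The first hurdle is that the vertex set of $G$ is extrinsic to $M$. One circumvents this by working with the full-framed extension $N$ obtained by adjoining a frame basis $J$ of size at most $r(M)$; the elements of $J$ then play the role of vertex markers. To simulate the inclusion of $J$ without enlarging the ground set, I would guess, for each $e \in E(M)$, the identity of its (at most two) endpoints in $N$. Because the endpoint multiplicity is bounded by two, a bounded number of monadic parameters $S_{1}, S_{2} \subseteq E(M)$ suffices to record the endpoint pair of every element after suitable encoding, and an additional monadic parameter $B \subseteq E(M)$ records which cycles of $G$ are balanced. The frame condition, consistency of incidences, and agreement of the guessed biased graph's frame matroid with $M$ then translate into formulas quantifying only over elements and subsets of $E(M)$, using the matroid's independence predicate.

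The decisive obstacle is the \emph{theta property} satisfied by the family $\mathcal{B}$ of balanced cycles: in every theta subgraph of $G$, if two of the three constituent cycles lie in $\mathcal{B}$, then so does the third. Expressing this condition in MSO is not obviously possible, because it appears to require quantifying over arbitrary cycles of the guessed graph and checking a global coherence condition across the balanced family. Bicircular matroids, the case proved in this paper, sidestep this entirely because no cycle is balanced, so the data of $\mathcal{B}$ is vacuous. For the general conjecture, a fresh structural insight is needed, presumably an equivalent local reformulation of the balanced family in terms of a bounded amount of monadic parameters attached to the guessed incidence data. I expect this step, rather than the mechanical translation of graph-theoretic conditions into MSO, to be the crux of any proof of the conjecture.
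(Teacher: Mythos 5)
The statement you are addressing is Conjecture \ref{conj_mainconjecture}, which the paper does not prove; it proves only the bicircular special case (Theorem \ref{thm_main}). Your text is likewise not a proof but a strategy sketch, and to your credit it says so. Still, beyond the obstacle you flag, there are concrete defects in the sketch itself. First, the proposed encoding of incidences is not available in monadic second-order logic: a frame representation may have up to $r(M)$ vertices, and a function assigning to each element of $E(M)$ an unordered pair from an unbounded vertex set cannot be recorded by a bounded number of monadic parameters $S_{1},S_{2}\subseteq E(M)$. The paper's actual device (in the bicircular case) is the opposite: vertices are realised as \emph{definable subsets of the ground set} --- vertex stars recovered as non-separating cocircuits (Proposition \ref{stars}) or ``good cocircuits'' (Lemma \ref{lem_goodcocircuits}) --- so that the graph $G(\mathcal{F})$ is built from a definable family $\mathcal{F}_{\varphi}(M)$ plus a bounded number of guessed sets for the few ``uncommitted'' vertices (Lemma \ref{billybob}); incidence is then just membership, policed by $\formula{Graphical}^{k}_{\varphi}$. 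Any attack on the frame conjecture along your lines would need an analogous intrinsic, definable realisation of the vertices, and this is a structural problem about frame matroids, not a logical bookkeeping issue. Second, your parameter $B\subseteq E(M)$ ``recording which cycles are balanced'' is not type-correct: a linear class $\mathcal{B}$ is a set of cycles, i.e.\ a family of subsets, and monadic logic does not permit quantification over (or guessing of) such families. So the difficulty arises even before the theta condition: one cannot even name $\mathcal{B}$, let alone verify its coherence. (This is presumably why the authors phrase their further conjectures in terms of gain-graphic matroids, where $\mathcal{B}$ is determined by edge labels from a fixed finite group and hence by boundedly many monadic parameters.)

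Finally, your sketch omits the connectivity reduction entirely. Neither frame nor bicircular matroids form a class closed under $2$\dash sums, and roughly half of the paper (Section \ref{Reduceto3}, culminating in Theorem \ref{treebeard} and the definable access to the Cunningham--Edmonds decomposition tree via Propositions \ref{characterisingcomponents}--\ref{transducedcomponents}) is devoted to expressing, in \mso, conditions on the $3$\dash connected components and on the circuit nodes of the decomposition tree. Any complete argument, for bicircular or frame matroids, must do something of this kind; a sentence verified componentwise against a single guessed representation of the whole matroid will not assemble correctly across $2$\dash separations.
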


The \emph{monadic second-order logic for matroids} (\mso) is formally defined in Section \ref{logic}.
We also conjecture that there are \mso\dash characterisations of Zaslavsky's class of lift matroids \cite{MR1088626} and the class of quasi-graphic matroids \cites{MR4037634, MR3742182}.
Let $G$ be a graph.
If $X$ is a set of edges in $G$, then $V(X)$ is the set of vertices incident with at least one edge in $X$, and $G[X]$ is the subgraph with $X$ as its edge-set and $V(X)$ as its vertex-set.
A \emph{bicycle} of $G$ is a minimal set, $X$, of edges such that $G[X]$ is connected and contains more than one cycle.
If $X$ is a bicycle, then $G[X]$ is a \emph{theta subgraph}, a \emph{loose handcuff}, or a \emph{tight handcuff}.
A theta subgraph consists of two distinct vertices and three internally disjoint paths that join them.
A loose handcuff is a pair of vertex-disjoint cycles, along with a minimal path that joins them, and a tight handcuff is a pair of edge-disjoint cycles that have exactly one vertex in common.
A \emph{linear class of $G$} is a set of cycles, $\mathcal{B}$,  such that no theta subgraph of $G$ contains exactly two cycles in $\mathcal{B}$.
Given such a class, there is a corresponding \emph{frame matroid} $F(G,\mathcal{B})$.
The ground set of $F(G,\mathcal{B})$ is the edge-set of $G$, and
the circuits of $F(G,\mathcal{B})$ are exactly the edge-sets of cycles in $\mathcal{B}$, along with the bicycles that do not contain any cycles in $\mathcal{B}$.

We are quite optimistic that Conjecture \ref{conj_mainconjecture} can be positively resolved.
In this paper, we establish the monadic definability of an important subclass of frame matroids.
A matroid is \emph{bicircular} if it is equal to $F(G,\mathcal{B})$ for some graph $G$, where $\mathcal{B}$ contains no cycle with more than one edge.
We write $B(G)$ for $F(G,\emptyset)$.
Thus the circuits of $B(G)$ are precisely the bicycles of $G$.
Every bicircular matroid is the direct sum of a rank-zero uniform matroid with a matroid of the form $B(G)$, so every connected bicircular matroid is either equal to $B(G)$ for some $G$, or is $U_{0,1}$.
Our main result is as follows:

\begin{theorem}
\label{thm_main}
There is a sentence in the monadic second-order language of matroids which characterises the class of bicircular matroids.
\end{theorem}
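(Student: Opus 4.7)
My plan is to construct a single sentence $\varphi \in \mso$ whose models are exactly the bicircular matroids. The principal technical challenge is that for $M = B(G)$, the vertex set of $G$ is external to the ground set $E(M)$, so no monadic quantifier can bind the vertices directly; the whole task is to reconstruct enough of $G$ inside $M$ using only subset quantifiers over $E(M)$.

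The first step is a local classification of elements. For each $e \in E(M)$, I would define in $\mso$ a bounded set of ``types'' corresponding to its possible roles in a putative underlying graph (e.g.\ matroid-loop, loop of $G$ parallel in the matroid to other loops at a shared vertex, non-loop lying on a theta or handcuff of a specific shape). Each type is a short $\mso$ formula built from $\ind$, $\emp$, $\sing$ together with the small circuits and cocircuits through $e$; collectively the types supply a vocabulary with which to describe bicycles and their interaction.

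The second step encodes the vertex-incidence structure of $G$. Since $|V(G)|$ is unbounded, we cannot directly quantify over the family of stars $\{\st(v) : v \in V(G)\}$. My plan is instead to existentially quantify over a single set $T \subseteq E(M)$ acting as a \emph{skeleton} (a basis of $M$, which for a bicircular $M = B(G)$ is a spanning pseudoforest of $G$ with exactly one cycle per non-tree component), together with a bounded tuple of auxiliary label-sets partitioning the ground set by skeleton-relative types. For each $e \notin T$, the fundamental circuit $C(e,T)$ is a bicycle whose theta/handcuff shape tells us how $e$ attaches to $T$. The sentence $\varphi$ will then assert: there exist $T$ and labels such that (a) they describe a well-defined graph $H$ via an $\mso$-readable incidence rule, and (b) the circuits of $M$ coincide precisely with the bicycles of $H$, which is checkable in $\mso$ using the types from step one. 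Soundness will rest on a Whitney-style rigidity statement for bicircular matroids: up to a controllable ambiguity, the underlying graph is determined by $M$.

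The main obstacle, and the heart of the proof, will be making this encoding rigorous on both sides: every bicircular $M$ must admit a valid witness $(T,\text{labels})$, and conversely the $\mso$-conditions must force any $M$ with a witness to be bicircular. Establishing the reconstruction lemma in a form that is expressible with a bounded amount of $\mso$ data is the central work. A secondary difficulty is the detailed case analysis for degenerate matroids (those with many matroid-loops or parallel classes, direct summands isomorphic to $U_{0,1}$, and putative graphs containing isolated vertices or entirely tree components); each such regime must be covered by a dedicated clause in $\varphi$ so that the sentence is correct on all matroids.
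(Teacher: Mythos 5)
Your proposal correctly identifies the central obstruction (the vertices of $G$ live outside the ground set of $M$), but the mechanism you offer for overcoming it does not work as described, and the two places where you concede that the ``central work'' remains are exactly where the proof lives. Your skeleton idea is to existentially quantify a basis $T$ plus boundedly many label sets and read off incidences from fundamental circuits. The difficulty is that $M|T$ is a free matroid, so the matroid carries no information about the graph structure of $G[T]$ itself; to interpret a fundamental circuit $C(e,T)$ as a theta or handcuff you already need to know how the edges of $T$ meet at vertices, and that incidence relation ranges over an unbounded vertex set which a bounded tuple of monadic parameters cannot enumerate. The paper escapes this circularity differently: it proves that the vertex stars themselves form an \mso\dash definable family of subsets --- they are exactly the non-separating cocircuits (Propositions \ref{stars} and \ref{sammy}), or in the harder case the good cocircuits (Lemma \ref{lem_goodcocircuits}) --- up to a bounded number of exceptional vertices (Lemma \ref{billybob}), and it is only there that a bounded tuple of extra set variables genuinely suffices. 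Once the family $\{\st(v)\}$ is definable by a fixed formula $\varphi[X]$, quantification over vertices becomes quantification over sets satisfying $\varphi$, and bicycles, hence the circuits of $B(G)$, become expressible. Your ``Whitney-style rigidity'' is therefore not an ingredient to be assumed but the main structural theorem to be proved, and it holds only in the $3$\dash connected case with enough vertices.

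The second gap is that you never reduce to the $3$\dash connected case, and for merely connected bicircular matroids the rigidity you invoke fails: inequivalent graph representations abound and are not controlled by any bounded amount of data. The paper spends all of Section \ref{Reduceto3} on this, making the Cunningham--Edmonds decomposition tree \mso\dash accessible (Propositions \ref{characterisingcomponents}--\ref{transducedcomponents}) and, because bicircular matroids are \emph{not} closed under $2$\dash sums, proving a bespoke characterisation (Theorem \ref{treebeard}) of which decomposition trees arise: every circuit node has degree at most two, and each $3$\dash connected component must be bicircular \emph{as a rooted matroid}, with the relevant basepoints realised as graph loops. Nothing in your list of degenerate cases touches this, and a sentence that merely certified each piece of $M$ separately would accept non-bicircular matroids.
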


Let $\mathcal{M}$ be a class of matroids.
When we say that $\mathcal{M}$ has a \emph{decidable monadic second-order theory}, we mean that there exists a Turing machine that will accept as input any monadic second-order sentence.
The machine then decides in finite time whether or not the sentence holds for every matroid in $\mathcal{M}$.
Thus a decidable class has a theorem-testing machine which runs in finite time (even though the class itself may be infinite).
Let $\mathcal{M}$ be a minor-closed class of $\mathbb{F}$\dash representable matroids (where $\mathbb{F}$ is a finite field).
Hlin\v{e}n\'{y} and Seese have proved that $\mathcal{M}$  has a decidable monadic theory if and only if it has bounded branch-width \cite{MR2202497}.
Combining Theorem \ref{thm_main} with results from \cite{FMN-I} and \cite{FMN-II} produces the corollary that a class of bicircular matroids with bounded branch-width has a decidable monadic second-order theory.

Any minor-closed class of matroids with finitely many excluded minors can be characterised by a monadic second-order sentence \cite{MR2081597}.
The class of bicircular matroids is minor-closed.
DeVos and Goddyn announced that this class is characterised by finitely many excluded minors, but this proof did not appear in the literature.
A new proof has now been produced by DeVos, Funk, and Goddyn \cite{DFG}.
From their result we can deduce our main theorem, so one can reasonably ask why we present an alternative proof of Theorem \ref{thm_main} in this article.
We have two reasons.
Firstly, the proof by DeVos, Funk, and Goddyn does not provide an explicit list of excluded minors.
This means that we cannot construct an explicit sentence that characterises the class via excluded minors.
In contrast, our proof of Theorem \ref{thm_main} is constructive.
One could, with time and effort, use the results contained in this paper to write an explicit (and long) sentence characterising bicircular matroids.
Our second reason is that we hope to use the experience we gain in proving Theorem \ref{thm_main} as a stepping stone towards proving Conjecture \ref{conj_mainconjecture}.
The class of frame matroids is minor-closed, but it has infinitely many excluded minors \cite{MR3856704}.
Therefore no monadic characterisation of that class can rely on a list of excluded minors.
In order to prove Theorem \ref{thm_main}, we need to establish new structural facts about bicircular matroids.
We think that similar ideas will be required to prove Conjecture \ref{conj_mainconjecture}.

We conclude this introduction with some conjectures about \emph{gain-graphic} matroids.
These are frame matroids where a linear class of cycles is determined by an assignment of group elements to the edges of a graph.
Gain-graphic matroids over finite groups are restrictions of \emph{Dowling geometries} \cite{MR307951}, and they play an important role in the qualitative structural characterisation of minor-closed classes of matroids representable over a finite field, due to Geelen, Gerards, and Whittle \cite{MR3184116}.
This set of authors has also shown that when $\mathbb{F}$ is a finite field, any minor-closed class of matroids representable over $\mathbb{F}$ has finitely many excluded minors \cite{MR3221124}.
We conjecture that analogous results hold for classes of matroids that are gain-graphic over finite groups.

\begin{conjecture}
\label{gaingraphicrota}
Let $H$ be a finite group.
Let $\mathcal{M}$ be any minor-closed class of $H$\dash gain-graphic matroids.
Then $\mathcal{M}$ has only finitely many excluded minors, and can hence be defined in monadic second-order logic.
\end{conjecture}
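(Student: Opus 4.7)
The ``and hence'' part of the conjecture is immediate: once finitely many excluded minors are established, the cited result \cite{MR2081597} of Hlin\v{e}n\'y yields the desired monadic second-order sentence as the conjunction of forbidden-minor assertions. So the substantive task is to show that any minor-closed class $\mathcal{M}$ of $H$-gain-graphic matroids has only finitely many excluded minors.

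My plan is to adapt the strategy of Geelen, Gerards, and Whittle for Rota's conjecture (\cite{MR3221124}) to the gain-graphic setting, exploiting the fact, noted in the introduction, that $H$-gain-graphic matroids over a finite group $H$ embed as restrictions of the Dowling geometries $Q_n(H)$, and that the underlying gain graphs carry edge labels drawn from the finite set $H$. The central step is to prove an analog of the matroid minors structure theorem: every sufficiently connected $H$-gain-graphic matroid of sufficiently large branch-width should, up to bounded perturbation, arise from a large gain graph via the standard Dowling construction. Given such a structure theorem, the classical Geelen--Gerards--Whittle scheme would then proceed in two steps. First, $H$-gain-graphic matroids of bounded branch-width are well-quasi-ordered under the minor relation, by a Robertson--Seymour-type argument applied to the finitely edge-labelled gain graphs. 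Second, the excluded minors of any minor-closed subclass $\mathcal{M}$ must themselves have bounded branch-width --- else the structure theorem would exhibit them as perturbations of large gain graphs and force proper minors that already lie outside $\mathcal{M}$, contradicting minimality. Combined, these two steps present the excluded minors as an antichain in a well-quasi-ordered collection, hence as a finite set.

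The principal obstacle is the structure theorem itself. The $\mathbb{F}$-representable case \cite{MR3184116} occupied a multi-decade programme, and the gain-graphic setting currently lacks most of the analogous infrastructure: there is no developed theory of gain-graphic perturbations, no established grid theorem for group-labelled graphs that respects the gain function, and no tangle or flower calculus adapted to gain assignments. A secondary difficulty is that a single matroid may be presented by many non-isomorphic $H$-gain graphs, so one must canonicalise these presentations enough that minor containment at the matroid level can be transferred to a minor-like relation on gain graphs, which is a prerequisite for the Robertson--Seymour step above. For these reasons a proof of Conjecture \ref{gaingraphicrota} should, in my view, be treated as an extended research programme whose principal innovation would be a workable gain-graphic analog of the matroid minors structure theorem; the well-quasi-ordering and excluded-minor bounding steps are comparatively routine once that ingredient is in place.
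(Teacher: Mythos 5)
The statement you are addressing is Conjecture~\ref{gaingraphicrota}, which the paper explicitly presents as an open conjecture: it offers no proof, and indeed remarks only that the well-quasi-ordering part (Conjecture~\ref{gaingraphicwqo}) is expected to follow from the ongoing Geelen--Gerards--Whittle programme, at least for abelian $H$. Your submission is likewise not a proof; you say so yourself in the final paragraph, where you describe it as ``an extended research programme.'' So there is nothing to compare against: neither the paper nor your proposal establishes the statement, and what you have written is a strategy outline whose central ingredient --- a gain-graphic analogue of the matroid minors structure theorem --- is missing.

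Beyond the acknowledged absence of the structure theorem, there is a concrete gap in the scheme as you describe it. Your two-step argument presents the excluded minors of $\mathcal{M}$ as an antichain in a well-quasi-ordered collection of $H$-gain-graphic matroids of bounded branch-width. But an excluded minor of a minor-closed class of $H$-gain-graphic matroids need not itself be $H$-gain-graphic: it may fail to lie in $\mathcal{M}$ simply because it is not gain-graphic at all. The paper is careful about exactly this point when it notes that Conjecture~\ref{gaingraphicwqo} only bounds the excluded minors ``that are themselves $H$-gain-graphic.'' Handling the remaining excluded minors is precisely the hard part of the Rota-type argument (in the representable setting, Geelen, Gerards, and Whittle must control matroids that are \emph{not} representable over $\mathbb{F}$), and your outline does not address it. Any serious attack on this conjecture would need both the structure theorem you identify and a mechanism for bounding the non-gain-graphic excluded minors.
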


The following conjecture is widely believed, but may not have appeared in print.

\begin{conjecture}
\label{gaingraphicwqo}
Let $H$ be a finite group.
The class of $H$\dash gain graphic matroids is well-quasi-ordered.
\end{conjecture}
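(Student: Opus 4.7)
The plan is to lift the question from matroids to $H$-gain graphs and then extend the machinery of the Robertson--Seymour graph minors project to this enriched setting. An $H$-gain graph is a pair $(G,\varphi)$ where $\varphi$ assigns an element of $H$ to each oriented edge of $G$, and two such pairs represent the same frame matroid precisely when they differ by a finite sequence of \emph{switching} operations at vertices. My first step is to fix a canonical notion of minor on gain graphs that corresponds to matroid minor operations modulo switching: deletion is immediate, while contraction of a non-loop edge with gain $h$ involves first switching so that $h$ becomes the identity. Because $H$ is finite, each switching orbit has bounded local complexity, and the class of $H$-gain-graphic matroids pulls back to a well-defined minor quasi-order on equivalence classes of gain graphs; it suffices to establish WQO at that level.

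The core task is then to prove that the class of $H$-gain graphs is well-quasi-ordered under this relation, and I would follow the Robertson--Seymour template by splitting into two regimes according to branch-width. In the bounded branch-width regime, a gain graph can be encoded as a labelled tree over a bounded alphabet, with labels recording both the local incidence pattern and the group element on each edge; Higman's lemma together with Kruskal's tree theorem should then yield WQO fairly directly, and this should dovetail with the \mso{}-based machinery already developed in \cite{FMN-I} and \cite{FMN-II}. In the unbounded branch-width regime, the aim is a grid-type theorem: any $H$-gain graph of sufficiently large branch-width contains, as a gain-minor, one of a fixed finite family of large ``gain grids'', after which a classical exchange argument rules out infinite antichains.

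The main obstacle, by a wide margin, will be the unbounded branch-width case. Even for ordinary graphs the structure theorem describing graphs excluding a fixed minor is famously deep, and for gain graphs the analogue must track not only topological decomposition into pieces almost embeddable in surfaces, but also the interaction of the gain function with cuts, vortices, and apex vertices. I would first try to adapt the tangle-based approach of Geelen, Gerards, and Whittle, who have proved WQO for matroids representable over a finite field. When $H$ embeds in $\mathbb{F}^{\ast}$ for some finite field $\mathbb{F}$ -- equivalently, when $H$ is cyclic -- the $H$-gain-graphic matroids are $\mathbb{F}$-representable and their result resolves Conjecture \ref{gaingraphicwqo} immediately. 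The genuinely new ground, and the place where I expect the proof to stall, is the case of non-cyclic $H$: no ambient field is available, field coordinates must be replaced by intrinsic gain-theoretic invariants in the tangle analysis, and it is here that substantively new ideas, perhaps feeding back from the structural work used in Theorem \ref{thm_main}, will be needed.
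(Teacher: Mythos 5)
The statement you are trying to prove is Conjecture \ref{gaingraphicwqo}: the paper offers no proof of it, explicitly labels it as open (``widely believed, but may not have appeared in print''), and defers its resolution to the ongoing matroid minors project of Geelen, Gerards, and Whittle, at least for abelian $H$. So there is no paper proof to compare against, and your submission must be judged on its own terms as a proof attempt.

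As such, it has a genuine and decisive gap: the entire load-bearing step --- a grid theorem and structure theorem for $H$-gain graphs of large branch-width, feeding a tangle-based well-quasi-ordering argument --- is not carried out, and you say yourself that this is ``the place where I expect the proof to stall.'' A research plan that names the hard step and declines to do it is not a proof. Two further points deserve care even as a plan. First, the reduction from matroid WQO to gain-graph WQO requires that your chosen gain-graph minor operations (in particular contraction of unbalanced edges and of unbalanced loops, which behave differently from balanced edges at the matroid level) always project to matroid minors, and that every matroid in the class is hit; this is plausible but needs to be pinned down before the bounded branch-width case via Kruskal/Higman can even be stated correctly. Second, in the cyclic case you invoke WQO of $\mathbb{F}$-representable matroids as ``resolving the conjecture immediately,'' but that result is itself part of the announced-but-unpublished Geelen--Gerards--Whittle programme --- the paper cites only the excluded-minor announcement \cite{MR3221124} --- so even your base case rests on a theorem without a published proof. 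The outline is a reasonable description of how one might eventually attack Conjecture \ref{gaingraphicwqo}, but it establishes nothing.
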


Conjecture \ref{gaingraphicwqo} implies any minor-closed class of $H$\dash gain-graphic matroids has at most finitely many excluded minors that are themselves $H$\dash gain-graphic.
It is expected that the project of Geelen, Gerards, and Whittle will resolve Conjecture \ref{gaingraphicwqo}, at least in the case that $H$ is abelian (see \cite{Huynh09}*{Theorem 2.5.2}).
Note that the finiteness of $H$ is required for Conjectures \ref{gaingraphicrota} and \ref{gaingraphicwqo}, because DeVos, Funk, and Pivotto have shown that when $H$ is infinite, the class of $H$\dash gain-graphic matroids has infinitely many excluded minors and is not well-quasi-ordered \cite{MR3267062}.

The results in \cites{MR3221124, MR3803151} provide a clear dichotomy for matroid representation over fields: the class of $\mathbb{F}$-representable matroids is definable in monadic second-order logic if and only if $\mathbb{F}$ is finite.
The next conjecture, along with Conjecture \ref{gaingraphicrota}, would imply that a similar dichotomy exists for classes of gain-graphic matroids.

\begin{conjecture}
\label{conj_infinitegroup}
Let $H$ be an infinite group.
The class of $H$-gain-graphic matroids cannot be characterised by a sentence in monadic second-order logic.
\end{conjecture}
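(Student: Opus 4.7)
The plan is to argue by contradiction using an Ehrenfeucht--Fra\"iss\'e game for \mso. Suppose a sentence $\varphi$ of quantifier rank $q$ defines the class of $H$-gain-graphic matroids. I would aim to construct, for $n$ sufficiently large, a pair of matroids $(A_n, B_n)$ such that $A_n$ is $H$-gain-graphic, $B_n$ is not, and yet $A_n$ and $B_n$ agree on every \mso\ sentence of quantifier rank at most $q$. This contradicts the existence of $\varphi$.

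The source of such pairs is the theorem of DeVos, Funk, and Pivotto \cite{MR3267062} that the class of $H$-gain-graphic matroids has infinitely many excluded minors whenever $H$ is infinite. These excluded minors are typically built from long chains of cycles whose overall realisability by $H$-gains reduces to an equation of unbounded length over $H$; the obstruction occurs precisely when certain products of elements of $H$ fail to equal the identity. I would take $B_n$ to be such an excluded minor whose obstructing chain has length growing with $n$, and take $A_n$ to be the matroid obtained from $B_n$ by a single local modification (for example, changing one gain-label inside one gadget) that makes the equation satisfiable. By construction $A_n$ is $H$-gain-graphic and $B_n$ is not, and the two matroids differ only on a bounded number of elements that are concentrated in a single region of the chain.

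The core of the argument is then a locality or composition statement for matroid \mso: because $A_n$ and $B_n$ consist of a long chain of identical substructures glued along small connectors, the duplicator should win the $q$-round \mso\ game between them by a standard pumping argument once $n$ exceeds a threshold depending on $q$. The principal obstacle is establishing this locality result. Composition theorems for \mso\ are classical on graphs but far less developed for matroids, and the same matroid may arise from non-isomorphic graphs, so one cannot simply transfer graph locality. I would attempt to route around this by first showing that, for frame matroids, the \mso-type is determined by an MSO-type of an appropriate biased graph (possibly leveraging the graph-recovery techniques developed in this paper for bicircular matroids), and then applying the standard graph composition theorem along the chain decomposition. Separate constructions of the excluded-minor chains may be needed according to whether $H$ is abelian or not, or torsion or torsion-free, but the overall scheme should be uniform.
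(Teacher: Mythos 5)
The first thing to say is that the paper does not prove this statement: it appears as Conjecture \ref{conj_infinitegroup} and is explicitly left open. The authors remark only that the techniques of \cite{MR3803151} suffice when $H$ contains elements of arbitrarily high order, and that the conjecture ``is unresolved only for infinite groups with finite exponent''. So there is no proof in the paper to compare yours against; what you have written is a proposal for settling an open problem, and it has to be judged on that basis.

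As a research programme your sketch is reasonable and is essentially the route the authors themselves gesture at, but it contains two genuine gaps. First, the composition or locality theorem for matroid \mso\ that you identify as ``the principal obstacle'' really is the whole difficulty, and you do not resolve it: the suggestion to recover a biased graph from the matroid and then apply graph composition theorems runs into exactly the non-uniqueness of graphical representations that this paper has to fight even in the bicircular case, and for general frame matroids nothing like Lemma \ref{lem_goodcocircuits} is available. Second, and more fundamentally, your construction of the pairs $(A_n,B_n)$ presupposes that the obstruction to being $H$\dash gain-graphic is an equation of unbounded length over $H$ whose failure can be toggled by a bounded local modification. When $H$ has elements of arbitrarily high order this is exactly the mechanism of \cite{MR3803151} and the argument can plausibly be completed; but the open case is precisely that of infinite groups with finite exponent (infinite direct powers of a finite group, Tarski monsters), and there it is not clear that the excluded minors of \cite{MR3267062} have the ``long chain'' form you rely on, nor that a sentence of bounded quantifier rank cannot distinguish the satisfiable from the unsatisfiable instances. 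Your closing claim that ``the overall scheme should be uniform'' across these cases is optimistic: the finite-exponent case is the one the authors single out as genuinely unresolved, and your proposal does not engage with it.
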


In fact we conjecture that when $H$ is infinite, the class of $H$-gain-graphic matroids cannot be characterised in \emph{counting monadic second-order logic}, which augments standard monadic second-order logic with predicates that allow us to say when a set has cardinality $p$ modulo $q$.
It is not too difficult to see that the techniques in \cite{MR3803151} allow us to prove Conjecture \ref{conj_infinitegroup} when $H$ contains elements of arbitrarily high order.
Thus the conjecture is unresolved only for infinite groups with finite \emph{exponent} (the least common multiple of all the orders of elements).
As an example of such a group, we could take a direct product of infinitely many copies of a finite group.
A \emph{Tarski monster} group would give a more sophisticated example.

In Section \ref{prelims} we introduce essential matroid ideas.
Section \ref{logic} is dedicated to introducing \mso, and discussing its power to define matroidal concepts.
In Section \ref{3-connected-case} we prove Theorem \ref{thm_main} for $3$\dash connected matroids.
Finally, in Section \ref{Reduceto3}, we show that it is possible to make statements in \mso\ about the $3$\dash connected components of a matroid.
This leads to the conclusion that a characterisation of the $3$\dash connected matroids in a class gives a full characterisation of the class, as long as it is closed under $2$\dash sums (Corollary \ref{definedviacomponents}).
Unfortunately the class of bicircular matroids is not closed under $2$\dash sums, so we must do additional work to analyse the decomposition of bicircular matroids into $3$\dash connected components (Theorem \ref{treebeard}).

\section{Preliminaries}
\label{prelims}

We say \emph{set-system} to mean what is sometimes called a hypergraph: a pair $(E,\mathcal{I})$ where $E$ is a finite set and $\mathcal{I}$ is a collection of subsets of $E$.
We refer to $E$ as the \emph{ground set} of the set-system.
Let $M$ be a matroid.
We say that the flat $Z$ is \emph{cyclic} if there are no coloops in the restriction $M|Z$.
Let $e$ and $f$ be elements of $E(M)$.
Then $e$ and $f$ are \emph{clones} if every cyclic flat of $M$ that contains one of $e$ and $f$ contains both. 
A \emph{clonal class} is a maximal set, $C$, such that every pair of distinct elements in $C$ is a clonal pair.

Let $M$ be a matroid, and let $A$ be a subset of the ground set.
Let $B$ be the complement of $A$.
Then $\lambda(A)$ is defined to be $r(A)+r(B)-r(M)$, which is equal to $r(A)+r^{*}(A)-|A|$.
If $\lambda(A)<k$, then $A$ is \emph{$k$\dash separating}.
A subset of $E$ is a \emph{separator} if it is $1$\dash separating.
A singleton set $\{e\}$ is a separator if and only if $e$ is a loop or a coloop.
In this case, we say that $e$ itself is a separator.
A minimal non-empty separator is a \emph{component} of $M$.
A set $C$ is a component if and only if it is maximal with respect to every pair of distinct elements in $C$ being contained in a circuit of $M|C$.
When $C$ is a component, we blur the distinction between the set $C$ and the restriction $M|C$.
If $A$ is $k$\dash separating and $|A|,|B|\geq k$, then the partition $(A,B)$ is a \emph{$k$\dash separation}.
A matroid is \emph{$n$\dash connected} if it has no $k$\dash separation such that $k<n$.
A $2$\dash connected matroid is said to be \emph{connected}.
Thus a matroid fails to be connected if and only if it has a non-empty separator that is not equal to the entire ground set.
Let $C^{*}$ be a cocircuit of the connected matroid $M$.
We say $C^{*}$ is \emph{non-separating} if $M\backslash C^{*}$ is a connected matroid.

Let $M_{1}$ and $M_{2}$ be matroids on the ground sets $E_{1}$ and $E_{2}$, respectively.
Assume that $E_{1}\cap E_{2}=\{e\}$, where $e$ is not a separator in either matroid.
Then $M_{1}\oplus_{2} M_{2}$ denotes the \emph{$2$\dash sum} of $M_{1}$ and $M_{2}$ along the \emph{basepoint} $e$.
Its ground set is $(E_{1}\cup E_{2})-e$, and its circuits are the circuits of $M_{1}\ba e$ and $M_{2}\ba e$, along with all sets of the form $(C_{1}-e)\cup (C_{2}-e)$, where $C_{i}$ is a circuit of $M_{i}$ containing $e$, for $i=1,2$.
A connected matroid has a $2$\dash separation if and only if it can be expressed as the $2$\dash sum of two matroids, each with at least three elements \cite{MR2849819}*{Theorem 8.3.1}.
The $2$\dash sum behaves well with respect to duality: $(M_{1}\oplus_{2} M_{2})^{*} = M_{1}^{*}\oplus_{2} M_{2}^{*}$.

Let $M = B(G)$ be a connected bicircular matroid.
If $e$ is an edge of the graph $G$, then $B(G)\ba e = B(G\ba e)$.
The rank function of $M$ is given by $r(X) = |V(X)|-a(X)$, where $a(X)$ is the number of acyclic components of $G[X]$. 
Hence the cocircuits of $M$ are exactly the complements of spanning trees, along with the sets that can be expressed as a disjoint union $A\cup B$, where $B$ is a bond of $G$, and $A$ is the complement of a spanning tree in one of the two components of $G\ba B$.

Let $v$ be a vertex in a graph $G$.
The \emph{vertex star} at $v$, denoted $\st(v)$, is the set of edges incident with $v$.

\begin{proposition} 
\label{stars_are_cocircuits} 
Let $G$ be a $2$\dash connected graph, and let $M$ be $B(G)$. 
Then $\st(v)$ is a cocircuit of $M$ if and only if $G-v$ contains a cycle.
\end{proposition}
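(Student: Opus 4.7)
The plan is to argue directly from the rank function $r(X) = |V(X)| - a(X)$ for $B(G)$ recorded just before the proposition. Since $G$ is $2$\dash connected, $G-v$ is connected and $G$ itself contains a cycle, so $r(M) = |V|$.

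The first step is to observe that $\st(v)$ is always dependent in $M^{*}$. Since $E - \st(v) = E(G-v)$ contains no edge incident to $v$, we have $V(E - \st(v)) \subseteq V - \{v\}$, and therefore $r(E - \st(v)) \leq |V| - 1 < r(M)$. In particular $E - \st(v)$ is not spanning in $M$, confirming that $\st(v)$ is dependent in $M^{*}$.

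The second step is to check when $\st(v)$ is \emph{minimally} dependent in $M^{*}$. For any $e \in \st(v)$, the set $\st(v) - e$ is independent in $M^{*}$ if and only if $(E - \st(v)) \cup \{e\}$ spans $M$. The underlying subgraph is $G - v$ with a single pendant edge $e$ attached at $v$; its vertex set is all of $V$, it is connected, and since a pendant edge cannot lie in any cycle, it contains a cycle if and only if $G - v$ does. The rank formula then forces $r((E - \st(v)) \cup \{e\}) = |V|$ exactly when $G - v$ contains a cycle. Note that this criterion does not depend on the choice of $e$: either every $\st(v) - e$ is independent in $M^{*}$, or none is.

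Combining the two steps yields the claimed equivalence: $\st(v)$ is a cocircuit of $M$ precisely when $G - v$ contains a cycle. The argument is essentially a direct computation with the rank formula, and I do not anticipate a real obstacle; the only conceptual content is the observation that any single edge of $\st(v)$, when adjoined to $E - \st(v)$, acts as a pendant on $G - v$, bringing $v$ back into the vertex set without creating any new cycle.
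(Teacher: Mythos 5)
Your argument is essentially the paper's: both proofs come down to the rank formula $r(X)=|V(X)|-a(X)$, showing that $E-\st(v)$ is never spanning and that $(E-\st(v))\cup\{e\}$ is spanning precisely when $G-v$ contains a cycle. The one step you need to repair is the case where $e$ is a loop at $v$ (loops are genuinely in play throughout this paper, and $\st(v)$ contains them). For such an $e$, the graph $G[(E-\st(v))\cup\{e\}]$ is \emph{not} ``$G-v$ with a pendant edge'': it is the disjoint union of $G-v$ and the isolated vertex $v$ carrying the loop, so it is disconnected, and it \emph{does} contain a cycle even when $G-v$ is acyclic. Hence both of your intermediate claims (``it is connected'' and ``it contains a cycle if and only if $G-v$ does'') fail for loop edges. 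The conclusion you need nevertheless survives: the looped vertex forms a component that is not acyclic, so $a\bigl((E-\st(v))\cup\{e\}\bigr)$ is $0$ or $1$ according to whether $G-v$ has a cycle, and the rank is $|V|$ exactly when $G-v$ has a cycle, just as in the link case. Since the forward direction of the equivalence requires \emph{every} set $\st(v)-e$ to be independent in $M^{*}$, the loop case cannot simply be ignored and should be treated separately (the paper's own proof sidesteps it by arguing about the whole star at once in one direction and by explicitly choosing a non-loop edge in the other). With that one-line patch your proof is complete and matches the paper's in substance.
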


\begin{proof}
Note that $G$ must contain a cycle.
Since $G-v$ is connected, as long as it contains a cycle, the star at $v$ is a minimal set of edges whose deletion from $G$ increases the number of acyclic components by one, namely by creating the acyclic component consisting of the isolated vertex $v$.
Therefore $\st(v)$ is a cocircuit.

For the converse, assume that $G-v$ is acyclic.
We let $e$ be a non-loop edge incident with $v$.
Then deleting $\st(v)-e$ from $G$ produces a graph that is acyclic, and therefore has exactly one more acyclic component than $G$.
Thus $\st(v)$ properly contains a cocircuit, and is itself not a cocircuit.
\end{proof}

In a graph, a \emph{leaf} is a degree-one vertex, and a \emph{pendent edge} is an edge that is incident with a leaf.
The next result follows from \cite{MR1120878}.

\begin{proposition}
\label{conn} 
Let $G$ be a connected graph with at least three vertices. 
\begin{enumerate}[label=\textup{(\roman*)}]
    \item $B(G)$ is connected if and only if $G$ is not a cycle and has no pendent edge. 
    \item $B(G)$ is $3$\dash connected if and only if $G$ has minimum degree at least three, no cut vertex, and no vertex incident with more than one loop.
\end{enumerate}
\end{proposition}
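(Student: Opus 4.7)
The plan is to translate the connectivity of $B(G)$ into graph-theoretic terms via the rank formula $r(X) = |V(X)| - a(X)$ noted earlier. Assuming $G$ has no isolated vertices and $G$ contains a cycle (so that $a(E) = 0$), the connectivity function for any partition $(A, B)$ of $E(G)$ simplifies to
\[
\lambda(A) = |V(A) \cap V(B)| - a(A) - a(B),
\]
using $|V(A)| + |V(B)| - |V(G)| = |V(A) \cap V(B)|$. Both parts then follow by reading off graph-theoretic features from low values of $\lambda$.

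For part (i), the forward direction is immediate. If $G$ is a cycle then $G$ has no bicycles, so every element of $B(G)$ is a coloop. If $G$ has a pendent edge $e$ at a leaf $v$, then any connected subgraph with at least two cycles containing $e$ retains its cycles after the deletion of $e$ (since $v$ meets no other edge), violating minimality; hence $e$ lies in no bicycle and is a coloop. For the reverse direction, assume $G$ has minimum degree at least $2$ and is not a cycle; then $G$ has cyclomatic number at least $2$ and in particular contains two distinct cycles. I would show that any pair of edges $e, f$ lies in a common bicycle by exhibiting a theta subgraph or handcuff containing both, using that every edge lies in some cycle and that any two cycles can be linked by a path in $G$.

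For part (ii) forward, I would exhibit a $2$\dash separation in each of the three bad cases. A vertex of degree at most $2$ yields either a coloop (if degree $1$) or, via Proposition \ref{stars_are_cocircuits}, a $2$\dash element cocircuit $\st(v)$; a cut vertex $v$ gives a partition $(A, B)$ with $|V(A) \cap V(B)| = 1$ and hence $\lambda(A) \leq 1$; and two loops at a single vertex form a tight handcuff, hence a $2$\dash circuit, making the loops parallel in $B(G)$ (the hypothesis $|V(G)| \geq 3$ forces $|E(G)| \geq 4$, so a genuine $2$\dash separation exists). For the reverse direction, assume $G$ has minimum degree at least $3$, no cut vertex, and at most one loop per vertex, and suppose for contradiction that $(A, B)$ is a $2$\dash separation with $|A|, |B| \geq 2$. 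The $2$\dash connectedness of $G$ forces $|V(A) \cap V(B)| \geq 2$, so $\lambda(A) \leq 1$ implies that at least one of $G[A], G[B]$ has an acyclic component $T$. Vertex and edge counting on $T$, combined with the minimum-degree and no-cut-vertex hypotheses, should then force either a cut vertex or a vertex of degree at most $2$ in $G$, a contradiction.

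The delicate step is this last extraction of a forbidden graph feature from a supposed $2$\dash separation. The plan is to focus on a leaf of the acyclic component $T$, whose degree in $G$ must be absorbed by $V(A) \cap V(B)$ once we take the inequality $\lambda(A) \leq 1$ and the submodularity of $\lambda$ into account; iteratively pruning $T$ toward its interface with the opposite side of the separation should pin down a vertex whose degree in $G$ is at most $2$, or else a vertex through which every path between $V(A) \setminus V(B)$ and $V(B) \setminus V(A)$ passes. This programme is essentially the analysis carried out in \cite{MR1120878}, which can be invoked for the full details.
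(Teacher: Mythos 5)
The paper offers no proof of this proposition: it is quoted directly from Coullard, del Greco, and Wagner \cite{MR1120878}, so there is nothing of the authors' to compare your argument against line by line, and your eventual deferral to that reference matches what the paper itself does. Your overall strategy---rewriting $\lambda(A)$ as $|V(A)\cap V(B)|-a(A)-a(B)$ via the rank formula and reading off graph structure from $\lambda(A)\leq 1$---is the right one, and is essentially the computation the paper does carry out later for Proposition \ref{loopseparation}.

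However, two steps in your sketch are wrong as stated. First, in part (i) you justify the common-bicycle construction ``using that every edge lies in some cycle''; this is false under minimum degree two (join two triangles by a path: the path edges are bridges). The conclusion survives because a bridge can lie on the connecting path of a loose handcuff, but the justification must be rewritten around that. Second, and more seriously, in the reverse direction of (ii) you assert that the absence of a cut vertex forces $|V(A)\cap V(B)|\geq 2$. This fails precisely when one side of the partition is a set of loops at a single vertex $v$: then $V(A)\cap V(B)=\{v\}$ yet $v$ need not be a cut vertex. That is exactly where the hypothesis ``no vertex incident with more than one loop'' must enter, and your reverse argument never uses it---as written it would ``prove'' that minimum degree three and the absence of a cut vertex alone give $3$\dash connectivity of $B(G)$, which is false, since two loops at a common vertex form a tight handcuff and hence a parallel pair in $B(G)$. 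You need to treat the case $|V(A)\cap V(B)|=1$ separately (no cut vertex forces one side to consist entirely of loops at that vertex, and the loop hypothesis then makes that side a singleton); in the remaining case $|V(A)\cap V(B)|\geq 2$ your leaf-counting does close: every acyclic component of $G[A]$ or $G[B]$ contributes at least two leaves, each of degree at least three in $G$ and hence lying in $V(A)\cap V(B)$, and the borderline configuration where $G[A]$ is a path is killed because its internal vertices would have degree two in $G$.
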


\begin{proposition}
\label{prop_onecomp}
Let $G$ be a connected graph, and let $M$ be $B(G)$.
Let $C^{*}$ be a cocircuit of $M$.
Then $M\ba C^{*}$ has at most one component with size greater than one.
\end{proposition}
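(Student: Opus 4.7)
My plan is to split the proof into two cases following the explicit characterisation of cocircuits of $B(G)$ recalled just before the statement. If $C^*$ is the complement of a spanning tree $T$ of $G$, then matroid deletion agrees with graph deletion, so $M\ba C^* = B(T)$. Since $T$ is acyclic it admits no bicycles, hence every element of $B(T)$ is a coloop and every component of $M\ba C^*$ is a singleton; the conclusion then holds trivially.

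In the remaining case, $C^* = A\cup B$ where $B$ is a bond with $G\ba B$ decomposing into two components $G_1, G_2$, and $A\subseteq E(G_1)$ is the complement of a spanning tree $T_1$ of $G_1$. Then $G\ba C^* = T_1\cup G_2$ with no edges between the two parts, so from the rank formula $r(X) = |V(X)|-a(X)$ one checks that $B(T_1\cup G_2) = B(T_1)\oplus B(G_2)$. The summand $B(T_1)$ again contributes only coloops, so the task reduces to showing that $B(G_2)$ has at most one component of size greater than one.

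The heart of the argument, and what I expect to be the main obstacle, is therefore the following separate lemma: if $H$ is any connected graph, then $B(H)$ has at most one component of size greater than one. I would argue this by contradiction. Suppose $D_1$ and $D_2$ are two distinct components of $B(H)$, each of size at least two. Then each $D_i$ contains some circuit of $B(H)$, that is, a bicycle $X_i$ of $H$, and $X_1, X_2$ are edge-disjoint. Let $C_i$ be any cycle of $H$ with $E(C_i)\subseteq X_i$. If $C_1$ and $C_2$ are vertex-disjoint, a minimal path $P$ of $H$ from $V(C_1)$ to $V(C_2)$ makes $C_1\cup P\cup C_2$ into a loose handcuff, which is a bicycle containing edges of both $X_1$ and $X_2$. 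Otherwise $C_1$ and $C_2$ share a vertex, so $C_1\cup C_2$ is a connected subgraph with at least two cycles and thus contains some bicycle $Y$; since each $C_i$ alone contains only one cycle, $Y$ must use at least one edge from each. Either way, some edge of $X_1$ and some edge of $X_2$ lie in a common circuit of $B(H)$, contradicting the assumption that $D_1$ and $D_2$ are distinct components.
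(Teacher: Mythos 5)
Your proposal is correct and follows essentially the same route as the paper: both reduce to the component of $G\ba B$ untouched by $A$, suppose two non-singleton components of $M\ba C^{*}$, and exhibit a single bicycle (a handcuff or theta) meeting circuits from both, contradicting their being distinct components. Your version is slightly more explicit in treating the spanning-tree-complement cocircuits and the case where the two chosen cycles share a vertex, but the substance is identical.
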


\begin{proof}
Express $C^{*}$ as the disjoint union $A\cup B$, where $B$ is a bond of $G$.
Let $G_{1}$ and $G_{2}$ be the two components of $G\ba B$, and assume that $A$ is the complement $E(G_{2})-E(T_{2})$, where $T_{2}$ is a spanning tree of $G_{2}$.
Therefore every edge in $E(G_{2})-A$ is a coloop of $M\ba C^{*}$.
Now we see that any connected component of $M\ba C^{*}$ that contains more than one element must be contained in $E(G_{1})$.
Assume that $N_{1}$ and $N_{2}$ are two distinct components of $M\ba C^{*}$ and that $|N_{1}|,|N_{2}|>1$.
For $i=1,2$, let $C_{i}$ be a circuit of $M\ba C^{*}$ contained in $N_{1}$.
Then $G_{1}[C_{i}]$ is a connected subgraph of $G_{1}$ that contains at least two cycles.
As $G_{1}$ is connected, we can let $P$ be a minimal path that joins a vertex in $G_{1}[C_{1}]$ to a vertex in $G_{1}[C_{2}]$.
Now it is easy to see that there is a circuit of $M\ba C^{*}$ containing the edges of $P$, as well as cycles from both $G_{1}[C_{1}]$ and $G_{1}[C_{2}]$.
But this circuit is not contained in a connected component of $M\ba C^{*}$, and we have a contradiction.
\end{proof}

It is easy to see that if $e$ and $f$ are parallel edges in a graph $G$, then $e$ and $f$ are clones in the bicircular matroid $B(G)$.
The converse is almost always true, as the next result shows.
It follows with a small amount of analysis from \cite{MR3442541}*{Theorem 2.5}.

\begin{proposition}
\label{prop_bicircularclones}
Let $G$ be a graph with at least five vertices such that $M=B(G)$ is $3$\dash connected.
Assume that $G$ has no vertex whose deletion leaves at most one cycle.
If $e$ and $f$ are clones in $M$, then they are parallel edges in $G$.
\end{proposition}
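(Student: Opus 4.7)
The plan is to deduce this proposition from Theorem 2.5 of \cite{MR3442541}, which classifies clonal pairs in bicircular matroids, by ruling out each non-parallel configuration using the extra hypotheses. First I would record what the hypotheses buy us. By Proposition \ref{conn}, $G$ has minimum degree at least three, no cut vertex, and no vertex incident with more than one loop. Since the hypothesis that no vertex deletion leaves at most one cycle in particular implies that $G-v$ contains a cycle for every $v$, Proposition \ref{stars_are_cocircuits} gives that the vertex star $\st(v)$ is a cocircuit of $M$ for every vertex $v$. One can also observe that, since a bicycle has at least two cycles and by the no-multi-loop condition no two cycles fit on a single edge, every circuit of $M$ has size at least three, so $M$ has neither loops nor matroid-parallel pairs.

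With this in place, I would invoke Theorem 2.5 of \cite{MR3442541}. This should leave a short list of possible configurations for a clonal pair $\{e,f\}$ in $B(G)$: the desired case where $e$ and $f$ are non-loop edges of $G$ sharing both endpoints, together with a handful of exceptional families. Broadly, these exceptions are of two kinds: (i) configurations in which at least one of $e,f$ is a loop of $G$, and (ii) configurations that force an underlying graph with only a few vertices or cycles concentrated at a single vertex. For (ii), the assumption $|V(G)|\ge 5$ together with minimum degree at least three and $2$-vertex-connectivity should be enough to exclude the offending graphs directly. For (i), the plan is to apply the automorphism $\sigma=(e\ f)$ to the cocircuits $\st(v)$: if $e$ is a loop at a vertex $u$ and $f$ is not parallel to $e$ in $G$, then $\sigma(\st(u))$ must again be a cocircuit of the form given in Section \ref{prelims}, and a short analysis using that $G-u$ contains at least two cycles will show that no such set is a cocircuit. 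The no-multi-loop condition then finishes off the case where $f$ is forced to be a loop at $u$.

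The main obstacle I expect is the loop case, precisely because a loop of $G$ is a rank-one element of $M$ rather than a matroid loop, and two loops at different vertices have very similar local matroid behaviour. The hypothesis that $G-v$ has at least two cycles for \emph{every} $v$ is exactly what is needed here: it ensures that around any candidate clonal loop there are enough distinct bicycles of $G$ to witness the asymmetry between the two loop positions, and in particular to produce a cocircuit obstruction to $\sigma$ being an automorphism. Once all non-parallel configurations are excluded, we are left with $e$ and $f$ being parallel non-loop edges, as required.
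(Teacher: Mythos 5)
Your proposal matches the paper's treatment of this statement: the paper offers no self-contained proof of Proposition~\ref{prop_bicircularclones}, saying only that it ``follows with a small amount of analysis'' from Theorem~2.5 of \cite{MR3442541}, which is precisely the route you describe (invoke that classification of clones, then use the minimum-degree, $2$\dash connectivity, and two-cycles-after-any-vertex-deletion hypotheses to exclude the exceptional configurations). The one caveat is that neither you nor the paper writes out the actual case analysis against the statement of that theorem, so the details of your exclusion argument cannot be checked against anything in the source; but as an approach it is essentially identical.
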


\begin{proposition}
\label{stars} 
Let $G$ be a connected graph such that $M=B(G)$ is connected.
Any non-separating cocircuit of $M$ is a vertex star of $G$.
\end{proposition}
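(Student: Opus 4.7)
The plan is to read the given non-separating cocircuit through the cocircuit description of $B(G)$ recalled in Section~\ref{prelims}: any cocircuit is either the complement of a spanning tree of $G$, or has the form $A\cup B$ with $B$ a bond of $G$ and $A$ the complement of a spanning tree $T'$ in one of the two components of $G\ba B$. The non-separability condition will be used to rule out the first form, and to force $T'$ to be empty in the second form, at which point the unique vertex of the single-vertex component of $G\ba B$ supplies the star.

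I first dispose of the spanning-tree case. If $C^{*}=E(G)-T$ for a spanning tree $T$, then $T$ is acyclic, so every subset of $T$ is independent in $B(G)$, making $M\ba C^{*}=M|T$ a free matroid on $|T|=|V(G)|-1$ elements. A free matroid is disconnected once it has at least two elements, so non-separability would force $|V(G)|\leq 2$, which is incompatible with $M$ being connected by Proposition~\ref{conn}.

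Turning to the second form, let $G_{1}$ be the other component of $G\ba B$ and write $G_{2}=G'$. Because $G_{1}$ and $G_{2}$ are vertex-disjoint and $T'\subseteq G_{2}$, the subgraph $G\ba C^{*}$ is the vertex-disjoint union $G_{1}\sqcup T'$, and therefore $M\ba C^{*}=B(G_{1})\oplus B(T')$. As in the proof of Proposition~\ref{prop_onecomp}, each edge of the forest $T'$ is a coloop of $M\ba C^{*}$. A connected matroid with at least two elements has no coloops, so $T'$ must be empty, forcing $G'$ to consist of a single vertex $v$. Then $A=E(G')$ is precisely the set of loops of $G$ at $v$, and $B$, being the bond separating $\{v\}$ from $G-v$, is the set of non-loop edges incident with $v$; their disjoint union is $\st(v)$.

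The main obstacle I anticipate is the small-case bookkeeping: making sure the loops at the single vertex $v$ are absorbed correctly into $A$ rather than left out, and ruling out the loophole in which $M\ba C^{*}$ consists of a single coloop. Both should be handled using Proposition~\ref{conn} together with the observation that $M$ connected implies $G$ contains a cycle, so that $M$ has more than one element and the ``lone coloop'' case cannot arise.
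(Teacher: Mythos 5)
Your route is the same as the paper's: read $C^{*}$ through the two-case description of cocircuits of $B(G)$, use connectivity of $M\ba C^{*}$ to eliminate the coloops coming from the tree $T'$, and conclude $C^{*}=\st(v)$. You are in fact more explicit than the paper about the complement-of-a-spanning-tree case, which the paper's proof passes over. However, both of your edge-case dismissals are faulty. First, Proposition \ref{conn} is stated only for graphs with at least three vertices, so it cannot be used to rule out $|V(G)|=2$; and that case is not vacuous. If $G$ consists of two vertices joined by three parallel edges, then $B(G)=U_{2,3}$ is connected, and the complement of any spanning tree is a two-element cocircuit whose deletion leaves a single coloop --- a connected matroid by the paper's definitions --- so it is a non-separating cocircuit that is not a vertex star. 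The statement as written therefore fails for two-vertex graphs (a defect the paper's own proof also does not address; the proposition is only ever invoked later for graphs with at least four vertices). You should either impose $|V(G)|\geq 3$ or exhibit this counterexample, rather than claim Proposition \ref{conn} disposes of the case.

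Second, your closing remark that ``$M$ has more than one element'' rules out the lone-coloop loophole is a non sequitur: the size of $M$ does not prevent $M\ba C^{*}$ from being a single coloop, and indeed it is one whenever $E(G_{1})$ is a single edge and $T'$ is a single vertex (which is harmless, since $C^{*}$ is then still a star). The genuinely problematic subcase is $E(G_{1})=\emptyset$ with $E(T')=\{f\}$, where your coloop argument yields nothing. The correct way to close it is via the rank function rather than Proposition \ref{conn}: the complement $E(M)-C^{*}$ of a cocircuit is a hyperplane, hence has rank $|V(G)|-1$, whereas when $G_{1}$ is a single vertex and $E(T')\ne\emptyset$ one computes $r(E(T'))=|V(G_{2})|-1=|V(G)|-2$, so no such cocircuit exists. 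With that repair, and with the hypothesis $|V(G)|\geq 3$ added, your argument is complete and essentially coincides with the paper's.
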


\begin{proof}
Then since $M$ is connected, so is $G$.
The case where $G$ has a single vertex is trivial, so we assume that it has more than one.
Let $C^{*}$ be a cocircuit of $M$, so that $C^{*} = A\cup B$, where $B$ is a bond of $G$ and $A$ is the complement of a spanning tree, $T$, in one of the two components in $G\backslash B$.
Assume that $C^{*}$ is non-separating and let $Z=E(M)-C^{*}$ be the complementary hyperplane.
Since $M|Z$ is connected, $G[Z]$ is connected.
This means $T$ contains no edges, and is thus a single vertex, $v$.
Thus $B$ is the bond consisting of non-loop edges incident with $v$, and $A$ is the set of loops incident with $v$.
Hence $C^{*} = \st(v)$.
\end{proof}

\section{Monadic second-order logic}
\label{logic}

In this section we describe monadic second-order logic, as used in \cite{MR3803151}.
We allow ourselves a countably infinite supply of variables, where each variable is to be interpreted as a subset of a ground set.
In addition, we use the predicate symbols $\ind$ and $\subseteq$, the logical connectives $\neg$ and $\land$, and the existential quantifier $\exists$.

We recursively describe the \emph{formulas} of monadic second-order logic.
Any variable that appears in a formula is either \emph{free} or \emph{bound}.
The following are \emph{atomic formulas}: the unary formula $\ind[X]$, which has the single (free) variable $X$, and the binary formula $X\subseteq X'$, which has $X$ and $X'$ as its free variables.
Now we recursively define non-atomic formulas.
If $\varphi$ is a formula, then so is $\neg \varphi$, and these two formulas have the same free variables.
If $X$ is a free variable in the formula $\varphi$, then $\exists X\varphi$ is a formula, and its free variables are exactly those of $\varphi$, except for $X$ (which is a bound variable of $\exists X\varphi$).
Finally, if $\varphi_{1}$ and $\varphi_{2}$ are formulas, and no variable is free in one of $\varphi_{1}$ and $\varphi_{2}$ while being bound in the other, then $\varphi_{1}\land \varphi_{2}$ is a formula.
Since we can always rename bound variables, the constraint on variables imposes no difficulty.
The free variables of $\varphi_{1}\land \varphi_{2}$ are exactly those variables that are free in either $\varphi_{1}$ or $\varphi_{2}$.
This completes the description of \mso\ formulas.
We will use parentheses freely to clarify the construction of formulas.
A formula with no free variables is a \emph{sentence}.

We use the notation $\varphi[X_{i_{1}},\ldots, X_{i_{n}}]$ to indicate that $\varphi$ is an \mso\ formula and that $\{X_{i_{1}},\ldots, X_{i_{n}}\}$ is its set of free variables.
In this case, $\varphi[X_{j_{1}}, \ldots, X_{j_{n}}]$ is the formula obtained from $\varphi[X_{i_{1}},\ldots, X_{i_{n}}]$ by replacing each appearance of $X_{i_{s}}$ in $\varphi$ with $X_{j_{s}}$, for $s=1,\ldots, n$.

Let $\varphi$ be an \mso\ formula and let $M=(E,\mathcal{I})$ be a set-system.
An \emph{interpretation} is a function, $\theta$, from the set of free variables in $\varphi$ to the power set of $E$.
We treat every function as a set of ordered pairs.
We will blur the distinction between a variable and its image under an interpretation when it is convenient to do so.

We are going to recursively define what it means for $(M,\theta)$ to \emph{satisfy} $\varphi$.
If $\varphi$ is the atomic formula $\ind[X]$, then $(M,\theta)$ satisfies $\varphi$ if and only if $\theta(X)\in\mathcal{I}$.
If $\varphi$ is the atomic formula $X\subseteq X'$, then $(M,\theta)$ satisfies $\varphi$ if and only if $\theta(X)\subseteq \theta(X')$.
Now we move to formulas that are not atomic.
Assume that $\varphi=\neg\psi$, for some formula $\psi$.
Then $(M,\theta)$ satisfies $\varphi$ if and only if $(M,\theta)$ does not satisfy $\psi$.
Next let $\varphi$ be $\varphi_{1}\land \varphi_{2}$.
For $i=1,2$, let $\theta_{i}$ be the restriction of $\theta$ to the free variables of $\varphi_{i}$.
Then $(M,\theta)$ satisfies $\varphi$ if and only if $(M,\theta_{1})$ satisfies $\varphi_{1}$ and $(M,\theta_{2})$ satisfies $\varphi_{2}$.
Finally, assume that $\varphi=\exists X\psi$.
Then $(M,\theta)$ satisfies $\varphi$ if and only if there is a subset, $Y\subseteq E$ such that $(M,\theta\cup\{(X,Y)\})$ satisfies $\psi$.

If $X$ is the only free variable in $\varphi$, then we may write $(M,X\mapsto Y)$ to mean $(M,\theta)$ where $\theta = \{(X,Y)\}$.
If $\varphi$ is a sentence, meaning it has no free variables, then we will say that $M$ satisfies $\varphi$ (instead of saying $(M,\emptyset)$ satisfies $\varphi$).
If $\varphi$ is a formula with the free variables $X_{i_{1}},\ldots, X_{i_{n}}$ and $M=(E,\mathcal{I})$ is a set-system, then we may say that the subsets $Y_{1},\ldots, Y_{n}\subseteq E$ \emph{satisfy} $\varphi$ to mean that $\varphi$ is satisfied by $M$ under the interpretation that takes each $X_{i_{j}}$ to $Y_{j}$.

We use various pieces of shorthand notation.
The formula $X=X'$ means $(X\subseteq X')\land (X'\subseteq X)$, and $X\ne X'$ means $\neg(X=X')$. We write $X\nsubseteq X'$ for $\neg(X\subseteq X')$.
If $\varphi_{1}$ and $\varphi_{2}$ are formulas with no variable bound in one and free in the other, then $\varphi_{1}\lor\varphi_{2}$ means $\neg(\neg \varphi_{1} \land \neg \varphi_{2})$, and $\varphi_{1}\to \varphi_{2}$ means $(\neg \varphi_{1})\lor \varphi_{2}$, while $\varphi_{1}\leftrightarrow\varphi_{2}$ means $(\varphi_{1}\to\varphi_{2})\land (\varphi_{2}\to\varphi_{1})$.
If $X$ is a free variable in the formula $\varphi$, then $\forall X\varphi$ is shorthand for $\neg(\exists X (\neg\varphi))$.
We call $\forall$ the universal quantifier.

Let $\varphi[X]$ be an \mso\ formula and let $k$ be a positive integer. 
Then there is an \mso\ formula that is satisfied by $M=(E,\mathcal{I})$ if and only if there exist exactly $k$ distinct subsets, $Y$, such that $(M,X\mapsto Y)$ satisfies $\varphi$.
The proof of this fact is routine, so we merely illustrate it with an explicit example:
If $k=2$, then the desired formula is 
\begin{linenomath*}
\begin{multline*}
\exists X'\exists X''(\varphi[X'] \land\varphi[X''] \land X'\ne X''\land\\
\forall X''' (\varphi[X''']\to (X'''=X'\lor X'''=X''))).
\end{multline*}
\end{linenomath*}
Here $X'$, $X''$, and $X'''$ stand for distinct variables that do not appear in $\varphi$.
A similar construction can be done for other values of $k$.
Henceforth, we use the notation $\exists_{k}X\varphi$ to stand for a formula that is satisfied if and only if there exist exactly $k$ distinct subsets, $Y$, such that $(M,X\mapsto Y)$ satisfies $\varphi$.

Let $\emp[X]$ stand for the formula $\exists_{1} X'(X'\subseteq X)$.
Note that $(M,X\mapsto Y)$ satisfies $\emp$ if and only if $Y$ has exactly one subset; that is, if and only if $Y=\emptyset$.
Now let $\sing[X]$ be the formula $\exists_{2} X'(X'\subseteq X)$.
Then $(M,X\mapsto Y)$ satisfies $\sing$ if and only if $|Y|=1$.

Let $M=(E,\mathcal{I})$ be a set-system.
We let $\formula{Basis}[X, X']$ be the formula \[X\subseteq X'\land\ind[X]\land \forall X''((X\subseteq X''\land X''\subseteq X')\to (X'' = X\lor \neg\ind[X''])).\]
Thus $(M,\theta)$ satisfies $\formula{Basis}[X,X']$ if and only if $\theta(X)$ is maximal with respect to being a subset of $\theta(X')$ and a member of $\mathcal{I}$.

For any positive integer $n$ we let $\formula{n-union}[X_{1},\ldots, X_{n},X]$ be the formula
\[
\forall X'(\sing[X'] \to (X'\subseteq X \leftrightarrow (X'\subseteq X_{1}\lor\cdots\lor X'\subseteq X_{n}))).
\]
Then $(M,\theta)$ satisfies $\formula{n-union}[X_{1},\ldots, X_{n},X]$ if and only if
\[\theta(X) = \theta(X_{1})\cup\cdots\cup \theta(X_{n}).\]
We let $\formula{RelDiff}[X_{1},X_{2},X]$ stand for \[\forall X'(\sing[X']\to (X'\subseteq X \leftrightarrow (X'\subseteq X_{1}\land X'\nsubseteq X_{2}))).\]
Therefore $(M,\theta)$ satisfies $\formula{RelDiff}[X_{1}, X_{2}, X]$ if and only if $\theta(X) = \theta(X_{1})-\theta(X_{2})$.

\subsection*{Matroids in monadic second-order logic.}
Now we state a series of results on the power of monadic second-order logic to express statements about matroids.
The next result is established in \cite{MR3803151}*{Section 2}.

\begin{proposition}
There is an \mso\ sentence, \formula{Matroid}, such that whenever $M=(E,\mathcal{I})$ is a set-system, $M$ satisfies $\formula{Matroid}$ if and only if $M$ is a matroid on the ground set $E$ and $\mathcal{I}$ is its family of independent sets.
\end{proposition}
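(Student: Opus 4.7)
The plan is to construct $\formula{Matroid}$ as the conjunction of three sub-formulas corresponding to (a) $\emptyset \in \mathcal{I}$, (b) $\mathcal{I}$ is closed under taking subsets, and (c) a basis-exchange property stated inside every subset of $E$. The main obstacle is the third axiom: the textbook augmentation axiom compares cardinalities of independent sets, and $\mso$ cannot directly speak about cardinality. I would bypass this by replacing augmentation with the following statement, which is equivalent under (a) and (b): for every $A \subseteq E$, every pair of maximal members $B_{1}, B_{2}$ of $\mathcal{I}$ contained in $A$, and every $x \in B_{1} \setminus B_{2}$, there exists $y \in B_{2} \setminus B_{1}$ such that $(B_{1} \setminus \{x\}) \cup \{y\}$ is again a maximal member of $\mathcal{I}$ contained in $A$.

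The equivalence between (a)--(c) and the standard matroid axioms can be verified by a short swap argument. Under (a)--(c), if $B_{1}$ and $B_{2}$ are two maximal independent subsets of a common $A$, one may iteratively exchange an element of $B_{1} \setminus B_{2}$ for one of $B_{2} \setminus B_{1}$; each swap preserves the maximality and size of the first set while reducing its symmetric difference with $B_{2}$ by two, so after finitely many steps the two sets coincide, forcing $|B_{1}| = |B_{2}|$. The augmentation axiom then follows by applying this uniform-rank property at $A = I_{1} \cup I_{2}$: any extension of $I_{1}$ to a maximal independent subset of $A$ must, by size considerations, strictly contain $I_{1}$ and hence pick up an element of $I_{2} \setminus I_{1}$. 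The converse direction---every matroid satisfies (a)--(c)---is standard.

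The translation of (a)--(c) into $\mso$ is then direct using the building blocks established above. Axiom (a) is captured by $\exists X\,(\emp[X] \land \ind[X])$, and axiom (b) by $\forall X\,\forall Y\,((Y \subseteq X \land \ind[X]) \to \ind[Y])$. For axiom (c), I would write a formula of the shape
\[
\forall A\,\forall B_{1}\,\forall B_{2}\,\forall x\,\bigl(\Phi \to \exists y\,\exists C'\,\exists C\,\Psi\bigr),
\]
where $\Phi$ abbreviates $\formula{Basis}[B_{1},A] \land \formula{Basis}[B_{2},A] \land \sing[x] \land x \subseteq B_{1} \land x \nsubseteq B_{2}$, and $\Psi$ abbreviates $\sing[y] \land y \subseteq B_{2} \land y \nsubseteq B_{1} \land \formula{RelDiff}[B_{1},x,C'] \land \formula{2-union}[C',y,C] \land \formula{Basis}[C,A]$. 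The sentence $\formula{Matroid}$ is the conjunction of these three formulas, and its correctness is exactly the equivalence proved in the previous paragraph.
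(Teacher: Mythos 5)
Your construction is correct, and it is essentially the argument the paper relies on: the paper does not prove this proposition itself but defers to \cite{MR3803151}*{Section 2}, where \formula{Matroid} is built in just this way, replacing the cardinality-based augmentation axiom by a local basis-exchange axiom quantified over all subsets $A$ of the ground set. Your swap argument for the equivalence with the standard independence axioms is sound, and the translation into \mso\ via $\formula{Basis}$, $\formula{RelDiff}$, and $\formula{2-union}$ is exactly what is needed.
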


If \formula{Matroid} is satisfied, then we can certainly say that a set is minimal with respect to being dependent, so we can characterise when a set is a circuit.
We can similarly characterise when a set is a basis.
Since a subset is a cocircuit if and only if it is minimal with respect to having a non-empty intersection with every basis \cite{MR2849819}*{Proposition 2.1.19}, we can characterise cocircuits.
For each matroid $N$, there is a sentence $\formula{Minor}_{N}$ such that $M$ satisfies $\formula{Minor}_{N}$ if and only if $M$ is a matroid with a minor isomorphic to $N$.
Thus a minor-closed class of matroids can be characterised by an \mso\ sentence, as long as that class has finitely many excluded minors.

\begin{proposition}
\label{prop_kseparators}
Let $k$ be a positive integer.
There is an \mso\ formula, $\formula{k-separation}[X]$, such that whenever $M=(E,\mathcal{I})$ is a set-system and $Y\subseteq E$ is a subset, $(M,X\mapsto Y)$ satisfies $\formula{k-separation}$ if and only if $M$ is a matroid and $(Y,E-Y)$ is a $k$\dash separation of $M$.
\end{proposition}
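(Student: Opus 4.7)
The plan is to translate the definition of $k$-separation directly. The statement that $(Y, E-Y)$ is a $k$-separation of a matroid $M$ is equivalent to the conjunction of (a) $M$ is a matroid, (b) $|Y|\geq k$ and $|E-Y|\geq k$, and (c) $\lambda(Y) = r(Y)+r(E-Y)-r(M) \leq k-1$. Clause (a) is handled by $\formula{Matroid}$; for clause (b) I would existentially quantify over $k$ pairwise distinct variables $X_{1},\ldots,X_{k}$, asserting $\sing[X_{i}]\wedge X_{i}\subseteq X$ for each (with the obvious analogue using $X_{i}\nsubseteq X$ for the complementary bound). This is a finite formula for each fixed $k$.

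The only interesting step is clause (c). Since MSO has no primitive arithmetic on ranks, I would recast the inequality as a statement about the cardinality of a single constructible set. The key observation is this: fix a basis $B_{Y}$ of $M|Y$ and a basis $B_{Y^{c}}$ of $M|(E-Y)$. These sets are disjoint, and their union spans $M$, so any maximal independent subset $B\subseteq B_{Y}\cup B_{Y^{c}}$ is a basis of $M$ with $|B|=r(M)$. Hence
\[\lambda(Y) = |B_{Y}|+|B_{Y^{c}}|-|B| = |(B_{Y}\cup B_{Y^{c}})-B|,\]
so $\lambda(Y)\leq k-1$ if and only if there exist bases $B_{Y}$ and $B_{Y^{c}}$ of the two restrictions, together with an independent subset $B$ of their union, for which the difference $(B_{Y}\cup B_{Y^{c}})-B$ contains at most $k-1$ elements. (Any independent $B$ works in the one direction, and a maximal independent $B$ witnesses the other.)

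Each ingredient of this statement is already expressible with the formulas established earlier in the section: the complement $E-X$ is obtained via $\formula{RelDiff}$ from the ground set (itself defined as the unique set containing every singleton); $\formula{Basis}$ captures bases of the restrictions to $X$ and to its complement; $\formula{2-union}$ builds $B_{Y}\cup B_{Y^{c}}$; a further $\formula{RelDiff}$ extracts the difference $D$; independence of $B$ is $\ind[B]$; and the bound $|D|\leq k-1$ is the negation of a formula asserting the existence of $k$ pairwise distinct singleton subsets of $D$. Conjoining everything and existentially quantifying over the auxiliary variables yields $\formula{k-separation}[X]$. The genuine obstacle is the translation of rank arithmetic into pure set counting; once the identity above is in hand, the rest of the construction is a routine application of the compositional apparatus developed in this section.
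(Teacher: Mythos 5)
Your proposal is correct, and the only step requiring care --- the translation of the rank inequality into pure set counting --- is handled by a valid identity: since $B_{Y}$ and $B_{Y^{c}}$ are disjoint and their union spans $M$, a maximal independent subset $B$ of $B_{Y}\cup B_{Y^{c}}$ is a basis of $M$, giving $\lambda(Y)=|(B_{Y}\cup B_{Y^{c}})-B|$; and for an arbitrary independent $B$ contained in the union, $|B|\leq r(M)$ yields the reverse implication, so the purely existential formulation is sound. This differs from the paper's route in two respects. The paper works asymmetrically: it fixes a maximal independent subset $B_{Y}$ of $Y$, extends it to a basis $B$ of $M$, and expresses $\lambda(Y)=i$ by saying that $B-Y$ becomes a maximal independent subset of $E-Y$ after adjoining exactly $i$ further singletons from $E-Y$; it then quantifies universally over $B_{Y}$ and $B$ and takes a disjunction over $i\in\{0,\ldots,k-1\}$. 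Your version is symmetric in $Y$ and $E-Y$, needs only one existential witness rather than a universally quantified family, and avoids the case split over the values of $i$ by bounding the cardinality of a single explicitly constructed difference set. Both arguments are routine applications of the same compositional apparatus ($\formula{Basis}$, $\formula{n-union}$, $\formula{RelDiff}$, counting singletons), so neither buys a real saving in formula complexity, but your identity $\lambda(Y)=|(B_{Y}\cup B_{Y^{c}})-B|$ is arguably the cleaner way to see why the connectivity function is MSO-expressible. One small point of hygiene: when you assert $|Y|\geq k$ you should require the $k$ witnessing singletons to be pairwise distinct as sets (equivalently pairwise disjoint), not merely named by distinct variables; the paper states this explicitly and your wording should too.
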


\begin{proof}
The formula $\formula{k-separation}$ will be a conjunction that has the sentence $\formula{Matroid}$ as one of its terms.
Henceforth we assume that $M$ is a matroid.
Recall that $(Y,E-Y)$ is a $k$\dash separation of $M$ if and only if $|Y|,|E-Y|\geq k$ and $r_{M}(Y)+r_{M}(E-Y)-r(M)<k$.
There is certainly a formula which will be satisfied if and only if $|Y|,|E-Y|\geq k$: the formula asserts that there exist $k$ pairwise-disjoint singleton sets contained in $Y$, 
and another $k$ pairwise-disjoint singleton sets not contained in $Y$.
Therefore we assume that $|Y|,|E-Y|\geq k$.

Now $r_{M}(Y)+r_{M}(E-Y)-r(M) = r_{M}(E-Y)-(r(M) - r_{M}(Y)) = i$ for some $i\in\{0,\ldots, k-1\}$ if and only if for every maximal independent subset, $B_{Y}$, of $Y$, and every basis $B$ that contains $B_{Y}$, we have $r_{M}(E-Y) = |B-Y|+i$.
This is true if and only if for every $B_{Y}$ satisfying $\formula{Basis}[B_{Y},Y]$ and every basis $B \supseteq B_{Y}$, 
there are $i$ pairwise-disjoint singleton sets, $Y_{1},\ldots, Y_{i}\nsubseteq Y$, such that when $Y'$ is equal to $B-Y$ and $Y''$ is equal to the union
\[Y'\cup Y_{1}\cup\cdots\cup Y_{i},\]
then $Y''$ is a maximally independent subset of $E-Y$.
It is clear that there is an \mso\ formula that is satisfied by $Y$ exactly when this condition holds.
The disjunction of these formulas for the $k$ values of $i$ is satisfied if and only if $r_{M}(Y)+r_{M}(E-Y)-r(M)<k$, so we are done. 
\end{proof}

Since a matroid is $n$\dash connected if and only if it has no $k$\dash separation with $k<n$, the next result follows immediately.

\begin{corollary}
Let $n$ be a positive integer.
There is an \mso\ sentence, $\formula{n-connected}$, such that whenever $M=(E,\mathcal{I})$ is a set-system, $M$ satisfies $\formula{n-connected}$ if and only if $M$ is an $n$\dash connected matroid.
\end{corollary}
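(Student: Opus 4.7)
The plan is to reduce the corollary directly to Proposition \ref{prop_kseparators} together with the sentence \formula{Matroid} supplied by the preceding proposition. Unwinding the definition, a set-system $M=(E,\mathcal{I})$ is an $n$\dash connected matroid precisely when (a) it is a matroid, and (b) for every $k\in\{1,\ldots,n-1\}$ there is no subset $Y\subseteq E$ such that $(Y,E-Y)$ is a $k$\dash separation of $M$. Both conditions are already captured by \mso\ formulas: condition (a) by \formula{Matroid}, and each instance of condition (b) by $\neg\exists X\,\formula{k-separation}[X]$.

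Concretely, I would define
\[
\formula{n-connected}\ :=\ \formula{Matroid}\ \land\ \bigwedge_{k=1}^{n-1}\neg\exists X\,\formula{k-separation}[X],
\]
where the finite conjunction is expressed as iterated $\land$ in the usual way. Since $n$ is a fixed positive integer, this is a finite expression built from legal \mso\ operations on the formulas guaranteed by Proposition \ref{prop_kseparators}, and it has no free variables, so it is an \mso\ sentence. By the semantics described in Section \ref{logic}, $M$ satisfies this sentence if and only if $M$ is a matroid and no $k$\dash separation exists for any $k<n$, which by definition is exactly $n$\dash connectedness.

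The only thing to double-check is the boundary case $n=1$: the conjunction over $k$ is then empty, and $\formula{n-connected}$ reduces to \formula{Matroid}, which is consistent with the convention that every matroid is $1$\dash connected. There is no genuine obstacle here; the content of the corollary lies entirely in Proposition \ref{prop_kseparators}, and the present step merely assembles those formulas into a single sentence via a finite conjunction.
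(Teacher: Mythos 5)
Your proposal is correct and is exactly the argument the paper intends: the corollary is stated as following immediately from Proposition \ref{prop_kseparators} via the observation that $n$\dash connectedness means having no $k$\dash separation for $k<n$, which is precisely your finite conjunction of $\formula{Matroid}$ with $\neg\exists X\,\formula{k-separation}[X]$ for $k=1,\ldots,n-1$. Your attention to the empty conjunction when $n=1$ and to retaining the $\formula{Matroid}$ conjunct (so that non-matroid set-systems are excluded) is a sensible bit of care, but there is no substantive difference from the paper's route.
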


Let $\varphi[X]$ be an \mso\ formula and
let $M=(E,\mathcal{I})$ be a set-system.
We define the family $\mathcal{F}_{\varphi}(M)$ to be
\[\{Y\colon Y\subseteq E, (M,X\mapsto Y)\ \text{satisfies}\ \varphi\}.\]

Let $E$ be a set, and let $\mathcal{F}$ be a collection of subsets of $E$.
We say $\mathcal{F}$ is a \emph{graphical family} of $E$ if
$1\leq |\{F\in \mathcal{F}\colon e\in F\}| \leq 2$ holds for every $e\in E$.
In this case, $G(\mathcal{F})$ is the graph with vertex-set $\mathcal{F}$, and $E$ as its edge-set, where an element $e\in E$ is incident with $F\in \mathcal{F}$ if and only if $e$ is in $F$.

\begin{proposition}
Let $\varphi[X]$ be an \mso\ formula and let $k$ be a non-negative integer.
There is an \mso\ formula, $\formula{Graphical}^{k}_{\varphi}[X_{1},\ldots, X_{k}]$, such that whenever $M=(E,\mathcal{I})$ is a set-system and $\theta$ is an interpretation, $(M,\theta)$ satisfies $\formula{Graphical}_{\varphi}^{k}$ if and only if  $\mathcal{F}_{\varphi}(M)\cup\{\theta(X_{1}),\ldots, \theta(X_{k})\}$ is a graphical family of $E$.
\end{proposition}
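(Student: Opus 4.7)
The plan is to express the graphical condition via its two constituent inequalities. Recall that $\mathcal{F}_\varphi(M) \cup \{\theta(X_1), \ldots, \theta(X_k)\}$ is a graphical family precisely when, for each $e \in E$, the number of distinct members of this family that contain $e$ lies in $\{1,2\}$. I will encode membership in this family by a single auxiliary formula, and then quantify over singletons to enforce the upper and lower bounds on this count.

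First I would introduce the auxiliary formula
\[ \psi[Y, X_1, \ldots, X_k] \;:=\; \varphi[Y] \lor (Y = X_1) \lor \cdots \lor (Y = X_k), \]
after renaming bound variables of $\varphi$ so that they do not clash with $X_1, \ldots, X_k$ or with the fresh variable $Y$. Here $\varphi[Y]$ denotes the formula obtained from $\varphi[X]$ by substituting $Y$ for $X$. Under any interpretation $\theta$ of $X_1, \ldots, X_k$, a subset $Y \subseteq E$ satisfies $\psi$ if and only if $Y \in \mathcal{F}_\varphi(M) \cup \{\theta(X_1), \ldots, \theta(X_k)\}$.

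Next I would define $\formula{Graphical}^{k}_{\varphi}[X_1, \ldots, X_k]$ as the conjunction of two subformulas. The lower-bound clause, asserting that every element lies in at least one family member, is
\[ \forall Z \bigl(\sing[Z] \to \exists Y (Z \subseteq Y \land \psi[Y, X_1, \ldots, X_k])\bigr), \]
using the abbreviation $\sing$ to quantify over singletons $\{e\}$. The upper-bound clause asserts that for each singleton $Z$ no three pairwise distinct supersets of $Z$ all satisfy $\psi$:
\begin{multline*}
\forall Z \bigl(\sing[Z] \to \neg \exists Y_1 \exists Y_2 \exists Y_3 (Z \subseteq Y_1 \land Z \subseteq Y_2 \land Z \subseteq Y_3 \\
\land Y_1 \ne Y_2 \land Y_1 \ne Y_3 \land Y_2 \ne Y_3 \\
\land \psi[Y_1, X_1, \ldots, X_k] \land \psi[Y_2, X_1, \ldots, X_k] \land \psi[Y_3, X_1, \ldots, X_k])\bigr).
\end{multline*}
Together the two clauses say exactly that each $e \in E$ lies in between $1$ and $2$ distinct members of the family, which is the graphical condition.

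The construction poses no substantive obstacle: every ingredient (the abbreviation $\sing$, set equality and inequality, existential and universal quantification) is already set up in Section~\ref{logic}, and the only care needed is the routine relabelling of bound variables of $\varphi$ to avoid clashes with the free variables $X_1, \ldots, X_k$ and the auxiliary variables $Y, Y_1, Y_2, Y_3, Z$. The paper has already noted that such renaming is always available.
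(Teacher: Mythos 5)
Your construction is correct and is essentially the paper's own proof: the paper's formula $\formula{Graphical}_{\varphi}^{k}$ likewise quantifies over singletons and tests membership via the disjunction $\varphi[X']\lor X'=X_{1}\lor\cdots\lor X'=X_{k}$ (your $\psi$), merely phrasing the cardinality condition as ``exactly one or exactly two'' using the $\exists_{1}/\exists_{2}$ abbreviations rather than your equivalent ``at least one and not at least three'' pair of clauses.
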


\begin{proof}
We set $\formula{Graphical}_{\varphi}^{k}$ to be
\begin{align*}
&\forall X\ \sing[X]\to\\
&\qquad(\exists_{1} X' (X\subseteq X'\land (\varphi[X']\lor X'=X_{1}\lor\cdots\lor X'=X_{k})))\\
&\qquad\lor (\exists_{2} X'' (X\subseteq X''\land (\varphi[X'']\lor X''=X_{1}\lor\cdots\lor X''=X_{k}))).\qedhere
\end{align*}
\end{proof}

In the following material, if $\varphi[X]$ is an \mso\ formula and $X_{1},\ldots, X_{k}$ are variables, then we use $\formula{Vertex}[X']$ to stand for the formula
\[
\varphi[X']\lor X'=X_{1}\lor\cdots\lor X'=X_{k}.
\]
Assuming that $\formula{Graphical}^{k}_{\varphi}[X_{1},\ldots, X_{k}]$ is satisfied, then $\formula{Vertex}[X']$ will be satisfied exactly when $\theta(X')$ is a member of the graphical family $\mathcal{F} = \mathcal{F}_{\varphi}(M)\cup\{\theta(X_{1}),\ldots, \theta(X_{k})\}$, and hence a vertex in the graph $G(\mathcal{F})$.

\begin{proposition}
Let $\varphi[X]$ be an \mso\ formula, and let $k$ be a non-negative integer.
There is an \mso\ formula, $\formula{Connected}_{\varphi}^{k}[X_{1},\ldots, X_{k}, X']$, such that whenever $M=(E,\mathcal{I})$ is a set-system and $\theta$ is an interpretation, $(M,\theta)$ satisfies $\formula{Connected}_{\varphi}^{k}$ if and only if $\mathcal{F} = \mathcal{F}_{\varphi}(M)\cup\{\theta(X_{1}),\ldots, \theta(X_{k})\}$ is a graphical family of $E$, and the subgraph $G(\mathcal{F})[\theta(X')]$ is connected.
\end{proposition}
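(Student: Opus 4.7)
The plan is to reduce connectivity of $G(\mathcal{F})[\theta(X')]$ to a condition that quantifies only over subsets of $E$, so as to remain within \mso. I would build $\formula{Connected}_{\varphi}^{k}$ as the conjunction of $\formula{Graphical}_{\varphi}^{k}[X_{1},\ldots,X_{k}]$ (supplied by the previous proposition, and needed so that the shorthand $\formula{Vertex}[V]$ has its intended meaning) with a connectivity clause. By construction, $V(\theta(X'))$ consists of exactly the members of $\mathcal{F}$ incident with at least one edge of $\theta(X')$, so the subgraph $G(\mathcal{F})[\theta(X')]$ has no isolated vertices. A standard equivalence --- via the vertex bipartition ``touches $S_{1}$'' versus ``touches $\theta(X') \setminus S_{1}$'' --- then says that a graph $H$ with edge set $S$ and no isolated vertices is connected if and only if for every $S_{1}$ with $\emptyset \ne S_{1} \subsetneq S$, some vertex of $H$ is incident both with an edge in $S_{1}$ and with an edge in $S \setminus S_{1}$. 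The degenerate cases $|\theta(X')| \leq 1$ admit no such $S_{1}$, so the condition is vacuously satisfied, in agreement with the convention that the empty graph and single-edge graphs are connected.

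Second, I would render this characterisation in \mso. The universal quantifier over $S_{1}$ is a second-order quantifier over subsets of $E$. The witnessing vertex becomes an existential quantifier guarded by $\formula{Vertex}[V]$, and ``$V$ is incident with an edge in $S_{1}$'' unfolds to the existence of a singleton $e_{1} \subseteq V$ with $e_{1} \subseteq S_{1}$; similarly for $\theta(X') \setminus S_{1}$ with a witness $e_{2}$. Putting this together, $\formula{Connected}_{\varphi}^{k}[X_{1},\ldots,X_{k},X']$ is
\begin{align*}
\formula{Graphical}^{k}_{\varphi}[X_{1},\ldots,X_{k}] \;\land\; \forall S_{1}\Big( & \big(S_{1} \subseteq X' \land \neg\emp[S_{1}] \land S_{1} \ne X'\big) \to \\
& \exists V\,\exists e_{1}\,\exists e_{2}\,\big(\formula{Vertex}[V] \land \sing[e_{1}] \land \sing[e_{2}] \\
& \land e_{1} \subseteq V \land e_{2} \subseteq V \\
& \land e_{1} \subseteq S_{1} \land e_{2} \subseteq X' \land \neg(e_{2} \subseteq S_{1})\big)\Big).
\end{align*}

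The main obstacle is conceptual rather than technical: \mso\ cannot quantify directly over sets of vertices, since in our encoding the vertices are themselves subsets of $E$ and the power set of $\mathcal{F}$ is not addressable. The edge-side reformulation of connectivity, valid precisely because $G(\mathcal{F})[\theta(X')]$ has no isolated vertices, is the single nontrivial idea; the rest of the assembly is routine use of the shorthand already developed in this section.
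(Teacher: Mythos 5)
Your proposal is correct and follows essentially the same route as the paper: conjoin $\formula{Graphical}^{k}_{\varphi}$ with the statement that every partition of $X'$ into two non-empty blocks (equivalently, every non-empty proper subset $S_{1}\subseteq X'$ and its complement) is crossed by some set satisfying $\formula{Vertex}$. The only difference is that you spell out the \mso\ rendering explicitly and justify the edge-partition characterisation of connectivity via the absence of isolated vertices, both of which the paper leaves implicit.
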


\begin{proof}
We set $\formula{Connected}_{\varphi}^{k}$ to be a conjunction, with one of its terms equal to $\formula{Graphical}_{\varphi}^{k}[X_{1},\ldots, X_{k}]$.
So we henceforth assume that $\mathcal{F} = \mathcal{F}_{\varphi}(M)\cup\{\theta(X_{1}),\ldots, \theta(X_{k})\}$ is a graphical family.
The formula $\formula{Connected}_{\varphi}^{k}$ furthermore states that for every partition of $X'$ into two non-empty blocks, there exists $X''$ such that $\formula{Vertex}[X'']$ holds, and $X''$ contains at least one element from each block of the partition.
This condition can clearly be expressed in a \mso\ formula.
Then $(M,\theta)$ satisfies $\formula{Connected}_{\varphi}^{k}$ if and only if: for every partition of the edge subset $\theta(X')$ into two non-empty parts, there is a vertex of the graph $G(\mathcal{F})$ that is incident with at least one edge in both sides of the partition.
Clearly this condition is true if and only if $G(\mathcal{F})[\theta(X')]$ is connected.
\end{proof}

\begin{proposition}
Let $\varphi[X]$ be an \mso\ formula, and let $k$ be a non-negative integer.
There is an \mso\ formula, $\formula{Cycle}_{\varphi}^{k}[X_{1},\ldots, X_{k},X']$, such that whenever $M=(E,\mathcal{I})$ is a set-system and $\theta$ is an interpretation, $(M,\theta)$ satisfies $\formula{Cycle}_{\varphi}^{k}$ if and only if $\mathcal{F}=\mathcal{F}_{\varphi}(M)\cup\{\theta(X_{1}),\ldots, \theta(X_{k})\}$ is a graphical family of $E$, and the subgraph $G(\mathcal{F})[\theta(X')]$ is a cycle.
\end{proposition}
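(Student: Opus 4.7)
Plan: I would define $\formula{Cycle}_{\varphi}^{k}$ as a conjunction of four terms. The first three are $\formula{Graphical}_{\varphi}^{k}[X_{1},\ldots,X_{k}]$, $\formula{Connected}_{\varphi}^{k}[X_{1},\ldots,X_{k},X']$, and $\neg\emp[X']$, which respectively enforce that $\mathcal{F}$ is a graphical family, that $G(\mathcal{F})[\theta(X')]$ is connected, and that the subgraph has at least one edge. The fourth term expresses $2$-regularity of the induced subgraph. This is sufficient because a non-empty connected multigraph (with loops allowed) is a cycle if and only if every vertex has degree exactly $2$, where a loop at $v$ contributes $2$ to the degree of $v$.

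To express $2$-regularity, I would first introduce two auxiliary sub-formulas distinguishing loops from non-loop edges. Because the graphical family condition forces each singleton to lie in either one or two vertices of $\mathcal{F}$, I let $\formula{Loop}[Y]$ be $\sing[Y]\land \exists_{1}X'''(\formula{Vertex}[X''']\land Y\subseteq X''')$ and $\formula{NonLoop}[Y]$ be $\sing[Y]\land \exists_{2}X'''(\formula{Vertex}[X''']\land Y\subseteq X''')$. The $2$-regularity term is then $\forall X''(\formula{Vertex}[X'']\to D[X''])$, where $D[X'']$ is the disjunction of three cases about singletons $Y$ satisfying $Y\subseteq X''\land Y\subseteq X'$: (a) there is no such $Y$ (degree $0$ at $X''$); (b) there is exactly one such $Y$ and it satisfies $\formula{Loop}[Y]$ (degree $2$ from a single loop); or (c) there are exactly two such $Y$ and each satisfies $\formula{NonLoop}[Y]$ (degree $2$ from two non-loop incidences). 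Each disjunct is directly expressible using $\exists_{k}$, $\sing$, $\formula{Vertex}$, $\formula{Loop}$, and $\formula{NonLoop}$.

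Correctness is then immediate from the definitions: under the graphical hypothesis, the vertex set of $G(\mathcal{F})[\theta(X')]$ consists of exactly those $F\in\mathcal{F}$ meeting $\theta(X')$, and the disjunction (a)--(c) precisely characterises those $F$ whose degree in the subgraph is $0$ or $2$. Combined with non-emptiness and connectedness, this gives the desired cycle characterisation. The only real subtlety — and the reason one cannot simply demand that every vertex share exactly two singletons with $X'$ — is that a loop contributes $2$ to the degree of its unique endpoint, so a naive count would wrongly exclude single-loop cycles and wrongly admit configurations such as a loop together with a single non-loop incidence. The $\formula{Loop}$/$\formula{NonLoop}$ distinction is exactly what circumvents this obstacle, after which the construction is routine.
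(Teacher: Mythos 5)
Your construction is correct and is essentially the same as the paper's: both take the conjunction of the graphical and connectedness conditions with a degree condition that treats loops separately, the only difference being that the paper packages the loop case as a global disjunct ($\theta(X')$ is a single loop edge, or else it is loop-free and every vertex meets zero or two edges of $X'$) whereas you fold it into a per-vertex case analysis. The two formulations define the same class of subgraphs, and your explicit non-emptiness term is a harmless (indeed prudent) addition.
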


\begin{proof}
We construct $\formula{Cycle}_{\varphi}^{k}$ to be a conjunction containing $\formula{Connected}_{\varphi}^{k}$ as a term.
So henceforth we assume that $\mathcal{F}=\mathcal{F}_{\varphi}(M)\cup\{\theta(X_{1}),\ldots, \theta(X_{k})\}$ is a graphical family, and that $G(\mathcal{F})[\theta(X')]$ is connected.

The other term in the conjunction is itself the disjunction of two formulas.
The first formula is
\[
\sing[X']\land \exists_{1} X'' (X'\subseteq X'' \land \formula{Vertex}[X'']).
\]
This is satisfied if and only if $\theta(X')$ contains a single edge, and this edge is a loop in $G(\mathcal{F})$.
The second formula is the conjunction of
\[
\forall X'' (X''\subseteq X' \land \sing[X''] \to
\neg\exists_{1} X''' (X''\subseteq X''' \land \formula{Vertex}[X'''])
)
\]
and a formula saying that for every subset $X''$ satisfying $\formula{Vertex}[X'']$, $X''$ contains exactly two singleton subsets of $X'$, or no such subsets.
This conjunction asserts that $\theta(X')$ contains no loops of $G(\mathcal{F})$ and that every vertex of $G(\mathcal{F})$ that is incident with an edge in $\theta(X')$ is incident with exactly two such edges.

To summarise, $\formula{Cycle}_{\varphi}^{k}$ is satisfied if and only if: $\mathcal{F}$ is a graphical family and $G(\mathcal{F})[\theta(X')]$ is connected, and either $\theta(X')$ contains a single loop edge, or it contains no loops and every vertex is incident with either zero or two edges in $\theta(X')$.
Thus $\formula{Cycle}_{\varphi}^{k}$ is satisfied if and only if $G(\mathcal{F})[\theta(X')]$ is a cycle.
\end{proof}

\begin{proposition}
\label{prop_bicycleformula}
Let $\varphi[X]$ be an \mso\ formula, and let $k$ be a non-negative integer.
There is an \mso\ formula, $\formula{Bicycle}_{\varphi}^{k}[X_{1},\ldots, X_{k},X']$, such that whenever $M=(E,\mathcal{I})$ is a set-system and $\theta$ is an interpretation, $(M,\theta)$ satisfies $\formula{Bicycle}_{\varphi}^{k}$ if and only if $\mathcal{F} = \mathcal{F}_{\varphi}(M)\cup\{\theta(X_{1}),\ldots, \theta(X_{k})\}$ is a graphical family of $E$, and the subgraph $G(\mathcal{F})[\theta(X')]$ is a bicycle.
\end{proposition}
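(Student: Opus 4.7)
The plan is to translate the definition of a bicycle directly into \mso, using the formulas $\formula{Connected}_{\varphi}^{k}$ and $\formula{Cycle}_{\varphi}^{k}$ from the preceding two propositions. Recall from the introduction that a bicycle is by definition a \emph{minimal} edge-set $X$ such that $G[X]$ is connected and contains more than one cycle. Both positive properties, being connected and containing at least two distinct cycles, are already expressible in \mso\ relative to the graphical family $\mathcal{F} = \mathcal{F}_{\varphi}(M)\cup\{\theta(X_1),\ldots,\theta(X_k)\}$, and minimality is a routine universal quantification over proper subsets.

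Explicitly, I would first introduce an auxiliary abbreviation $\formula{TwoCycles}_{\varphi}^{k}[X_1,\ldots,X_k,X']$ for the formula
\[ \exists Y_1\exists Y_2\bigl(Y_1\subseteq X'\land Y_2\subseteq X'\land Y_1\ne Y_2 \land \formula{Cycle}_{\varphi}^{k}[X_1,\ldots,X_k,Y_1] \land \formula{Cycle}_{\varphi}^{k}[X_1,\ldots,X_k,Y_2]\bigr), \]
which is satisfied by $(M,\theta)$ precisely when $G(\mathcal{F})[\theta(X')]$ contains at least two distinct cycles. Then $\formula{Bicycle}_{\varphi}^{k}[X_1,\ldots,X_k,X']$ is taken to be the conjunction of $\formula{Connected}_{\varphi}^{k}[X_1,\ldots,X_k,X']$, the new formula $\formula{TwoCycles}_{\varphi}^{k}[X_1,\ldots,X_k,X']$, and the minimality clause
\[ \forall X''\bigl((X''\subseteq X' \land X''\ne X')\to \neg(\formula{Connected}_{\varphi}^{k}[X_1,\ldots,X_k,X''] \land \formula{TwoCycles}_{\varphi}^{k}[X_1,\ldots,X_k,X''])\bigr). \]

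Unwinding the semantics, this conjunction is satisfied if and only if $\mathcal{F}$ is a graphical family, $G(\mathcal{F})[\theta(X')]$ is connected, it contains two distinct cycles, and no proper subset of $\theta(X')$ enjoys these two properties simultaneously, which is exactly the definition of a bicycle. I expect no real obstacle here, as every ingredient is either one of the formulas constructed in the preceding propositions or a standard abbreviation developed earlier in Section \ref{logic}. The only small bookkeeping item is to rename bound variables as needed so that the substitutions into $\formula{Cycle}_{\varphi}^{k}$ and $\formula{Connected}_{\varphi}^{k}$ respect the paper's convention that no variable is simultaneously bound in one conjunct and free in another, but this is purely syntactic.
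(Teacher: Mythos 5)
Your construction matches the paper's proof essentially verbatim: both express a bicycle as a minimal edge set that satisfies $\formula{Connected}_{\varphi}^{k}$ and contains two distinct subsets satisfying $\formula{Cycle}_{\varphi}^{k}$, with minimality enforced by a universal quantification over proper subsets. The only cosmetic difference is that you fold the graphical-family requirement into $\formula{Connected}_{\varphi}^{k}$ (which already contains $\formula{Graphical}_{\varphi}^{k}$ as a conjunct) rather than stating it separately, which is harmless.
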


\begin{proof}
The bicycles are exactly the minimal sets of edges inducing a connected subgraph with at least two cycles.
So we set $\formula{Bicycle}_{\varphi}^{k}$ to say that $\formula{Graphical}_{\varphi}^{k}$ holds, and that the following statements are satisfied by $X'$, but no proper subset of $X'$: $\formula{Connected}_{\varphi}^{k}$ holds, and there exist two distinct subsets, $X'', X'''\subseteq X'$, such that $\formula{Cycle}_{\varphi}^{k}[X'']$ and $\formula{Cycle}_{\varphi}^{k}[X''']$ are both satisfied.
\end{proof}

\begin{lemma}
\label{lem_bicircularformula}
Let $\varphi[X]$ be an \mso\ formula and let $k$ be a non-negative integer.
There is an \mso\ formula, $\formula{Bicircular}_{\varphi}^{k}[X_{1},\ldots, X_{k}]$, such that whenever $M=(E,\mathcal{I})$ is a set-system and $\theta$ is an interpretation, $(M,\theta)$ satisfies $\formula{Bicircular}_{\varphi}^{k}$ if and only if $\mathcal{F} = \mathcal{F}_{\varphi}(M)\cup\{\theta(X_{1}),\ldots, \theta(X_{k})\}$ is a graphical family of $E$, and $M$ is the bicircular matroid $B(G(\mathcal{F}))$.
\end{lemma}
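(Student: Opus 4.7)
The plan is to build $\formula{Bicircular}_\varphi^k$ as a conjunction whose conjuncts (i) force $\mathcal{F}=\mathcal{F}_\varphi(M)\cup\{\theta(X_1),\ldots,\theta(X_k)\}$ to be a graphical family, (ii) force the underlying set-system $M$ to be a matroid, and (iii) force the circuit family of $M$ to coincide with the bicycle family of the graph $G(\mathcal{F})$. Since a matroid is determined by its circuits, condition (iii) is exactly what is needed to ensure $M=B(G(\mathcal{F}))$, because the defining property of the bicircular matroid is that its circuits are precisely the bicycles of the graph.

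First I would take the conjunction of $\formula{Matroid}$ and $\formula{Graphical}_\varphi^k[X_1,\ldots,X_k]$ as the first two conjuncts. The first ensures we may speak of circuits of $M$, and the second supplies the graphical family $\mathcal{F}$ whose bicycles we want to talk about. Next, recall from the standard remarks earlier in the section that ``$X'$ is a circuit of $M$'' is expressible in \mso, as it is just the conjunction of $\neg\ind[X']$ with the statement that every proper subset of $X'$ satisfies $\ind$; call this formula $\formula{Circuit}[X']$.

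The third and decisive conjunct is then
\[
\forall X'\bigl(\formula{Circuit}[X']\leftrightarrow \formula{Bicycle}_\varphi^k[X_1,\ldots,X_k,X']\bigr),
\]
using $\formula{Bicycle}_\varphi^k$ from Proposition \ref{prop_bicycleformula}. Taking the conjunction of these three conjuncts yields the desired $\formula{Bicircular}_\varphi^k$. To verify correctness: if $(M,\theta)$ satisfies the formula, then $M$ is a matroid, $\mathcal{F}$ is a graphical family, and the circuits of $M$ are exactly the bicycles of $G(\mathcal{F})$; the bicircular matroid $B(G(\mathcal{F}))$ is by definition the matroid with precisely these circuits, so $M=B(G(\mathcal{F}))$. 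Conversely, if $M=B(G(\mathcal{F}))$ for a graphical family of the required form, each conjunct is directly satisfied.

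There is no substantial obstacle here; the only subtlety is making sure that the free variables $X_1,\ldots,X_k$ used inside $\formula{Bicycle}_\varphi^k$ are not inadvertently rebound by the universal quantifier over $X'$, and that we rename any internal bound variables of the auxiliary formulas so that the syntactic well-formedness conditions on \mso\ conjunctions (no variable free in one conjunct and bound in another) are satisfied. This is routine by the renaming remarks made when the logic was introduced.
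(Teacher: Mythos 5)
Your proposal is correct and is essentially identical to the paper's own proof: both take the conjunction of $\formula{Matroid}$, $\formula{Graphical}_{\varphi}^{k}[X_{1},\ldots,X_{k}]$, and the statement that a subset is a circuit of $M$ if and only if it satisfies $\formula{Bicycle}_{\varphi}^{k}$. Your additional remarks on expressing circuits and on renaming bound variables are routine and consistent with the conventions already established in the paper.
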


\begin{proof}
The desired formula is simply the conjunction of $\formula{Matroid}$, $\formula{Graphical}_{\varphi}^{k}[X_{1},\ldots, X_{k}]$, and the statement that for every subset $X'$, we have that $X'$ is a circuit of $M$ if and only if $\formula{Bicycle}_{\varphi}^{k}[X_{1},\ldots, X_{k},X']$ holds.
\end{proof}

\begin{lemma}
\label{connectedcase}
Let $\mathcal{M}$ be a class of matroids such that a matroid is in $\mathcal{M}$ if and only if each connected component is in $\mathcal{M}$.
If there is an \mso\ sentence that is satisfied exactly by the connected matroids in $\mathcal{M}$,
then there is a sentence that characterises $\mathcal{M}$.
\end{lemma}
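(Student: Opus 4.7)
The plan is to construct the desired sentence by a standard relativization argument: given $\psi$ (the hypothesized sentence characterising the connected matroids in $\mathcal{M}$), I would build $\Psi$ that asserts ``$M$ is a matroid and, for every connected component $X$ of $M$, the restriction $M|X$ satisfies $\psi$''. Concretely, I aim for a sentence of the form
\[
\formula{Matroid}\land \forall X\bigl(\formula{Component}[X]\to \psi^{X}\bigr),
\]
where $\formula{Component}[X]$ is a new formula asserting that $X$ is a connected component of $M$, and $\psi^{X}$ is a relativization of $\psi$ to the subset $X$.

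To build $\formula{Component}[X]$, recall from Section~\ref{prelims} that a component is a minimal non-empty separator, and that a non-empty subset $X$ is a separator iff no circuit of $M$ meets both $X$ and $E-X$. We can already characterise circuits in \mso\ (minimal dependent sets), and disjointness of two sets can be expressed by saying every singleton subset of one fails to be a subset of the other. Hence ``$X$ is a non-empty separator'' is \mso-definable, and then $\formula{Component}[X]$ simply asserts in addition that no proper non-empty subset of $X$ is a separator.

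For the relativization, fix a variable $X$ not occurring in $\psi$ and define $\psi^{X}$ by structural induction on $\psi$: atomic formulas $\ind[Y]$ and $Y\subseteq Y'$ are left unchanged, negation and conjunction commute with the translation, and each existential $\exists Y\,\varphi$ becomes $\exists Y\,(Y\subseteq X\land \varphi^{X})$. A routine induction then shows that, for any matroid $M=(E,\mathcal{I})$ and any $Y\subseteq E$, the pair $(M,X\mapsto Y)$ satisfies $\psi^{X}$ if and only if the set-system $M|Y=(Y,\mathcal{I}\cap 2^{Y})$ satisfies $\psi$. The key observation making this induction transparent is that for subsets of $Y$, independence in $M|Y$ coincides with independence in $M$, and the subset relation is absolute, so the atomic semantics transfer verbatim once quantifiers have been bounded to $X$.

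Finally I would verify correctness of $\Psi$. If $M$ satisfies $\Psi$, then $M$ is a matroid and for every component $Y$ of $M$ the restriction $M|Y$ satisfies $\psi$; by the hypothesis on $\psi$ each $M|Y$ is a connected matroid in $\mathcal{M}$, and the assumption on $\mathcal{M}$ then gives $M\in\mathcal{M}$. Conversely, if $M\in\mathcal{M}$, each component of $M$ is itself in $\mathcal{M}$ and is automatically connected, so satisfies $\psi$, and hence $\Psi$ holds. The only non-bookkeeping step is the relativization, but there is no real obstacle here, since $M|Y$ is a matroid whose independence structure agrees with that of $M$ on $2^{Y}$, which is precisely what makes the inductive translation sound.
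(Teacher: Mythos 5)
Your proposal is correct and takes essentially the same approach as the paper: both define $\formula{Component}[X]$ via minimal non-empty separators and then relativize the given sentence to each component, relying on the fact that independence and inclusion are absolute for subsets of a component. The only cosmetic difference is that the paper first converts the sentence to prenex normal form and bounds the quantifiers there, whereas you relativize by structural induction on the formula.
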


\begin{proof}
We start by constructing the formula $\formula{Component}[X]$, relying on Proposition \ref{prop_kseparators}.
We set $\formula{Component}[X]$ to be the conjunction of \formula{Matroid} and the statement that $X$ is minimally non-empty with respect to being $1$\dash separating.
(We also allow $X$ to be empty in the case that $M=(E,\mathcal{I})$ has an empty ground-set.)
Thus $X$ satisfies \formula{Component} in $M=(E,\mathcal{I})$ if and only if $M$ is a matroid and $X$ is a connected component.

Let $\varphi$ be a sentence that is satisfied exactly by the connected matroids in $\mathcal{M}$.
Using standard methods, we can assume that $\varphi$ is in \emph{prenex normal form}.
That is, $\varphi$ has the form
\[
Q_{1}X_{i_{1}}\cdots Q_{n}X_{i_{n}}\ \omega,
\]
where each $Q_{i}$ is an existential or universal quantifier, and $\omega$ is a quantifier-free formula with $\{X_{i_{1}},\ldots, X_{i_{n}}\}$ as its set of variables.

Our new sentence is the conjunction of \formula{Matroid} and a sentence that we construct by starting with $\forall A\ \formula{Component}[A]\to$, and then modifying $\varphi$ by successively replacing $\exists X_{i_{j}}$ with
\[
\exists X_{i_{j}}\
X_{i_{j}}\subseteq A\ \land
\]
and replacing $\forall X_{i_{j}}$ with
\[
\forall X_{i_{j}}\
X_{i_{j}}\subseteq A \to.
\]
For example, if $\varphi$ were $\exists X_{1}\forall X_{2}\ \ind[X_{1}]\land (X_{1}\subseteq X_{2})$, then we would construct the sentence
\[
\forall A\ \formula{Component}[A] \to (\exists X_{1}\ X_{1}\subseteq A\land (\forall X_{2}\ X_{2}\subseteq A \to \ind[X_{1}]\land (X_{1}\subseteq X_{2}))).
\]
This new sentence will be satisfied if and only if $M$ is a matroid and each connected component satisfies $\varphi$.
\end{proof}

The conditions of Lemma \ref{connectedcase} apply when $\mathcal{M}$ is the class of bicircular matroids.
So we can now prove Theorem \ref{thm_main} by providing an \mso\dash characterisation of connected bicircular matroids.
We recall that a connected matroid $M$ is bicircular if and only if it is isomorphic to $U_{0,1}$, or if there is a graph $G$ such that $M=B(G)$.

\section{The 3-connected case}
\label{3-connected-case}

In this section we construct an \mso\dash characterisation of $3$\dash connected bicircular matroids. 

\begin{proposition}
\label{sammy}
Let $G$ be a connected graph with at least four vertices such that $M=B(G)$ is $3$\dash connected, and let $v$ be a vertex of $G$. 
Then $\st(v)$ is a non-separating cocircuit of $M$ if and only if $G-v$ is not a cycle and has no pendent edge.
\end{proposition}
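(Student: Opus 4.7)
The plan is to combine Proposition \ref{stars_are_cocircuits} (cocircuit characterisation) with Proposition \ref{conn}(i) (connectedness of $B(G-v)$), after first using Proposition \ref{conn}(ii) to extract the graph-theoretic consequences of $M$ being $3$\dash connected.

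First I would record the setup. Since $M=B(G)$ is $3$\dash connected, Proposition \ref{conn}(ii) tells us that $G$ has minimum degree at least three and no cut vertex; in particular $G$ is $2$\dash connected, so $G-v$ is connected, and because $|V(G)|\geq 4$, the graph $G-v$ has at least three vertices. Also $\st(v)$ is nonempty (since $v$ has degree at least three). Deleting the edges $\st(v)$ from $G$ leaves $v$ as an isolated vertex, so $M\ba\st(v)=B(G\ba\st(v))=B(G-v)$ as matroids.

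For the forward direction, assume $\st(v)$ is a non-separating cocircuit. Proposition \ref{stars_are_cocircuits} (applicable since $G$ is $2$\dash connected) then implies $G-v$ contains a cycle. Non-separability means $M\ba\st(v)=B(G-v)$ is connected, and since $G-v$ is connected with at least three vertices, Proposition \ref{conn}(i) forces $G-v$ to not be a cycle and to have no pendent edge, which is what we want.

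For the converse, assume $G-v$ is not a cycle and has no pendent edge. Since $G-v$ is connected with at least three vertices and contains no leaf, every vertex has degree at least two, so $G-v$ contains a cycle. By Proposition \ref{stars_are_cocircuits}, $\st(v)$ is a cocircuit of $M$. By Proposition \ref{conn}(i), $B(G-v)=M\ba\st(v)$ is connected, so $\st(v)$ is non-separating.

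No step looks genuinely hard; the only thing to be careful about is the hidden hypotheses of the cited propositions ($G-v$ connected, with at least three vertices), both of which come for free from $M$ being $3$\dash connected together with $|V(G)|\geq 4$.
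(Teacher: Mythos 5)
Your proposal is correct and follows essentially the same route as the paper: both rest on Proposition \ref{stars_are_cocircuits} together with Proposition \ref{conn}, using that $G-v$ is connected with at least three vertices. The only cosmetic difference is that the paper argues the forward direction contrapositively via an explicit coloop in $M\ba\st(v)$, whereas you invoke Proposition \ref{conn}(i) directly; the content is the same.
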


\begin{proof}
Since $M$ is $3$\dash connected, Proposition \ref{conn} tells us that $G$ is $2$\dash connected.
If $G-v$ is a cycle or has a pendent edge, then $M\ba \st(v)$ contains a coloop.
As $G-v$ is connected and has at least three vertices, it has at least two edges, so $B(G-v) = M\backslash \st(v)$ is not connected.
Therefore $\st(v)$ is certainly not a non-separating cocircuit.

For the converse, we assume that $\st(v)$ is not a non-separating cocircuit.
We apply Proposition \ref{stars_are_cocircuits}.
If $\st(v)$ is not a cocircuit, then $G-v$ has no cycles, and therefore has a pendent edge.
Hence we can assume that $\st(v)$ is a cocircuit, so it must be a separating cocircuit, meaning that $M\ba \st(v)$ is not connected.
We apply Proposition \ref{conn} to $M\ba \st(v) = B(G-v)$, noting that $G-v$ is connected with at least three vertices.
We deduce that $G-v$ is either a cycle, or it has a pendent edge, as desired.
\end{proof} 

Let $G$ be a graph such that $B(G)$ is $3$\dash connected.
We say a vertex $v \in V(G)$ is \emph{committed} if $\st(v)$ is a non-separating cocircuit of $B(G)$. 
Otherwise $v$ is \emph{uncommitted}. 
By Proposition \ref{stars}, if $v$ is committed, then in every graph $H$ representing $B(G)$ there is a vertex $x$ with $\st(x)=\st(v)$. 
Thus if every vertex is committed, $B(G)$ is uniquely represented by $G$, up to relabelling of vertices. 
Proposition \ref{sammy} says that for $G$ sufficiently large, a vertex $v$ is committed if and only if $G-v$ properly contains a cycle and has no pendent edge. 
Thus to show that $v$ is uncommitted it suffices to show that $G-v$ is a cycle or has a pendent edge. 

\begin{lemma} \label{billybob}
Let $G$ be a connected graph with at least five vertices such that $M=B(G)$ is $3$\dash connected. 
Let $v$ be a vertex of $G$ such that $G-v$ has at most one cycle. 
Then $G$ has at most three uncommitted vertices. 
\end{lemma}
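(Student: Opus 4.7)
My plan is to apply Proposition \ref{sammy} to each vertex in turn and use the hypothesis that $G-v$ has at most one cycle to sharply constrain what the other uncommitted vertices can look like. Proposition \ref{conn} gives that $G$ is $2$-connected with minimum degree at least three, and Proposition \ref{sammy} together with the hypothesis on $G-v$ confirms that $v$ is uncommitted; it remains to show that at most two other vertices of $G$ are uncommitted.

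The central structural observation is the following. Suppose $u\neq v$ is uncommitted. By Proposition \ref{sammy}, either $G-u$ is a cycle or $G-u$ has a vertex $w$ of degree one. In the second case, $\deg_{G-u}(w)=1$ combined with $\deg_G(w)\geq 3$ and the fact that a loop at $w$ would contribute $2$ to $\deg_{G-u}(w)$ force $w$ to have no loops, exactly one edge to some vertex $x$, and at least two parallel edges to $u$. I will then split on whether $w\neq v$ or $w=v$.

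In the sub-case $w\neq v$, the parallel $wu$-edges lie in $G-v$ and form a digon; a third parallel edge would create a second cycle in $G-v$, forbidden by the hypothesis, so $w$ has exactly two edges to $u$. This digon must be the unique cycle of $G-v$, and $\{u,w\}$ is forced to be its vertex set, yielding at most two possible $u$. The main obstacle is the remaining sub-case $w=v$. Here $v$'s non-loop neighbours are only $\{u,x\}$ with multiplicities at least two and one, so any leaf of $G-v$, needing at least two $v$-edges to reach $G$-degree three, must coincide with $u$; every vertex of $G-v$ outside $\{u,x\}$ has no $v$-edges and hence $G-v$-degree at least three. A degree-sum then gives $\sum \deg_{G-v} \geq 1+2+3(n-3)=3n-6$, while $G-v$ has at most $n-1$ edges (connected with at most one cycle), forcing $n\leq 4$ and contradicting $n\geq 5$.

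A similar counting rules out the remaining branch, that $G-u$ is a cycle for some $u\neq v$: then $G-v-u$ is a path on $n-2$ vertices, and minimum degree three in $G$ forces each such path-vertex to have at least one $u$-edge, giving at least $2n-5$ edges in $G-v$, exceeding the at most $n-1$ allowed. Combining all cases, every uncommitted $u\neq v$ comes from the sub-case $w\neq v$ and is one of at most two digon-endpoints; together with $v$ this yields at most three uncommitted vertices in total.
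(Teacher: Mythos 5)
Your proof is correct, and its engine is the same as the paper's: an uncommitted vertex $u\neq v$ forces (via Proposition \ref{sammy} and the minimum-degree bound from Proposition \ref{conn}) a pair of parallel edges from $u$ to a leaf of $G-u$, and when that leaf is not $v$ the resulting digon must be the unique cycle of $G-v$, so at most two such $u$ can exist. The only real divergence is peripheral: you eliminate the two boundary branches ($G-u$ a cycle for some $u\neq v$, and $v$ itself being the leaf of $G-u$) by degree-sum counting against the bound $|E(G-v)|\leq |V(G)|-1$ available for a connected graph with at most one cycle, whereas the paper handles them structurally (the wheel analysis in its first claim, and a separate claim that $v$ is never a leaf of $G-x$); both routes are sound.
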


\begin{proof}
Note that since $M$ is $3$\dash connected, $G$ is $2$\dash connected and has minimum degree at least three.
Observe that either $G-v$ is a cycle, or it has a pendent edge.
Thus $v$ is uncommitted.

\begin{claim}
\label{sally} 
Let $x$ be a vertex of $G$ such that $G-x$ is a cycle.
Then every vertex is committed, except for $x$.
\end{claim}

\begin{proof}
Assume $G-x$ is a cycle, and note it has at least four vertices.
Then $x$ is uncommitted by Proposition \ref{sammy}.
Since $G$ has minimum degree at least three, each vertex of $G-x$ is adjacent to $x$.
Therefore $G$ is obtained from a wheel graph on at least five vertices by possibly adding parallel edges to the spokes, and possibly adding a loop at $x$, which is the hub of the wheel.
Now it follows that if $y$ is a vertex other than $x$, then $G-y$ is not a cycle, and has no pendent edges.
Therefore $y$ is committed.
\end{proof}

By Claim \ref{sally} we may now assume that there is no vertex whose deletion leaves just a cycle.

\begin{claim}
\label{wegotcycles}
For every vertex $x \neq v$, $G-x$ properly contains a cycle. 
\end{claim}

\begin{proof}
Since $G-v$ contains at most one cycle, and is not a cycle itself, it contains a leaf $u$.
As $G$ has minimum degree at least three, there are at least two edges joining $u$ and $v$.
So if $x$ is any vertex other than $u$ or $v$, then $G-x$ properly contains a two-edge cycle.
Suppose that $G-u$ does not properly contain a cycle.
Then $G-v$ does not have two leaves, or else we could apply a symmetric argument and deduce that $G-u$ has a two-edge cycle.
Hence $G-v$ is not a tree, so it contains a unique cycle, $C$.
Since $u$ is a leaf it is not in $C$, so $G-u$ properly contains $C$, a contradiction.
\end{proof}

\begin{claim} 
\label{claim_v_not_pendent}
For every vertex $x \in V(G)$, $v$ is not a leaf of $G-x$. 
\end{claim} 

\begin{proof}
Since $G-v$ is not a cycle, it contains at least one leaf.
There are at least two edges joining $v$ to any leaf of $G-v$.
So if $G-v$ has two leaves, then $v$ has degree at least two in $G-x$.
Therefore we assume that $G-v$ has exactly one leaf.
This means that $G-v$ is constructed by attaching a path to a cycle.
Since $G-v$ has at least four vertices, it has 
at least three vertices with degree at most two.
There is at least one edge joining $v$ to any such vertex.
It now follows that $v$ has degree at least two in $G-x$, as desired.
\end{proof}

By hypothesis, $G-v$ contains at most one cycle.
Assume that $G-v$ does not contain a two-edge cycle.
We claim that in this case, every vertex other than $v$ is committed.
Assume otherwise, and let $x \neq v$ be a vertex of $G$ such that $\st(x)$ is not a non-separating cocircuit.
Then either $G-x$ is a cycle or has a pendent edge, by Proposition \ref{sammy}.
Since $G-x$ is not a cycle by assumption, $G-x$ has a pendent edge. Let $w$ be the leaf incident to a pendent edge of $G-x$. 
By Claim \ref{claim_v_not_pendent}, $w \neq v$.
Since the minimum degree of $G$ is at least three, there are at least two edges joining $x$ and $w$.
This implies that $G-v$ contains a two-edge cycle, contrary to hypothesis.

Now we can assume that $G-v$ contains a two-edge cycle, $C$, and that $C$ is therefore the unique cycle in $G-v$.
We claim that if $x$ is a vertex not in $C$ and not equal to $v$, then $x$ is committed; for if not, then we can argue as in the previous paragraph that $G-x$ contains a leaf $w$, that $w\ne v$, and that $x$ and $w$ are joined by at least two edges.
But now $G-v$ contains two distinct cycles: $C$, and a pair of edges joining $x$ to $w$.
This contradiction completes the proof.
\end{proof}

\begin{definition}
Let $C^{*}$ be a cocircuit of the matroid $M$.
Assume that $M\ba C^{*}$ has exactly one connected component, $D$, with size greater than one.
Assume also that whenever $x$ is a coloop of $M\ba C^{*}$, there is a rank\dash $2$ clonal class, $F$, of $M$ contained in $C^{*}$, and a circuit, $C$, of $M$ such that
\begin{enumerate}[label=\textup{(\roman*)}]
\item $x\in C$,
\item $C\cap C^{*}$ is a $2$-element subset of $F$, and
\item $C\cap D\ne \emptyset$.
\end{enumerate}
In this case we say $C^{*}$ is a \emph{good cocircuit}.
\end{definition}

Note that every non-separating cocircuit of $M$ is vacuously a good cocircuit.

\begin{lemma}
\label{lem_goodcocircuits}
Let $G$ be a connected graph with at least five vertices such that $M=B(G)$ is $3$\dash connected.
Assume that $G-v$ has at least two cycles for every vertex $v$.
Then every vertex star of $G$ is a good cocircuit of $M$, and every good cocircuit of $M$ is a vertex star of $G$.
\end{lemma}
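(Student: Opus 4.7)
The plan is to prove the two directions separately, using the explicit classification of cocircuits of $B(G)$ recalled in the preliminaries. For the forward direction, fix $v \in V(G)$. Since $G-v$ has at least two cycles, Proposition \ref{stars_are_cocircuits} gives that $\st(v)$ is a cocircuit of $M$. If $\st(v)$ is non-separating it is vacuously good, so I may assume $M \backslash \st(v) = B(G-v)$ is disconnected. Proposition \ref{prop_onecomp} together with the existence of a bicycle in $G-v$ ensures there is a unique component $D$ of $M \backslash \st(v)$ of size greater than one. A rank calculation using $r(X) = |V(X)| - a(X)$, combined with the hypothesis that $G-v$ has at least two cycles, forces each coloop $x$ of $B(G-v)$ to be a bridge of $G-v$ that separates a tree $H_2$ from a cyclic subgraph $H_1$; moreover every edge of the pendant tree $H_2 \cup \{x\}$ is itself a coloop.

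To verify the good cocircuit condition for an arbitrary coloop $x$, I would choose a leaf $y$ of the pendant tree $H_2 \cup \{x\}$ lying on the side of $x$ opposite to $H_1$. The minimum-degree bound $\deg_G(y) \geq 3$ from Proposition \ref{conn} produces at least two parallel edges $e, f$ between $y$ and $v$. Take $F$ to be the parallel class containing $e$ and $f$, which is a rank-$2$ clonal class contained in $\st(v)$. Choosing any cycle $C_0$ in $H_1$ together with a simple path $P$ from $y$ through the pendant tree, across $x$, and through $H_1$ to a vertex of $C_0$, set $C = \{e,f\} \cup E(P) \cup E(C_0)$. By construction $C$ is a loose handcuff, hence a bicycle of $G$, and it satisfies $x \in C$, $C \cap \st(v) = \{e,f\} \subseteq F$, and $E(C_0) \subseteq D$.

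For the reverse direction, let $C^*$ be a good cocircuit. The cocircuits of $B(G)$ fall into two types: (a) complements of spanning trees of $G$, and (b) disjoint unions $A \cup B$, where $B$ is a bond of $G$ splitting $G\backslash B$ into components $G_1, G_2$ and $A = E(G_1) - T_1$ for a spanning tree $T_1$ of $G_1$. Type (a) is ruled out immediately, because $M \backslash C^* = B(T)$ has only coloops and hence no component of size greater than one. For type (b), suppose for contradiction that $T_1$ contains an edge $x$. Then $x$ is a coloop of $M \backslash C^*$, and the good cocircuit property supplies a rank-$2$ clonal class $F \subseteq C^*$ together with a bicycle $C$ of $G$ satisfying $x \in C$, $|C \cap C^*| = 2$, $C \cap C^* \subseteq F$, and $C \cap D \neq \emptyset$, where $D \subseteq E(G_2)$ is the unique large component of $M \backslash C^*$. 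By Proposition \ref{prop_bicircularclones}, $F$ is a parallel class in $G$; since parallel edges share endpoints, either $F \subseteq A$ (both endpoints in $V(G_1)$) or $F \subseteq B$ (one endpoint on each side).

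The main obstacle is ruling out both of these subcases. If $F \subseteq A$, then $C$ uses no edge of $B$, so $C \subseteq E(G_1)$, giving $C \cap D = \emptyset$ and contradicting condition (iii). The subtler case $F \subseteq B$ confines $C$ to the graph $T_1 \cup \{e, f\} \cup G_2$, whose only cycles are the two-cycle $\{e, f\}$ and the cycles of $G_2$; in particular $x \in T_1$ lies in no cycle of this ambient graph. In a tight handcuff $x$ would have to lie in one of the two cycles, which is impossible. In a loose handcuff the connecting path either stays inside $G_2$ (and so misses $x$), or starts at the $G_1$-endpoint of $\{e, f\}$ and requires a further cut edge beyond $e, f$, which does not exist. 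In a theta, a parity argument on the number of cut crossings of the three internally disjoint paths likewise forces an extra cut edge. These contradictions force $T_1 = \emptyset$, so $G_1$ is a single vertex $v$ and $C^* = A \cup B = \st(v)$.
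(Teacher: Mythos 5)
Your proposal is correct and follows essentially the same route as the paper: identify the coloops of $M\backslash\st(v)$ as edges of pendant trees hanging off the cyclic core, use the minimum-degree-three bound to find a parallel (hence rank-$2$ clonal) class at a leaf and assemble a loose handcuff certifying goodness, and in the converse direction rule out the spanning-tree-complement case and derive a contradiction from an edge of the tree $T_1$ via the parallel class $F$ and the bond $B$. The only differences are cosmetic: where you assert via ``a rank calculation'' that every coloop of $B(G-v)$ is a bridge cutting off a pendant tree, the paper proves this by extending a path from the large component $D$ through $x$ until it must terminate at a leaf of $G-v$; and your endgame in the converse (no cycle of the ambient graph $T_1\cup\{e,f\}\cup G_2$ passes through $x$, so $G[C]$ cannot be any type of bicycle) replaces the paper's equivalent observation that a leaf of the forest $C\cap E(G_2)$ distinct from the common endpoint of $e$ and $f$ would have degree one in $G[C]$.
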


\begin{proof}
By Proposition \ref{conn}, $G$ is $2$\dash connected with minimum degree at least three, and no vertex is incident with more than one loop.
Let $v$ be a vertex. 
By Proposition \ref{stars_are_cocircuits}, $\st(v)$ is a cocircuit of $M$.
Because $G-v$ is connected and contains at least two cycles, $B(G-v) = M\ba \st(v)$ contains a circuit with at least two elements.
Therefore $M\ba \st(v)$ has a component with at least two elements.
Proposition \ref{prop_onecomp} tells us that $M\ba \st(v)$ has at most one such component, so it has exactly one.
Let $D$ be this component; any two edges in $G[D]$ are contained in a common bicycle.

Let $x$ be a coloop of $M\ba \st(v)$.
Note that $x$ cannot be a loop in $G$, for otherwise we can find a path that joins the vertex incident with $x$ to a vertex in $G[D]$.
As $G[D]$ is connected and contains a cycle, this implies that there is a circuit of $M\ba\st(v)$ containing $x$, which is impossible as $x$ is a coloop in this matroid.
Let $P'$ be a path in $G-v$ with vertex sequence $v_{0},\ldots, v_{m}$, where $P'$ is chosen to be as short as possible subject to $v_{0}$ being in $G[D]$, and $x$ joining $v_{m-1}$ to $v_{m}$.
We extend $P'$ as far as possible to a path, $P$, of $G-v$ with the vertex sequence $v_{0},\ldots, v_{m},v_{m+1},\ldots, v_{n}$.
We claim that $v_{n}$ is a leaf in $G-v$.
Assume otherwise.
Let $e$ be an edge of $G-v$ incident with $v_{n}$ but not in $P$.
If $e$ were incident with a vertex in $D$, then we could find a bicycle of $G-v$ containing all the edges in $P$, and this contradicts the fact that $x$ is a coloop in $M\ba \st(v)$.
So $e$ is either a loop, or it joins $v_{n}$ to a vertex in
$v_{0},\ldots, v_{n-1}$, for otherwise we can extend $P$ by the edge $e$, and contradict our choice.
Assume that $e$ joins $v_{n}$ to $v_{i}$, for some $i\in\{0,\ldots, n\}$ (note that this includes the case that $e$ is a loop incident with $v_{n}$).
Therefore $v_{i},\ldots, v_{n},v_{i}$ is the vertex sequence of a cycle, and $v_{0},\ldots, v_{i}$ is the vertex sequence of a path joining this cycle to a vertex in $G[D]$.
As $G[D]$ is connected and contains a cycle, we can now find a handcuff of $G-v$ that contains $x$.
This contradicts the fact that $x$ is a coloop in $M\ba\st(v)=B(G-v)$.
Therefore $v_{n}$ is a leaf of $G-v$, as claimed.

Since $G$ has minimum degree at least three, there are at least two parallel edges joining $v$ to $v_{n}$.
Let $F$ be the parallel class containing these two edges.
It follows from Proposition \ref{prop_bicircularclones} that $F$ is a rank\dash $2$ clonal class of $M$.
Any two edges in $F$ form a cycle of $G$, and $P$ is a path joining this cycle to a vertex in $G[D]$.
As $G[D]$ is connected and contains a cycle, we can find a handcuff of $G$, hence a circuit of $M$, consisting of two edges from $F$, the edges in $P$ (including $x$), and a subset of $D$.
This is enough to certify that $\st(v)$ is a good cocircuit of $M$.

For the converse, we let $C^{*}$ be a good cocircuit of $M$.
Let $D$ be the unique component of $M\ba C^{*}$ that contains more than one element.
Note that any element not in $C^{*}\cup D$ is a coloop of $M\ba C^{*}$.
If $C^{*}$ does not contain a bond of $G$, then it is the complement of a spanning tree in $G$.
But this would imply that every component of $M\ba C^{*}$ is a coloop, which is impossible as $D$ contains at least two elements.
Therefore we let $B$ be a bond contained in $C^{*}$.
Let $G_{1}$ and $G_{2}$ be the connected components of $G\ba B$.
Without loss of generality, we can assume that $C^{*}$ is the disjoint union of $A$ and $B$, where $A$ is a minimal set of edges in $G_{2}$ such that $G_{2}\ba A$ is a tree.
This means that $D$ is contained in the edge-set of $G_{1}$.

If $G_{2}$ contains a single vertex, $u$, then $A$ is the set of loops incidence with $u$, and $B$ is the set of non-loop edges incident with $u$.
In this case $C^{*}=\st(u)$, and we have nothing left to prove.
Therefore we can let $x$ be an edge in $G_{2}\ba A$.
Thus $x$ is a coloop in $M\ba C^{*}$.
Since $C^{*}$ is a good cocircuit, there is a rank\dash $2$ clonal class, $F$, and a circuit, $C$, of $M$, containing $x$, such that $C\cap C^{*}$ is a $2$-element subset of $F$, and $C$ contains at least one element of $D$.
Thus $C$ is a bicycle of $G$.
We let $e$ and $f$ be the two elements in $C\cap C^{*}$.
Then $e$ and $f$ belong to $F$, so they are clones.
From Proposition \ref{prop_bicircularclones}, we see that $e$ and $f$ are parallel edges of $G$.
As $e$ and $f$ are in $C^{*}=A\cup B$, it is obvious that both edges are in $A$, or both are in $B$.
Assume $e,f\in A$.
Then $C\cap B=\emptyset$, since otherwise $C \cap C^*$ is not a $2$\dash element subset of $F$.
But $C$ contains an edge of $G_{2}$ (namely $x$), as well as edges of $G_{1}$ (since $C$ contains at least one edge in $D$).
This leads to a contradiction, as $G[C]$ is a connected graph.
Therefore $e$ and $f$ are both in $B$.
Let $u$ be the unique vertex in $G_{2}$ that is incident with $e$ and $f$.
The edges of $C$ that are in $G_{2}$ form a forest, since $C$ contains no edges in $A$, and $G_{2}\ba A$ is a tree.
There is at least one edge of $C$ that is in $G_{2}$ (namely $x$), so this forest contains at least two leaves.
We let $v$ be such a leaf that is distinct from $u$.
But then $v$ has degree one in $G[C]$, since it is not incident with any edge in $C\cap B$.
However, $G[C]$ has no degree-one vertices, so we have a contradiction.
\end{proof}

From Lemma \ref{lem_goodcocircuits} we see that if $M=B(G)$ is $3$\dash connected, $G$ has at least five vertices, and $G-v$ always has at least two cycles, then $G$ is the only graph that represents $M$ as a bicircular matroid.

A \emph{rooted matroid} is a pair $(M,L)$ consisting of a matroid $M$ together with a distinguished subset $L$ of its elements. 
A rooted matroid $(M,L)$ is \emph{bicircular} if there is a graph $G$ so that $M=B(G)$ and every element in $L$ is a loop of $G$.

\begin{theorem}
\label{lem_3concase}
There is an \mso\ formula, $\formula{BicircularLoops}[X]$, such that whenever $M=(E,\mathcal{I})$ is a set-system and $L\subseteq E$ is a subset, $(M,X\mapsto L)$ satisfies $\formula{BicircularLoops}$ if and only if $(M,L)$ is a $3$\dash connected rooted bicircular matroid. 
\end{theorem}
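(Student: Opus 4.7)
The plan is to define $\formula{BicircularLoops}[X]$ by asserting that $M$ is $3$\dash connected and that there exists a graphical family $\mathcal{F}$ of $E$ with $M = B(G(\mathcal{F}))$ whose loops include every element of $L$. Most of $\mathcal{F}$ will be produced by an \mso\dash definable property of $M$, with a bounded number of extra set variables used to fill in the few vertex stars that escape the definition. Specifically, I would use the good cocircuits of $M$, supplemented by at most three witness sets corresponding to uncommitted vertex stars as controlled by Lemma \ref{billybob}.

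The first step is to write an \mso\ formula $\varphi[X] := \formula{GoodCocircuit}[X]$ translating the definition immediately preceding Lemma \ref{lem_goodcocircuits}. Cocircuits, connected components (via Proposition \ref{prop_kseparators} applied to $1$\dash separations), coloops, flats, cyclic flats, clones, and clonal classes are all straightforwardly \mso\dash expressible, so the condition that $X$ is a cocircuit with a unique connected component $D$ of size greater than one in $M\ba X$, and with a witnessing pair $(F,C)$ for every coloop of $M\ba X$, can be captured by a quantifier-heavy but routine conjunction.

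I would then take
\[
\formula{BicircularLoops}[X] \;:=\; \formula{Matroid}\land \formula{3-connected}\land \bigl(\varphi_{\mathrm{small}}[X] \lor \varphi_{\mathrm{large}}[X]\bigr),
\]
where $\varphi_{\mathrm{small}}$ is a finite explicit disjunction over the finitely many $3$\dash connected rooted bicircular matroids whose representing graphs have at most four vertices, and
\[
\varphi_{\mathrm{large}}[X] \;:=\; \exists X_1\exists X_2\exists X_3\bigl(\formula{Bicircular}_\varphi^3[X_1,X_2,X_3] \land \psi_L[X,X_1,X_2,X_3]\bigr),
\]
with $\psi_L$ saying that every singleton subset of $X$ is contained in exactly one element of $\mathcal{F}_\varphi(M)\cup\{X_1,X_2,X_3\}$. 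Soundness of $\varphi_{\mathrm{large}}$ is immediate from Lemma \ref{lem_bicircularformula}: it directly yields $M=B(G(\mathcal{F}))$ with every element of $L$ a loop. For completeness, given $(M,L)$ with $M=B(G)$ and $|V(G)|\geq 5$, Lemma \ref{billybob} bounds the number of uncommitted vertices by three; the committed stars are non-separating cocircuits (Proposition \ref{stars}) and hence vacuously good cocircuits, while Lemma \ref{lem_goodcocircuits} (when its hypothesis holds) rules out extraneous good cocircuits. Instantiating $X_1,X_2,X_3$ as the uncommitted vertex stars, padded by repetitions if fewer than three exist, recovers $\mathcal{F}_\varphi(M)\cup\{X_1,X_2,X_3\}$ as the full set of vertex stars of $G$.

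The main obstacle is the mixed regime where some $G-v$ has at most one cycle, so the hypothesis of Lemma \ref{lem_goodcocircuits} fails and $\mathcal{F}_\varphi(M)$ may a priori contain sets that are not vertex stars of $G$. Since $\formula{Bicircular}_\varphi^3$ already demands that $\mathcal{F}_\varphi(M)\cup\{X_1,X_2,X_3\}$ be a graphical family, any such extraneous set would simply prevent $\varphi_{\mathrm{large}}$ from being satisfied; the delicate verification is therefore to show that in the mixed regime either the good cocircuits still coincide with the committed vertex stars (so the completeness argument goes through unchanged) or $(M,L)$ falls into the finite list captured by $\varphi_{\mathrm{small}}$. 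This case analysis, driven by Lemma \ref{billybob} and by examining the graph structure around uncommitted vertices, is the technical heart of the argument.
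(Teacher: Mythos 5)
Your overall architecture---a disjunction over size regimes, with $\formula{Bicircular}_\varphi^k$ supplying most vertex stars from an \mso\dash definable family and a bounded number of existentially quantified sets supplying the rest, plus a loop condition on $X$---is the paper's architecture, but there are two genuine gaps. The more serious one is that you leave unresolved exactly the case the paper has to treat separately: graphs with at least five vertices having a vertex $v$ for which $G-v$ has at most one cycle. In that regime Lemma \ref{lem_goodcocircuits} gives you nothing in either direction: its proof leans on Proposition \ref{prop_bicircularclones}, whose hypothesis is precisely that no such $v$ exists, so good cocircuits need not be vertex stars and vertex stars need not be good cocircuits. An extraneous good cocircuit does not merely ``prevent $\varphi_{\mathrm{large}}$ from being satisfied'' harmlessly; it makes your formula reject a genuine rooted bicircular matroid, destroying completeness. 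The paper's resolution is to switch to a different definable family there: it takes $\varphi = \formula{NonSepCocircuit}$, so that $\mathcal{F}_\varphi(M)$ is exactly the set of committed vertex stars (by Proposition \ref{stars} and the definition of committed), and Lemma \ref{billybob} bounds the number of missing uncommitted stars by three. You invoke Lemma \ref{billybob} but pair it with the wrong family, and the coincidence you would need (good cocircuits equal committed vertex stars in the mixed regime) is only conjectured, not proved.

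The second gap is in $\varphi_{\mathrm{small}}$: there is no ``finite explicit disjunction over the finitely many'' $3$\dash connected rooted bicircular matroids on graphs with at most four vertices, because such graphs may carry arbitrarily many parallel edges and loops, so this class of matroids is infinite. The paper instead writes $\exists X_1\cdots\exists X_i\,\formula{Bicircular}_\varphi^i$ for $i\leq 4$ with $\varphi$ an unsatisfiable formula, so that all vertex stars are existentially quantified. This is fixable by the same device you already use for $\varphi_{\mathrm{large}}$, but note that the same objection blocks your fallback of absorbing the problematic mixed-regime matroids into a finite list: those live on graphs with five or more vertices and again form an infinite family.
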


\begin{proof}
The formula $\formula{BicircularLoops}[X]$ will be a conjunction containing \formula{3-Connected} as a term.
Therefore we may as well assume from this point forward that $M$ is a $3$\dash connected matroid.

A graph, $G$, satisfies exactly one of the following three cases.
\begin{enumerate}[label=\textup{(\roman*)}]
\item $G$ has at most four vertices,
\item $G$ has at least five vertices, and a vertex $v$, such that $G-v$ has at most one cycle, or
\item $G$ has at least five vertices, and $G-v$ has  two cycles for any vertex $v$.
\end{enumerate}
In each case, we develop a formula which will be satisfied by $M$ and $L$ exactly when $M$ is the bicircular matroid of $G$ and each element in $L$ is a loop in $G$.
We will then simply take the disjunction of these formulas, and we will be done.

For case (i), we will rely on the formula $\formula{Bicircular}_{\varphi}^{i}$ from Lemma \ref{lem_bicircularformula}.
In this case, we want $\varphi$ to be a formula with a single free variable that is not satisfied by any subset of $E$.
For specificity, let $\varphi[X']$ be $\ind[X']\land \neg\ind[X']$.
Now $M$ is the bicircular matroid of a graph with $i$ vertices (where $i\in \{1,2,3,4\}$) if and only if there exist sets $Y_{1},\ldots, Y_{i}$ such that $(M,\theta)$ satisfies $\formula{Bicircular}_{\varphi}^{i}[X_{1},\ldots, X_{i}]$ under the interpretation that takes each variable $X_{j}$ to $Y_{j}$.
So the formula we want for a particular value of $i$ is the conjunction of
\[\exists X_{1}\cdots\exists X_{i} \formula{Bicircular}_{\varphi}^{i}[X_{1},\ldots, X_{i}]
\]
with a formula asserting that each element in $X$ is a loop in the bicircular representation.
We can accomplish the last part by asserting that for any singleton subset $X''\subseteq X$, there is exactly one subset $X'''$ such that $X''\subseteq X'''$ and $\formula{Vertex}[X''']$ holds.
We then take the disjunction of the four formulas, one for each value of $i$.

Now we move to case (ii).
Let $C^{*}$ be a cocircuit of the matroid $M$.
Then $C^{*}$ is a non-separating cocircuit if and only if, for every pair of distinct elements, $e,f\notin C^{*}$, there is a circuit of $M$ that contains $e$ and $f$ and has an empty intersection with $C^{*}$.
From this it follows that there is an \mso\ formula, $\formula{NonSepCocircuit}[X']$, such that $(M,X'\mapsto Y')$ satisfies $\formula{NonSepCocircuit}$ if and only if $Y'$ is a non-separating cocircuit of $M$.
Let $\varphi$ stand for $\formula{NonSepCocircuit}$.

Lemma \ref{billybob} implies that in case (ii), $M$ is bicircular if and only if $M$ satisfies $\formula{Bicircular}_{\varphi}^{0}$, or there exist sets $Y_{1},\ldots, Y_{i}$ for $i\in\{1,2,3\}$ such that $(M,\theta)$ satisfies $\formula{Bicircular}_{\varphi}^{i}[X_{1},\ldots, X_{i}]$ where $\theta$ takes each $X_{j}$ to $Y_{j}$.
Moreover, if $\mathcal{F}(\varphi)$ is a graphical family, then $e\in E$ is a loop in $G(\mathcal{F}(\varphi))$ if and only if there is exactly one set that contains $e$ and satisfies $\varphi$.
Similarly, if $\mathcal{F} = \mathcal{F}(\varphi)\cup\{Y_{1},\ldots, Y_{i}\}$ is a graphical family, then $e\in E$ is a loop in $G(\mathcal{F})$ if and only if there is exactly one set that contains $e$ and either satisfies $\varphi$, or is equal to one of $Y_{1},\ldots, Y_{i}$.
Since this is clearly expressible in \mso, the formula we want for case (ii) can be constructed as the disjunction of $\formula{Bicircular}_{\varphi}^{0}$ with three other formulas, one for each value of $i$.

Case (iii) will fall to exactly the same argument, except that we must now use Lemma \ref{lem_goodcocircuits}, and show that there is an \mso\ formula, $\formula{GoodCocircuit}[X']$ such that $(M,X'\mapsto Y')$ satisfies $\formula{GoodCocircuit}$ if and only if $Y'$ is a good cocircuit in the matroid $M$.
To this end, we make the following observations.
Let $M$ be a matroid on the ground set $E$, and let $C^{*}$ be a cocircuit of $M$.
Then $C$ is a component of $M\ba C^{*}$ if and only if for every pair of distinct elements in $C$, there is a circuit of $M|C$ that contains both of them, and $C$ is a maximal subset of $E-C^{*}$ with respect to this property.
Thus we have an \mso-characterisation of components of $M\ba C^{*}$, and we can check that there is at most one such component that is not a singleton set.
Next we observe that $x$ is a coloop of $M\ba C^{*}$ if and only if every circuit of $M$ that contains $x$ also contains an element of $C^{*}$.
A set, $F$, is a flat of $M$ if and only if $I\cup z$ is independent, for every independent subset $I\subseteq F$ and every element $z\in E-F$.
Moreover, we can certainly characterise when $F$ is a union of circuits.
Thus cyclic flats have an \mso\dash characterisation.
From this we see that we can characterise when two elements are clones, by stating that a cyclic flat contains one if and only if it contains the other.
Now it follows easily that there is an \mso-characterisation of rank\dash $2$ clonal classes.
From this point, it is routine to verify that there is an \mso-characterisation of good cocircuits, as we claimed, and this completes the proof of the lemma.
\end{proof}

We note the following consequence of Theorem \ref{lem_3concase}: $(M,X\mapsto \emptyset)$ satisfies $\formula{BicircularLoops}$ if and only if $M$ is a $3$\dash connected bicircular matroid.
So at this point in our arguments, we have established an \mso-characterisation of $3$\dash connected bicircular matroids.

\section{Reducing to the 3-connected case} 
\label{Reduceto3}

This section is dedicated to using our \mso\dash characterisation of $3$\dash connected bicircular matroids to leverage a characterisation of the entire class.
We need to develop several technical tools.

\subsection*{Decompositions of connected matroids.}
Cunningham and Edmonds showed that a connected matroid can be decomposed by a canonical tree that displays all of its $2$\dash separations.
We now explain this work, following \cite{MR2849819}*{Section 8.3}.
Let $T$ be a tree and assume each node of $T$ is a matroid.
Let the edges of $T$ be labelled with the distinct elements $e_{1},\ldots, e_{t}$.
We impose the following conditions:
\begin{enumerate}[label=\textup{(\roman*)}]
\item if $T$ has more than one node, then each node $N$ satisfies $|E(N)|\geq 3$,
\item if nodes $N$ and $N'$ are not adjacent in $T$, then $E(N)\cap E(N')=\emptyset$, and
\item if nodes $N$ and $N'$ are joined by an edge, $e_{i}$, then $E(N)\cap E(N')=\{e_{i}\}$, where $e_{i}$ is not a separator in $N$ or $N'$
(in this case, we say that $e_{i}$ is a \emph{basepoint} of $N$ and $N'$).
\end{enumerate}
When these conditions hold, we say that $T$ is a \emph{matroid-labelled tree}.
We recursively describe a matroid, $M(T)$, associated with this tree.
If $T$ contains only a single node, $N$, then $M(T)=N$.
Otherwise assume that $T$ contains an edge $e$, joining nodes $N$ and $N'$.
We let $T/e$ be the tree obtained by contracting $e$, where the identified node now corresponds to the $2$\dash sum of $N$ and $N'$ along the basepoint $e$.
We recursively define $M(T)$ to be $M(T/e)$.
Since the $2$\dash sum operation is associative \cite{MR2849819}*{Proposition 7.1.23}, $M(T)$ is well-defined, and does not depend on the order in which we contract edges.
Note that when $N$ is a node, the intersection of $E(M(T))$ and $E(N)$ may well be empty (if every element in $E(N)$ is a basepoint).
However, if $N$ is a leaf of $T$, then the requirement that $E(N)$ has at least three elements means that $E(N)$ contains at least two elements in $E(M(T))$.
If $T^{*}$ is the tree obtained from $T$ by replacing each node with its dual, then $M(T^{*})=(M(T))^{*}$.

Let $T$ be a matroid-labelled tree, and let $M$ be $M(T)$.
Let $T'$ be a subgraph of $T$.
We define $E(T')$ to be $\cup_{N\in V(T')} E(N)\cap E(M)$.
Thus $E(T')$ contains no basepoint elements.

In the next theorem, when we refer to a circuit or a cocircuit, we mean the matroids in which the entire ground set is a circuit or a cocircuit, respectively.
The next result is due to Cunningham and Edmonds  \cite{MR586989} (see \cite{MR2849819}*{Theorem 8.3.10}).

\begin{theorem}
\label{CunninghamEdmonds}
Let $M$ be a connected matroid.
There is a unique (up to relabelling edges) matroid-labelled tree, $T$, with the following properties: $M=M(T)$, every node of $T$ is a $3$\dash connected matroid, a circuit, or a cocircuit, and furthermore, no edge of $T$ joins a circuit to a circuit, or a cocircuit to a cocircuit.
\end{theorem}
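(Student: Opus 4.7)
My plan is to prove the theorem in two halves: first existence, then uniqueness. Both will proceed by induction on $|E(M)|$.

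For existence, the base case covers small matroids and matroids that are already 3-connected, a circuit, or a cocircuit; each becomes a single-node tree $T$. For the inductive step, suppose $M$ is not 3-connected and not a circuit or cocircuit. Since $M$ is connected but not 3-connected, it admits a 2-separation, and hence can be written as a 2-sum $M = M_{1}\oplus_{2} M_{2}$ along some basepoint $e$, with $|E(M_{1})|,|E(M_{2})|\ge 3$ and $|E(M_{i})|<|E(M)|$ for $i=1,2$. Apply the inductive hypothesis to produce matroid-labelled trees $T_{1}$ and $T_{2}$ with $M_{i}=M(T_{i})$. The basepoint $e$ lies in $E(M_{i})$, so it is a non-basepoint element of exactly one node $N_{i}$ of $T_{i}$. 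Form $T$ by taking the disjoint union of $T_{1}$ and $T_{2}$ and joining $N_{1}$ to $N_{2}$ by an edge labelled $e$. The three conditions in the definition of a matroid-labelled tree are easily checked, and associativity of the 2-sum (assumed from the preceding discussion) gives $M(T)=M$.

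To arrange the final \emph{circuit-cocircuit alternation} condition, I would apply a reduction step: if $T$ has an edge joining two circuit nodes along a basepoint $e$, replace the pair by their 2-sum, which is again a circuit (the union of the two cycles minus $e$). Do the same for adjacent cocircuit nodes. This process strictly decreases the number of nodes, so it terminates in a tree satisfying the alternation condition, and the decomposition still yields $M$ by associativity. At this point every node is a 3-connected matroid, a circuit, or a cocircuit, with no forbidden adjacent pair.

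For uniqueness, the key lemma is a correspondence between non-trivial 2-separations of $M$ and edges of $T$. Concretely, for each edge $e$ of $T$ with components $T_{1},T_{2}$ of $T-e$, the partition $(E(T_{1}),E(T_{2}))$ is a 2-separation of $M$; conversely, for every 2-separation $(A,B)$ of $M$ one can locate either an edge or a node of $T$ that ``displays'' it (with the circuit and cocircuit nodes absorbing the 2-separations that cut them). I would prove this displaying property by induction along $T$ and by using the fact that a 3-connected matroid has no non-trivial 2-separations. Given this correspondence, suppose $T$ and $T'$ are two trees satisfying the conclusion for the same $M$. Pick a leaf node $N$ of $T$; its non-basepoint elements form one side of a 2-separation (or $T$ has one node, handled separately). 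Use the displaying property to locate the corresponding leaf in $T'$ and identify it with $N$, either as the same 3-connected matroid, the same circuit, or the same cocircuit (forcing agreement because the circuit/cocircuit nature at a leaf is determined by whether the complementary side is a cocircuit/circuit in the 2-separation, and the alternation rule rules out any ambiguity at the absorbed end). Then contract both leaves and induct.

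The hard part is unquestionably the uniqueness half, and within it the most delicate step is the displaying property together with ruling out different ways of grouping parallel/series elements between neighbouring nodes. This is exactly where the alternation condition ``no circuit-circuit or cocircuit-cocircuit edge'' does its work: without it one could re-split a circuit node into two smaller circuits along a new basepoint and obtain a different tree. The alternation condition pins down the canonical maximal aggregation, and proving that this aggregation is forced by $M$ alone — independent of the decomposition chosen — is the combinatorial crux.
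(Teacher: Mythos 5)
The paper does not prove this statement at all: it is quoted as a known theorem of Cunningham and Edmonds, with the proof deferred to \cite{MR586989} and to Theorem 8.3.10 of Oxley's book. So there is no in-paper argument to measure you against, and the real question is whether your sketch would stand on its own.

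Your existence half is the standard argument and is essentially sound: split along a $2$\dash separation using the fact (quoted in the paper's preliminaries) that a connected matroid with a $2$\dash separation is a $2$\dash sum of two smaller matroids each with at least three elements, recurse, glue the two trees along the basepoint, and then repeatedly merge adjacent circuit--circuit and cocircuit--cocircuit pairs (the $2$\dash sum of two circuits is a circuit, dually for cocircuits, and the node count strictly drops). The uniqueness half, however, contains a genuine gap: the step ``use the displaying property to locate the corresponding leaf in $T'$ and identify it with $N$'' assumes precisely what has to be proved. Knowing that the $2$\dash separation $(E(N)\cap E(M),\,\text{rest})$ of $M$ is displayed by \emph{some} edge or circuit/cocircuit node of $T'$ does not by itself produce a leaf of $T'$ with the same ground set and the same type; a priori the elements of a circuit leaf of $T$ could be distributed differently among the series/parallel structure of $T'$. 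What is needed is an intrinsic characterisation, in terms of $M$ alone, of which $2$\dash separating sets arise as $E(T_{1})$ for a subtree $T_{1}$, and a proof that the maximal aggregation of circuits (resp.\ cocircuits) enforced by the alternation condition is canonical. You correctly name this as ``the combinatorial crux,'' but you give no argument for it, and it is the entire content of the uniqueness assertion --- it occupies most of the several pages in Oxley's Section 8.3. As written, your uniqueness proof is a plan rather than a proof.
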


We refer to the unique matroid-labelled tree from Theorem \ref{CunninghamEdmonds} as the \emph{canonical decomposition tree} of $M$.
Let $T$ be the canonical decomposition tree for the connected matroid $M$.
The \emph{$3$\dash connected components} of $M$ (and $T$) are the nodes of $T$ with rank and corank at least two.
Note that any such matroid is necessarily simple, cosimple, and $3$\dash connected, and that a node of $T$ can be $3$\dash connected without being a $3$\dash connected component (since not every $3$\dash connected matroid has rank and corank at least two).
Let $e$ be an edge of $T$, and let the two connected components of $T\ba e$ be $T_{A}$ and $T_{B}$.
Let $A$ be $E(T_{A})$ and let $B$ be $E(T_{B})$.
Then $(A,B)$ is a partition of $E(M)$, and we say that $A$, $B$, and $(A,B)$ are \emph{displayed} by the edge $e$.
Now let $N$ be a node in $T$ and let $(A,B)$ be a partition of $E(M)$ such that whenever $T'$ is a connected component in the forest $T-N$, the set $E(T')$ is contained in one of $A$ or $B$.
Then we say that $A$, $B$, and $(A,B)$ are \emph{displayed} by the node $N$.
Note that a partition displayed by an edge is also displayed by any node incident with that edge.
The next result is Proposition 8.3.16 in \cite{MR2849819}.

\begin{proposition}
\label{treedisplayed}
Let $T$ be the canonical decomposition tree of a connected matroid $M$.
Let $(A,B)$ be a partition of $E(M)$ such that $|A|,|B|\geq 2$.
Then $(A,B)$ is a $2$\dash separation of $M$ if and only if it is displayed by an edge of $T$ or if it is a displayed by a circuit node or a cocircuit node.
\end{proposition}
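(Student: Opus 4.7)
The plan is to prove both directions separately, with the reverse direction by induction on the number of nodes in $T$.

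For the forward direction, I would first handle edge-displayed partitions. If $(A,B)$ is displayed by an edge $e$ joining nodes $N$ and $N'$, with $T_A$ and $T_B$ the components of $T-e$, then by associativity of the $2$-sum (cited earlier in the subsection) one has $M = M(T_A) \oplus_2 M(T_B)$ along $e$. Standard properties of the $2$-sum give $\lambda_M(A) \leq 1$, and since $|A|,|B|\geq 2$ by hypothesis, $(A,B)$ is a $2$-separation. For a partition displayed by a circuit (or cocircuit) node $N$, the sets $E(N)\cap(A\cup \mathrm{basepoints\ toward\ }A)$ and its complement form a non-trivial partition of $E(N)$. Every such partition of an $n$-circuit has $\lambda=1$ (the rank calculation yields $k+(n-k)-(n-1)=1$), dually for cocircuits. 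One then splits $N$ into two smaller circuits (resp.\ cocircuits) connected by a new basepoint, yielding a refined (non-canonical) matroid-labelled tree for the same $M$ in which $(A,B)$ is displayed by the new edge, reducing to the edge case.

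For the reverse direction I would induct on $|V(T)|$. The base case is immediate: if $T$ has one node then $M$ is either $3$-connected (with $|E|\geq 4$, so by definition admits no $2$-separation with both sides of size $\geq 2$), or $M$ is a circuit or cocircuit (in which case the single node trivially displays every partition). For the inductive step, pick any edge $e$ of $T$ joining $N_1$ to $N_2$, and let $T_1,T_2$ be the components of $T-e$, so $M=M_1\oplus_2 M_2$ with $M_i=M(T_i)$ and basepoint $e$. Given a $2$-separation $(A,B)$ of $M$, define $A_i = (A\cap E(M_i))\cup \{e\}$ (placing $e$ where needed) and $B_i = E(M_i)\setminus A_i$. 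A submodularity calculation shows $\lambda_{M_i}(A_i)\leq \lambda_M(A)\leq 1$. Three cases arise: if $A\subseteq E(T_1)$ or $A\subseteq E(T_2)$ (or symmetrically for $B$), then $(A,B)$ is displayed by $e$; if both sides meet both $E(T_1)$ and $E(T_2)$ nontrivially, then $(A_i,B_i)$ is a genuine $2$-separation of $M_i$ in the sense of the induction (with $|A_i|,|B_i|\geq 2$), so by the inductive hypothesis $(A_i,B_i)$ is displayed in $T_i$ by an edge or by a circuit/cocircuit node, and this same edge/node displays $(A,B)$ in $T$.

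The main obstacle is the bookkeeping around the basepoint $e$: one must verify that the refined partitions $(A_i,B_i)$ satisfy the size hypotheses and that the displaying object in $T_i$ continues to display $(A,B)$ in $T$. The subtlest point is the distinction between partitions displayed by an edge versus those displayed only by a circuit or cocircuit node; the hypothesis that no edge of $T$ joins two circuits or two cocircuits is needed exactly to prevent the displayed object from collapsing into a larger circuit/cocircuit that would spoil uniqueness. A further care-point is ensuring that when the inductive $2$-separation of $M_i$ has a side of size $2$ consisting of $e$ together with one other element, the resulting displayed object in $T$ is correctly either the adjacent edge or the node $N_i$ itself. Once these cases are enumerated, the argument closes neatly.
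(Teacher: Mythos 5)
The paper never proves this proposition: it is quoted directly from Oxley (Proposition 8.3.16 of \cite{MR2849819}), so there is no internal argument to compare yours against, and your proposal is an attempt at a proof the paper deliberately outsources. On its own terms, your forward direction is essentially sound: edge-displayed partitions follow from associativity of the $2$-sum, and for node-displayed partitions the device of splitting an $n$-element circuit (or cocircuit) into two smaller ones joined by a new basepoint correctly reduces to the edge case, provided you also treat the degenerate subcase where one side of the induced partition of $E(N)$ is a single basepoint (there the partition is already displayed by the corresponding edge, so no splitting is needed).

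The reverse direction, however, has two genuine gaps. First, the claim that ``$A\subseteq E(T_1)$ or $A\subseteq E(T_2)$ (or symmetrically for $B$) implies $(A,B)$ is displayed by $e$'' is false unless the containment is an equality. Take $T$ to be a star whose centre is a circuit node $N$ with ground set $\{e_1,e_2,e_3,x\}$ and whose leaves $N_1,N_2,N_3$ are $3$-connected; the $2$-separation with $A=(E(T_1))\cup\{x\}$, where $T_1$ hangs off $e_1$, satisfies $B\subseteq E(T_2)$ relative to the edge $e_1$ yet is displayed only by the circuit node $N$, not by $e_1$. The correct move here is to recurse into $T_2$ with the induced partition of $M_2$ and then verify that the displaying edge or node found there still displays $(A,B)$ in $T$, which works because the basepoint $e$ lies on the $A$-side of the induced partition. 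Second, in your crossing case (both $A$ and $B$ meeting both $E(T_1)$ and $E(T_2)$), the conclusion that a display of $(A_i,B_i)$ inside $T_i$ ``displays $(A,B)$ in $T$'' cannot hold: every partition displayed by an edge or node of $T_1$ places all of $E(T_2)$ on a single side, whereas here $B$ meets $E(T_2)$ by assumption. What is actually required is to show that the crossing case cannot occur for an edge of the \emph{canonical} tree: the two induced partitions, together with the degenerate subcases in which one induced side has only one element, force parallel or series pairs through the basepoint in both nodes incident with $e$, contradicting either the $3$-connectivity of those nodes or the rule that no edge of $T$ joins two circuit nodes or two cocircuit nodes. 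You gesture at this in your closing paragraph, but the argument as written does not close it, and this exclusion is the real content of the uniqueness half of the Cunningham--Edmonds theorem.
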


Let $M$ be a connected matroid, and let $A$ be a $2$\dash separating set.
Let $B$ be the complement $E(M)-A$.
If $X\subseteq B$ is maximal amongst the $2$\dash separating non-empty proper subsets of $B$, then we say that $X$ is a \emph{wedge} (relative to $A$).
Note that a wedge may be a singleton set.

\begin{proposition}
\label{vertexwedges}
Let $T$ be the canonical decomposition tree of the connected matroid $M$.
Let $N$ be a circuit or cocircuit node of $T$.
Let $(A,B)$ be a $2$\dash separation of $M$ displayed by $N$, where $T_{1},\ldots, T_{n}$ are the components of $T-N$ satisfying $E(T_{i})\subseteq B$.
Assume $n\geq 2$.
The wedges relative to $A$ are the sets of the form $B-E(T_{i})$ or $B-b$, where $b$ is in $B\cap E(N)$.
\end{proposition}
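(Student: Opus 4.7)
The plan is to use Proposition \ref{treedisplayed} to classify the $2$-separating proper non-empty subsets of $B$ and then extract the maximal ones. Throughout, let $E_B$ denote $B\cap E(N)$, so $B$ decomposes as the disjoint union $E(T_1)\cup\cdots\cup E(T_n)\cup E_B$.

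First I would verify that each candidate wedge is $2$-separating. The partition $(B-E(T_i),\,A\cup E(T_i))$ is displayed by $N$: send every branch $T_k$ with $k\ne i$ to the $B$-side, send $T_i$ together with the $A$-side branches to the other side, and distribute the elements of $E(N)\cap E(M)$ according to whether they lie in $E_B$. Since $N$ is a circuit or cocircuit node, Proposition \ref{treedisplayed} gives a $2$-separation of $M$, so $B-E(T_i)$ is $2$-separating; the same reasoning handles $B-b$ for $b\in E_B$. Maximality of $B-b$ is immediate, since its only strict superset in $B$ is $B$ itself. Maximality of $B-E(T_i)$ requires ruling out a strict superset $X'\subsetneq B$ that is $2$-separating; such an $X'$ would split $E(T_i)$ properly, and the case analysis below shows this is impossible.

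For the converse, I would take an arbitrary wedge $X\subsetneq B$ and apply Proposition \ref{treedisplayed} to $(X,E(M)-X)$. Three cases arise: (i) edge-displayed, forcing $X=E(T_k)$ for some $B$-branch; (ii) displayed by $N$ itself, realising $X$ as a union of some $B$-branches together with some elements of $E_B$; and (iii) displayed by a node $N''\ne N$ sitting inside some branch $T_k$, which forces $X\subseteq E(T_k)$ because the component of $T-N''$ containing $N$ must lie on the $A$-side (since it contains all of $A$) and already contains every $E(T_i)$ with $i\ne k$ and every element of $E_B$. Maximality of $X$ then pins it down: cases (i) and (iii) give $X\subseteq E(T_k)$, which is strictly contained in the $2$-separating set $B-E(T_j)$ for any $j\ne k$ (using $n\ge 2$), so $X$ is not maximal unless it already coincides with some $B-E(T_j)$; and in case (ii), maximality forces $B-X$ to be minimal, so it is either a single $E(T_i)$ (giving $X=B-E(T_i)$) or a single element $b\in E_B$ (giving $X=B-b$).

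The main technical obstacle is the argument in case (iii), where one must carefully trace how the components of $T-N''$ interact with $N$ and with the partition $(A,B)$. The crucial ingredient is non-emptiness of every $E(T_i)$: this follows from the size condition $|E(N')|\ge 3$ for each node $N'$ when $T$ has more than one node, and it ensures that the strict inclusion $X\subsetneq B-E(T_j)$ really does contradict maximality of $X$.
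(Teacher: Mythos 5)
Your proposal is correct and follows essentially the same route as the paper: show each candidate set is displayed by $N$ (hence $2$\dash separating by Proposition \ref{treedisplayed}), then classify an arbitrary wedge according to whether it is displayed by an edge, by $N$, or by another node, and use maximality to pin it down. The only points needing polish are ones the paper also dispatches in a phrase: singleton wedges must be excluded before Proposition \ref{treedisplayed} can be invoked (they lie properly inside some $B-E(T_j)$, using $|E(T_j)|\geq 2$), and displays by edges or nodes on the $A$\dash side of $N$ must be explicitly ruled out since they would force $X$ to contain $B$ or to miss $B$ entirely.
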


\begin{proof}
Certainly $B-b$ is $2$\dash separating when $b$ is in $B\cap E(N)$, for it is displayed by $N$.
So $B-b$ is obviously a wedge.
Similarly $B-E(T_{i})$ is $2$\dash separating.
Assume that $B-E(T_{i})$ is not a wedge, so that it is properly contained in a wedge $X$.
From $n\geq 2$ we deduce $|X|\geq 2$, so Proposition \ref{treedisplayed} implies that $X$ is displayed by an edge, or by a circuit or cocircuit node.
But $X$ contains at least some elements of $E(T_{i})$.
On the other hand, $E(T_{i})\nsubseteq X$, for otherwise $X=B$.
Thus $X$ is displayed by an edge or node in $T_{i}$.
As $n\geq 2$, we see that $X$ contains $E(T_{j})$ for some $j\ne i$.
We can now deduce that $X$ must contain $A$, which is impossible.
So $B-E(T_{i})$ is a wedge, as desired.

Now let $X$ be a wedge relative to $A$ that is not equal to $B-E(T_{i})$ for any $i$ or to $B-b$ for any $b\in B\cap E(N)$.
Since $X$ is not properly contained in a wedge, it follows that $X$ is not a singleton set.
Therefore we can apply Proposition \ref{treedisplayed}.
Furthermore, $X$ must contain $B\cap E(N)$.
If $X$ is displayed by an edge incident with $N$, then it contains either $A$ or $B$, which is impossible.
If $X$ is displayed by any edge or node of $T_{i}$, then either it contains $A$, or it is contained in $B-E(T_{j})$ when $j\ne i$.
In either case we have a contradiction.
Since $X$ does not contain $B$, the only remaining possibility is that $X$ is displayed by $N$.
But $X$ is not contained in $B-E(T_{i})$ for any $i$, so we are forced to conclude that $X=B$, a contradiction.
\end{proof}

When $X$ and $Y$ are disjoint subsets of $E(M)$, we say that $X$ and $Y$ are \emph{skew} (in $M$) if no circuit of $M|(X\cup Y)$ contains elements of both $X$ and $Y$.
If $X$ and $Y$ are skew in $M^{*}$, then we say that they are \emph{coskew} in $M$.

\begin{proposition}
\label{characterisingcomponents}
Let $T$ be the canonical decomposition tree of the connected matroid $M$.
Let $(A,B)$ be a $2$\dash separation of $M$.
Assume that the following conditions hold:
\begin{enumerate}[label=\textup{(\roman*)}]
\item $\cl(A)\cap B = \emptyset = \cl^{*}(A)\cap B$,
\item any two distinct wedges relative to $A$ are disjoint, skew, and coskew.
\end{enumerate}
Then $(A,B)$ is displayed by an edge $e$ in $T$, and if $T_{B}$ is the component of $T\ba e$ such that $B = E(T_{B})$, then the node of $T_{B}$ incident with $e$ is a $3$\dash connected component of $M$.
\end{proposition}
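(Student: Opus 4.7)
My plan is to apply Proposition \ref{treedisplayed} and then to argue in two stages, showing both that $(A,B)$ is displayed by an edge and that the relevant node of $T_B$ is a 3-connected component. Throughout I use that both conditions (i) and (ii) are self-dual under $M \leftrightarrow M^*$ (the 2-separations, wedges, and the pair skew/coskew are preserved, while $\cl$ and $\cl^*$ swap), so I can reduce any cocircuit-node case to a circuit-node case by dualising.

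\emph{Stage 1: $(A,B)$ is displayed by an edge.} By Proposition \ref{treedisplayed}, if not then $(A,B)$ is displayed by a circuit or cocircuit node, which by duality I take to be a circuit node $N$. Let $m_B = |B \cap E(N) \cap E(M)|$ and let $n_B$ be the number of components of $T-N$ with edge set contained in $B$; similarly define $m_A, n_A$. Failing to be displayed by any edge of $T$ forces $(n_A, m_A) \neq (1,0)$ and $(n_B, m_B) \neq (1,0)$. If $m_B \geq 1$, I pick $x \in B \cap E(N) \cap E(M)$ and some $y \in E(N) - x$ lying on the $A$ side (either a basepoint of an $A$-side edge of $T$ or a real element in $A \cap E(N)$, one of which exists because $|A|\geq 2$); since the circuit matroid $N = U_{k-1,k}$ has every 2-subset of $E(N)$ as a cocircuit, $\{x,y\}$ is a cocircuit of $N$. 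Iterating the 2-sum cocircuit rule $(C^*-e) \cup ({C'}^* - e)$ at each basepoint of $N$ extends this to a cocircuit of $M$ of the form $\{x\} \cup S$ with $S \subseteq A$, whence $x \in \cl^*(A) \cap B$, contradicting (i). If $m_B = 0$, then $n_B \geq 2$; by Proposition \ref{vertexwedges} the wedges relative to $A$ are exactly $B - E(T_{B,1}), \ldots, B - E(T_{B,n_B})$. If $n_B \geq 3$, any two such wedges intersect in $E(T_{B,k})$ for some third index $k$, contradicting disjointness; if $n_B = 2$, then $\{e_{B,1}, e_{B,2}\}$ is a cocircuit of $N$ that the same iterated 2-sum rule carries to a cocircuit of $M$ lying in $E(T_{B,1}) \cup E(T_{B,2}) \subseteq B$ and meeting both wedges, so they are not coskew. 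Either way (ii) fails.

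\emph{Stage 2: the node at $e$ is a 3-connected component.} Now let $e$ display $(A,B)$, let $T_B$ be as stated, and let $N$ be the node of $T_B$ incident with $e$. By Theorem \ref{CunninghamEdmonds}, $N$ is 3-connected, a circuit, or a cocircuit. Suppose $N$ is a circuit (the cocircuit case is dual). If $E(N) \cap E(M) \cap B$ contains a real element $x$, then $\{x, e\}$ is a cocircuit of $N$, and the same iterated cocircuit-extension argument produces a cocircuit of $M$ contained in $A \cup \{x\}$, contradicting (i). Otherwise $E(N) \cap E(M) = \emptyset$, so $N$ consists only of basepoints; since $|E(N)| \geq 3$ and one basepoint is $e$, at least two other basepoints attach subtrees of $T_B$ to $N$, and the wedge-disjointness or wedge-coskew argument from Stage 1 contradicts (ii). Hence $N$ is 3-connected. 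Finally, a 3-connected node with $|E(N)| \geq 3$ and either rank or corank equal to $1$ must be $U_{1,k}$ (a cocircuit) or $U_{k-1,k}$ (a circuit), both of which I have just ruled out; so $N$ has rank and corank at least $2$ and is a 3-connected component of $M$.

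The main technical point is the iterated cocircuit-extension through the 2-sum: one must verify at each basepoint of $N$ that the current 2-element cocircuit is either unchanged (when it avoids the basepoint) or correctly glued via the 2-sum cocircuit rule (when it contains the basepoint), and that at the end the resulting cocircuit of $M$ lies in $A \cup \{x\}$ or inside $E(T_{B,1}) \cup E(T_{B,2})$ as claimed. The key ingredients are $(M_1 \oplus_2 M_2)^* = M_1^* \oplus_2 M_2^*$ together with the fact that in a $k$-element circuit every 2-subset is a cocircuit. Once that bookkeeping is done, the disjointness/coskew contradictions and the duality reducing the cocircuit-node case to the circuit-node case make the remainder routine.
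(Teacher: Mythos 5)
Your proof is correct and follows essentially the same route as the paper's: reduce via Proposition~\ref{treedisplayed} to a circuit or cocircuit node, glue circuits/cocircuits through the $2$\dash sums to violate condition (i) when $E(N)\cap B\neq\emptyset$, and use Proposition~\ref{vertexwedges} together with the disjoint/skew/coskew conditions on wedges to rule out two or more attached subtrees, finishing with the same rank/corank observation. The only differences are organisational: you dualise so as to work exclusively with circuit nodes and split on $|B\cap E(N)|$ rather than on the number of $B$\dash side subtrees.
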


\begin{proof}
Let us assume for a contradiction that $(A,B)$ is not displayed by an edge.
Proposition \ref{treedisplayed} implies that it is displayed by a circuit or cocircuit node $N$.
Let $T_{1},\ldots, T_{n}$ be the components of $T-N$ such that $E(T_{i})\subseteq B$ for each $i$.
For each $i$ let $e_{i}$ be the edge of $T$ joining $N$ to $T_{i}$.

If $n=0$, then $B\subseteq E(N)$, so $B$ is a subset of a parallel or series class.
Since $M$ is connected, it is simple to verify that $B$ is contained in $\cl(A)$ or $\cl^{*}(A)$, contrary to hypothesis.
So $n\geq 1$.

Assume that $n=1$.
If $E(N)$ contains no element of $B$, then $(A,B)$ is displayed by $e_{1}$, contrary to our hypothesis.
So let $b$ be an element in $B\cap E(N)$.
Assume that $A\cap E(N)$ contains an element $a$.
Then $\{a,b\}$ is a circuit or a cocircuit in $N$, and also in $M$.
We see that $b$ is in either $\cl(A)$ or $\cl^{*}(A)$, contradicting condition (i).
Therefore $A\cap E(N) = \emptyset$.
This means that $N$ is not a leaf in $T$, for otherwise $A$ is empty.
We let $e$ be any edge other than $e_{1}$ incident with $N$, and let $N'$ be the other node incident with $e$.
Then $M$ can be expressed as $M_{1}\oplus_{2} M_{2}$, where the $2$\dash sum is taken along the basepoint $e$, and $e$ displays the $2$\dash separation $(E(M_{1})-e, E(M_{2})-e)$.
We assume that $b$ is in $E(M_{2})$.
If $N$ is a cocircuit, then $\{b,e\}$ is a circuit of $M_{2}$.
We let $C$ be a circuit of $M_{1}$ that contains $e$.
Then $(C-e)\cup b$ is a circuit of $M$, so $b$ is in $\cl(A)$, and we have a contradiction to condition (i).
If $N$ is a circuit node, then we similarly deduce that $b$ is in $\cl^{*}(A)$.
Therefore we have to conclude that $n\geq 2$.

Proposition \ref{vertexwedges} implies that $B-E(T_{i})$ is a wedge relative to $A$ for each $i$.
In particular, $B-E(T_{1})$ and $B-E(T_{2})$ are distinct wedges, and are therefore disjoint.
From this we deduce $E(N)\cap B=\emptyset$ and $n=2$, for otherwise $(B-E(T_{1}))\cap (B-E(T_{2}))$ contains an element in $E(N)\cap B$ or $E(T_{3})$.
Now it follows that the wedges relative to $A$ are $E(T_{1})$ and $E(T_{2})$.
We express $M$ as $M_{0}\oplus_{2} M_{1}\oplus_{2} M_{2}$, where $E(M_{0}) = A\cup\{e_{1},e_{2}\}$ and
$E(M_{i}) = E(T_{i})\cup e_{i}$ for $i=1,2$.
Assume $N$ is a cocircuit node.
Note that $\{e_{1},e_{2}\}$ is a circuit in $M_{0}$.
For $i=1,2$, we let $C_{i}$ be a circuit of $M_{i}$ that contains $e_{i}$.
Now $(C_{1}-e_{1})\cup (C_{2}-e_{2})$ is a circuit of $M$, but this contradicts the condition that $E(T_{1})$ and $E(T_{2})$ are skew.
If $N$ is a circuit node, then we can deduce that $E(T_{1})$ and $E(T_{2})$ are not coskew.
This contradiction shows that $(A,B)$ is displayed by an edge, $e$, of $T$.

Let $T_{B}$ be the component of $T\backslash e$ such that $B=E(T_{B})$, and let $N$ be the node of $T_{B}$ that is incident with $e$.
We can complete the proof by showing that $N$ has rank and corank at least two.
Express $M$ as the $2$\dash sum $M_{A}\oplus_{2} M_{B}$ along the basepoint $e$, where $E(M_{B})=E(T_{B})\cup e$.
Assume $r(N)<2$ or $r^{*}(N)<2$, so that $N$ is either a circuit or a cocircuit.
By duality, we can assume it is a cocircuit.
Assume that $B\cap E(N)$ contains an element $b$, which is therefore parallel to $e$ in $N$.
If $C$ is a circuit of $M_{A}$ containing $e$, then $(C-e)\cup b$ is a circuit of $M$, implying that $b$ is in $\cl(A)$.
As this is a contradiction, we deduce that $B\cap E(N)=\emptyset$, so every element in $E(N)$ is a basepoint.
Since $|E(N)|\geq 3$, it therefore follows that $N$ has degree at least three in $T$.
Let $e_{1},\ldots, e_{n}$ be the edges of $T_{B}$ incident with $N$, and note $n\geq 2$.
For each $i$, let $T_{i}$ be the component of $T\ba e_{i}$ that does not contain $N$, so that $B$ is the disjoint union of $E(T_{1}),\ldots, E(T_{n})$.

If $n\geq 3$, then Proposition \ref{vertexwedges} implies that $B$ contains distinct non-disjoint wedges.
Therefore $n=2$ and $E(T_{1})$ and $E(T_{2})$ are the wedges relative to $A$.
Furthermore, there is a circuit of $M$ contained in $E(T_{1})\cup E(T_{2})$ that contains elements from both $E(T_{1})$ and $E(T_{2})$.
Since this contradicts condition (ii), we have completed the proof.
\end{proof}

A converse result also holds.

\begin{proposition}
\label{conversecomponents}
Let $T$ be the canonical decomposition tree of the connected matroid $M$.
Let $N$ be a $3$\dash connected component of $M$, and let $e$ be an edge of $T$ incident with $N$.
Let $T_{B}$ be the component of $T\ba e$ that contains $N$, and let $T_{A}$ be the other component.
Set $(A,B)$ to be $(E(T_{A}), E(T_{B}))$.
Let $e_{1},\ldots, e_{n}$ be the edges of $T_{B}$ incident with $N$, and for each $i$ let $T_{i}$ be the component of $T_{B}\ba e_{i}$ that does not contain $N$.
The wedges relative to $A$ are $E(T_{1}),\ldots, E(T_{n})$ and the singleton subsets of $E(N)-\{e,e_{1},\ldots, e_{n}\}$.
Moreover,
\begin{enumerate}[label=\textup{(\roman*)}]
\item $\cl(A)\cap B = \emptyset = \cl^{*}(A)\cap B$,
\item any two distinct wedges relative to $A$ are disjoint, skew, and coskew.
\end{enumerate}
\end{proposition}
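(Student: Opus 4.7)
The plan is to classify the $2$-separating subsets of $B$ directly from Proposition \ref{treedisplayed}, and then verify (i) and (ii) by tracing circuits and cocircuits through an iterated $2$-sum decomposition of $M$ along the edges of $T$. Throughout, I use that $N$, being a $3$-connected component of rank and corank at least $2$, has $|E(N)| \geq 4$ and is therefore both simple and cosimple.

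For the wedge characterisation, each $E(T_i)$ is $2$-separating (displayed by $e_i$), and every singleton is trivially so. For maximality I take a $2$-separating, proper, non-empty $W \subseteq B$ with $|W| \geq 2$; by Proposition \ref{treedisplayed} it is displayed by an edge $f$ or by a circuit/cocircuit node $N'$, and since $N$ itself is neither a circuit nor a cocircuit we have $N' \neq N$. A short case split on the position of $f$ or $N'$ in $T$, together with the constraint $W \subseteq B$, forces $W \subseteq E(T_i)$ for some $i$; the same analysis shows each singleton $\{b\}$ with $b \in B \cap E(N)$ is already maximal. Hence the wedges are exactly the sets listed in the statement.

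Decompose $M = M_A \oplus_2 M_B$ at $e$, with $M_B = M(T_B)$. From the circuit structure of the $2$-sum, $b \in \cl(A) \cap B$ forces $\{e,b\}$ to be a circuit of $M_B$; a dual statement handles $\cl^*(A)$. For (i), I would induct on $|V(T_B)|$, peeling off $M_B$ at successive basepoints $e_k$ using the fact that a basepoint is never a loop or coloop of either summand, to show that a $2$-element circuit (respectively cocircuit) of $M_B$ lying among the non-basepoint elements of $N$ must already be a circuit (respectively cocircuit) of $N$; when $b \in E(T_i)$, one extra reduction across $e_i$ produces instead the $2$-element circuit $\{e, e_i\}$ inside $N$. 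Since $N$ is simple and cosimple, no such $2$-element circuit or cocircuit exists, proving (i). For skewness in (ii), I apply the same method to a hypothetical circuit $C$ of $M$ inside $W_1 \cup W_2$ meeting both sides, iteratively peeling off those subtrees $T_k$ that meet $C$; associativity of $\oplus_2$ reduces the question to whether $N$ has a $2$-element circuit whose elements are drawn from $B \cap E(N)$ and from $\{e_1, \ldots, e_n\}$ according to which wedges contain $C$'s elements. Disjointness of wedges is immediate, and coskewness follows by dualising in $M^*$, whose canonical tree is $T^*$ with corresponding node $N^*$ still $3$-connected and with $|E(N^*)| \geq 4$.

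The main technical obstacle is the bookkeeping around iterated $2$-sum decomposition: I would need a clean reduction lemma saying that a small circuit of $M$ whose $E(M)$-support avoids $E(P)$ for an intermediate matroid $P$ in the decomposition corresponds to an equally small circuit of $N$. This is a routine but delicate consequence of the fact that circuits of $M_1 \oplus_2 M_2$ at basepoint $p$ are either circuits of some $M_i$ avoiding $p$ or of the form $(C_1 - p) \cup (C_2 - p)$ for circuits $C_i \in \mathcal{C}(M_i)$ containing $p$; marshalling this across several basepoints at once is where care is required.
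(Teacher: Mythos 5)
Your proposal is correct and follows essentially the same route as the paper: the wedges are classified by applying Proposition \ref{treedisplayed} and a case analysis on where the displaying edge or node sits relative to $T_B$, and conditions (i) and (ii) are verified by tracing circuits and cocircuits through the $2$-sum decomposition until a violation forces a $2$-element circuit or cocircuit of $N$, contradicting the fact that a $3$-connected component is simple and cosimple. The paper compresses the ``iterated peeling'' you flag as the main technical obstacle into a one-line deduction, but the underlying reduction is the one you describe.
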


\begin{proof}
Let $x$ be an element of $E(N)-\{e,e_{1},\ldots, e_{t}\}$.
Then $\{x\}$ is a $2$\dash separating subset of $B$.
Assume that $X$ is a $2$\dash separating proper subset of $B$ that properly contains $\{x\}$.
Then $|X|\geq 2$, and Proposition \ref{treedisplayed} tells us that $X$ is displayed by an edge of $T$, or by a circuit or cocircuit node in $T$.
Because $N$ is not a circuit or cocircuit node it cannot display any $2$\dash separations.
We observe that $X$ cannot be displayed by an edge or a node in $T_{B}$, for then $x\in X$ implies $A\subseteq X$, which is not true.
On the other hand, if $X$ is displayed by an edge or a node not in $T_{B}$, then $X\supseteq B$, which is also not true.
So we have shown that $\{x\}$ is a wedge relative to $A$.

Now assume that $E(T_{i})$ is properly contained in $X$, a $2$\dash separating proper subset of $B$.
Then $X$ is displayed by an edge in $T$ or by a circuit or cocircuit node.
But such an edge or node cannot be in $T_{i}$, for then $X$ could not properly contain $E(T_{i})$.
Similarly, if $X$ is displayed by $e_{i}$, then $X=E(T_{i})$, a contradiction.
If $X$ is displayed by any other edge or node of $T_{B}$, then $X$ contains $A$, which is impossible.
If $X$ is displayed by an edge or node not in $T_{B}$, then it contains $B$, which is also impossible.

We have shown that  $E(T_{1}),\ldots, E(T_{n})$ and the singleton subsets of $E(N)-\{e,e_{1},\ldots, e_{n}\}$ are all wedges relative to $A$.
These sets partition $B$.
Let $X$ be a wedge that is not equal to any of these sets.
Since no wedge can properly contain another, we see that $X$ contains no element from $E(N)-\{e,e_{1},\ldots, e_{n}\}$ and that $X$ contains at least two elements.
This implies $n\geq 1$, and by relabelling as appropriate, we can assume that $X$ contains a non-empty proper subset of $E(T_{1})$.
Then $X$ must be displayed by an edge or node in $T_{1}$, for otherwise $X$ contains all of $E(T_{1})$.
But now $X$ is either a subset of $E(T_{1})$, or it contains $A$.
In either case, we have a contradiction, so the wedges relative to $A$ are exactly $E(T_{1}),\ldots, E(T_{n})$ and the singleton subsets of $E(N)-\{e,e_{1},\ldots, e_{n}\}$.

For the second part of the proof, we express $M$ as the $2$\dash sum of $M_{A}$ and $M_{B}$ along the basepoint $e$, where $E(M_{A}) = A\cup e$ and $E(M_{B}) = B\cup e$.
Assume that $b$ is in $\cl(A)\cap B$.
Then there is a circuit of $M$ contained in $A\cup b$ that contains $b$.
This implies that $\{b,e\}$ is a circuit in $M_{B}$ and hence in $N$.
But this is impossible as $N$ is simple and cosimple.
We similarly derive a contradiction if there is an element in $\cl^{*}(A)\cap B$.
Hence condition (i) holds.

The first part of the proof shows that distinct wedges relative to $A$ are disjoint.
Since $N$ is simple and cosimple, any pair of singleton wedges are skew and coskew.
If the wedges $E(T_{i})$ and $\{x\}$ fail to be skew and coskew, then as in the previous paragraph, we deduce that $\{e_{i},x\}$ is a circuit or cocircuit of $N$, which is impossible.
Similarly, if $E(T_{i})$ and $E(T_{j})$ fail to be skew and coskew, then $\{e_{i},e_{j}\}$ is a circuit or a cocircuit of $N$.
This contradiction completes the proof.
\end{proof}

\begin{proposition}
\label{degreethreecircuits}
Let $T$ be the canonical decomposition tree for the connected matroid $M$.
Then $T$ has a circuit node with degree at least three if and only if $M$ has a $2$\dash separation $(A,B)$ such that there are wedges $B_{1}$ and $B_{2}$ relative to $A$ satisfying:
\begin{enumerate}[label=\textup{(\roman*)}]
\item $B_{1}\cup B_{2}=B$,
\item $B-B_{1}$ and $B-B_{2}$ each contain at least two elements, and are not coskew, and
\item $A\cap \cl^{*}(B_{1}) = \emptyset$.
\end{enumerate}
\end{proposition}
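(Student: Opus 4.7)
The plan is to prove both directions by analysing $(A,B)$ through the canonical decomposition tree $T$, using Propositions~\ref{treedisplayed}, \ref{vertexwedges}, and~\ref{conversecomponents} together with a cocircuit calculus for the $2$\dash sum.

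For the forward direction, let $N$ be a circuit node of $T$ with degree $d\geq 3$, attached via basepoints $e_{1},\ldots,e_{d}$ to components $T_{1}',\ldots,T_{d}'$ of $T-N$. I would set $A=E(T_{1}')$ and $B=E(M)-A$ (a $2$\dash separation displayed by $e_{1}$ and by $N$), and take $B_{1}=B-E(T_{2}')$, $B_{2}=B-E(T_{3}')$; these are wedges relative to $A$ by Proposition~\ref{vertexwedges}. Condition~(i) is immediate from the disjointness of $T_{2}'$ and $T_{3}'$, and the size half of~(ii) follows because $|E(T_{i}')|\geq 2$ for each node in the canonical tree. For ``not coskew'' I would lift the two-element cocircuit $\{e_{2},e_{3}\}$ of the circuit matroid $N=U_{d-1,d}$ through the $2$\dash sums along $e_{2}$ and $e_{3}$ to obtain a cocircuit of $M$ of the form $(C_{2}^{*}-e_{2})\cup(C_{3}^{*}-e_{3})$ contained in $E(T_{2}')\cup E(T_{3}')$ and meeting both parts. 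The delicate point is condition~(iii): writing $M=M_{A}\oplus_{2}M_{B}$ along $e_{1}$ and inspecting the cocircuit structure of a $2$\dash sum, one sees that $a\in A\cap\cl^{*}(B_{1})$ forces $\{e_{1},a\}$ to be a $2$\dash cocircuit of $M_{A}$. But by the Cunningham--Edmonds alternation the neighbour of $N$ in $T_{1}'$ is either a cocircuit node $U_{1,k}$ (whose unique cocircuit has size $k\geq 3$) or a $3$\dash connected component (which has no $2$\dash cocircuits), and a short induction on the $2$\dash sum structure then shows every cocircuit of $M_{A}$ through $e_{1}$ has size at least three.

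For the backward direction, Proposition~\ref{treedisplayed} leaves three cases for how $(A,B)$ is displayed. The size half of~(ii) together with Proposition~\ref{vertexwedges} rules out wedges of the form $B-b$, forcing $B_{1}=B-E(T_{i}')$ and $B_{2}=B-E(T_{j}')$ for distinct components $T_{i}',T_{j}'$ on the $B$ side. If $(A,B)$ were displayed only by an edge joining two $3$\dash connected nodes, Proposition~\ref{conversecomponents} would give only small wedges, and a counting argument using $|E(N_{2})|\geq 4$ rules out~(i). If displayed by a cocircuit node, the dual of the forward ``not coskew'' computation shows $E(T_{i}'),E(T_{j}')$ are coskew in $M$, contradicting~(ii). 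So the displaying node is a circuit node $N$ with at least two components on the $B$ side. Finally, condition~(iii) forces $N$ to have a component on the $A$ side too: otherwise $A\subseteq E(N)\cap E(M)$, and for any $a\in A$ the $2$\dash cocircuit $\{a,e_{j}\}$ of $N$, with $j\neq i$, lifts to a cocircuit of $M$ inside $\{a\}\cup B_{1}$, placing $a\in\cl^{*}(B_{1})$.

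The main obstacle will be the cocircuit calculus underpinning both directions, especially the size-three lower bound for cocircuits of $M_{A}$ through $e_{1}$ in the forward direction, which relies essentially on the Cunningham--Edmonds alternation of node types in $T$.
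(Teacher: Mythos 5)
Your forward direction is sound and matches the paper's: the same choice of $A=E(T_{1}')$, the same wedges $B-E(T_{2}')$, $B-E(T_{3}')$ via Proposition~\ref{vertexwedges}, and the same lifting of the cocircuit $\{e_{2},e_{3}\}$ of the circuit node through the $2$\dash sums to witness ``not coskew''. Your handling of condition~(iii) is genuinely different: you reduce $a\in A\cap\cl^{*}(B_{1})$ to a two-element cocircuit $\{e_{1},a\}$ of $M_{A}$ and kill it with the circuit/cocircuit alternation (the node of $T_{1}'$ containing $e_{1}$ is a cocircuit node or a cosimple $3$\dash connected component, so every cocircuit of $M_{A}$ through $e_{1}$ has at least three elements). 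That argument is correct and arguably more self-contained than the paper's, which instead notes that $B_{1}\cup a$ would be a $2$\dash separating set with both sides of size at least two, hence displayed by an edge or node of $T$, and then shows no edge or node can display it.

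The gap is in the converse. Your identification of $B_{1}=B-E(T_{i}')$ and $B_{2}=B-E(T_{j}')$ rests on Proposition~\ref{vertexwedges}, which only applies when the displaying circuit or cocircuit node $N$ has at least two components of $T-N$ on the $B$ side. You never treat the cases with at most one such component, and there the wedge structure is entirely different, so the conclusion ``$B_{1}$ and $B_{2}$ are complements of tree-components'' is unjustified. Concretely: if no component of $T-N$ lies in $B$, then $B\subseteq E(N)$ sits inside a series or parallel class and $B_{1}$ has the form $B-b$, so $B-B_{1}$ is a singleton, contradicting~(ii); if exactly one component $T_{1}$ lies in $B$ and $B\cap E(N)\neq\emptyset$, the wedges are $B\cap E(N)$ and the sets $B-b$ for $b\in B\cap E(N)$, and since~(i) forces $B_{1}\neq B_{2}$ one of $B-B_{1}$, $B-B_{2}$ is again a singleton; if exactly one component lies in $B$ and $B\cap E(N)=\emptyset$, then $(A,B)$ is displayed by the edge $e_{1}$ and hence by the adjacent node $N_{1}$, and the whole case analysis must be iterated there (terminating because a degree-two node all of whose elements are basepoints would violate $|E(N_{1})|\geq 3$). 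The paper spends roughly half of its converse on exactly these branches, so they are not routine bookkeeping. A smaller point: your first case should read ``the node on the $B$ side of the displaying edge is a $3$\dash connected component'' rather than ``an edge joining two $3$\dash connected nodes'', since the $A$\dash side endpoint may be a circuit or cocircuit node; your counting argument only uses the $B$\dash side node, so this is cosmetic.
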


\begin{proof}
We start by proving the ``only if" direction.
Assume $N$ is a circuit node and that $e_{0},\ldots, e_{n}$ are the edges incident with $N$, where $n\geq 2$.
For $i=0,1,\ldots, n$, let $T_{i}$ be the component of $T\ba e_{i}$ that does not contain $N$.
Let $A=E(T_{0})$, and let $B$ be $E(M) - A$.
Proposition \ref{vertexwedges} tells us that $B_{1}=B-E(T_{1})$ and $B_{2}=B-E(T_{2})$ are wedges relative to $A$.
Then $B_{1}\cup B_{2}=B$.
Moreover, $B-B_{1}=E(T_{1})$ and $B-B_{2}=E(T_{2})$ each contain at least two elements.
Express $M$ as $M_{0}\oplus_{2} M_{1}\oplus_{2} M_{2}$, where $E(M_{i}) = E(T_{i})\cup e_{i}$ for $i=1,2$.
For $i=1,2$, let $C^{*}_{i}$ be a cocircuit of $M_{i}$ that contains $e_{i}$.
As $\{e_{1},e_{2}\}$ is a cocircuit of $N$, and of $M_{0}$, it follows that $(C^{*}_{1}-e_{1})\cup (C^{*}_{2}-e_{2})$ is a cocircuit of $M$.
Thus $B-B_{1}$ and $B-B_{2}$ are not coskew.
Finally, we assume that $a\in A$ is in $\cl^{*}(B_{1})$.
Then $B_{1}\cup a$ is $2$\dash separating, and both $B_{1}\cup a$ and its complement contain at least two elements.
Therefore $B_{1}\cup a$ is displayed by an edge or node of $T$.
This edge or node must be in $T_{0}$, or else it cannot display both $B_{1}$ and $a$.
But now any such displayed set contains all of $B_{2}$ as well as $B_{1}$, and we have a contradiction.

Next we prove the converse.
Let $(A,B)$ be a $2$\dash separation, and let $B_{1}$ and $B_{2}$ be wedges relative to $A$ that satisfy conditions (i), (ii), and (iii).
Assume that $(A,B)$ is not displayed by a circuit or cocircuit node.
Then $(A,B)$ is displayed by an edge $e$ in $T$.
Let $T_{B}$ be the component of $T\ba e$ such that $B=E(T_{B})$, and let $N$ be the node of $T_{B}$ incident with $e$.
Since $(A,B)$ is displayed by $N$, our assumption means $N$ is not a circuit or a cocircuit.
Hence $N$ is a $3$\dash connected component.
Now Proposition \ref{conversecomponents} implies $B_{1}$ and $B_{2}$ are disjoint, so $B-B_{1}=B_{2}$ and $B-B_{2}=B_{1}$ and these wedges are coskew.
This contradicts our assumption, so $(A,B)$ is displayed by a circuit or cocircuit node $N$.
Let $T_{1},\ldots, T_{n}$ be the components of $T-N$ such that $E(T_{i})\subseteq B$ for each $i$, and let $e_{i}$ be the edge joining $N$ to $T_{i}$.

Assume $n\geq 2$, so that we can apply
Proposition \ref{vertexwedges}.
As $B-B_{1}$ and $B-B_{2}$ each have at least two elements, we can assume that $B_{1}=B-E(T_{1})$ and $B_{2}=B-E(T_{2})$.
We express $M$ as $M_{0}\oplus_{2} M_{1}\oplus_{2} M_{2}$, where $E(M_{1}) = E(T_{1})\cup e_{1}$ and $E(M_{2}) = E(T_{1})\cup e_{2}$.
Condition (ii) tells us that there is a cocircuit contained in $(B-B_{1})\cup (B-B_{2}) = E(T_{1})\cup E(T_{2})$ that contains elements from both $E(T_{1})$ and $E(T_{2})$.
This means that $\{e_{1},e_{2}\}$ is a cocircuit in $N$.
This in turn implies that $N$ is a circuit.
If $N$ has degree at least three, then we have nothing left to prove, so we assume this is not the case.
Then $n$ is exactly two, and $A=E(N)-B$.
We can let $C^{*}$ be a cocircuit of $M_{1}$ that contains $e_{1}$.
Then $(C^{*}-e_{1})\cup a$ is a cocircuit of $M$ for any $a\in A$, so $A\cap \cl^{*}(B_{1})\ne \emptyset$, contrary to hypothesis.
Now we must assume that $n < 2$.

If $n=0$, then $B = E(N)-A$.
In this case $B$ has rank or corank equal to one, and a wedge relative to $A$ is $B-b$ for some element $b\in B$.
This is impossible as $B-B_{1}$ has at least two elements.
Therefore $n=1$.

Assume that $B$ contains an element in $E(N)$.
We claim that $B\cap E(N)$ is a wedge relative to $A$.
It is certainly a $2$\dash separating non-empty proper subset of $B$.
If it is properly contained in a $2$\dash separating proper subset of $B$, then that subset is displayed by $e_{1}$, or by a node or an edge in $T_{1}$.
But any such subset contains $A$, and is therefore not a wedge.
Hence $B\cap E(N)$ is a wedge, as we claimed.
Note that $B-b$ is $2$\dash separating for any $b\in B\cap E(N)$.
A simple analysis now shows that the wedges relative to $A$ are $B\cap E(N)$ and $B-b$ for each $b\in B\cap E(N)$.
As $B_{1}\cup B_{2}=B$ implies $B_{1}\ne B_{2}$, we assume without loss of generality that $B_{1}=B-b$ for some $b\in B\cap E(N)$.
This implies $B-B_{1}$ is a singleton set, contrary to hypothesis.
Therefore $B\cap E(N)=\emptyset$.

We let $N_{1}$ be the node joined to $N$ by $e_{1}$, and we observe that $(A,B)$ is displayed by $N_{1}$.
By the previous arguments there is nothing left to prove, unless $N_{1}$ has degree two in $T$, and $B\cap E(N_{1})=\emptyset$.
But this is impossible, as $|E(N_{1})|\geq 3$, and every element of $E(N_{1})$ is either a basepoint, or is contained in $B$.
Now the proof is complete.
\end{proof}

Let $M$ be a connected matroid, and let $(A,B)$ be a $2$\dash separation in $M$.
Assume that the conditions (i) and (ii) from Proposition \ref{characterisingcomponents} are satisfied.
Let $\mathcal{F}_{A}$ be the collection of subsets  $\{A\}\cup\{Z\colon Z\ \text{is a wedge relative to}\ A\}$, and note that the sets in $\mathcal{F}_{A}$ partition $E(M)$, by condition (ii).
We will use the idea that each non-singleton set in $\mathcal{F}_{A}$ is displayed by an edge in $T$, and we will identify the set with the corresponding basepoint element.
Let $\mathcal{I}_{A}$ be the following collection of subsets of $\mathcal{F}_{A}$: we admit $X\subseteq \mathcal{F}_{A}$ to $\mathcal{I}_{A}$ if and only if any circuit of $M$ that is contained in $\cup_{Z\in X}Z$ is contained in some set $Z\in X$.

\begin{proposition}
\label{transducedcomponents}
Let $T$ be the canonical decomposition tree of the connected matroid $M$.
Let $N$ be a $3$\dash connected component incident with the edges $e_{0},e_{1},\ldots, e_{n}$.
For each $i$, let $T_{i}$ be the component of $T\ba e_{i}$ that does not contain $N$.
Set $A$ to be $E(T_{0})$.
Let $\sigma$ be the following bijection from $E(N)$ to $\mathcal{F}_{A}$.
If $x$ is in $E(N) - \{e_{0},\ldots, e_{n}\}$, then set $\sigma(x)$ to be $\{x\}$.
For each $i$ set $\sigma(e_{i})$ to be $E(T_{i})$.
Then $(\mathcal{F}_{A},\mathcal{I}_{A})$ is a matroid, and $\sigma$ is an isomorphism from $N$ to
$(\mathcal{F}_{A},\mathcal{I}_{A})$.
\end{proposition}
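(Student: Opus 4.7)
The plan is to express $M$ as an iterated $2$-sum through $N$ and read off the correspondence element-by-element. For each $i \in \{0, \ldots, n\}$, let $M_i$ be the matroid with ground set $E(T_i) \cup \{e_i\}$ obtained from the subtree of the canonical decomposition tree rooted at $T_i$; by construction $e_i$ is not a separator of $M_i$, and associativity of the $2$-sum yields $M = N \oplus_2 M_0 \oplus_2 \cdots \oplus_2 M_n$ with each $2$-sum taken along $e_i$. First I would verify that $\sigma$ is a bijection: by Proposition \ref{conversecomponents}, the wedges relative to $A$ are precisely the sets $E(T_1), \ldots, E(T_n)$ together with the singletons $\{x\}$ for $x \in E(N) - \{e_0, \ldots, e_n\}$, so $|\mathcal{F}_A| = |E(N)|$ and $\sigma$ is plainly one-to-one and onto.

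The main step is to prove that, for every $Y \subseteq E(N)$, $Y$ is independent in $N$ if and only if $\sigma(Y) \in \mathcal{I}_A$. Once this is established, $\sigma$ is an isomorphism of set-systems, which automatically gives $(\mathcal{F}_A, \mathcal{I}_A)$ the matroid structure of $N$. The argument turns on the standard description of circuits of an iterated $2$-sum: a subset $C \subseteq E(M)$ is a circuit of $M$ if and only if either (a) $C$ is a circuit of some $M_i \ba e_i$, (b) $C$ is a circuit of $N$ disjoint from $\{e_0, \ldots, e_n\}$, or (c) $C = (C_N - S) \cup \bigcup_{e_i \in S}(C_i - e_i)$ where $C_N$ is a circuit of $N$, $S := C_N \cap \{e_0, \ldots, e_n\}$ is non-empty, and each $C_i$ is a circuit of $M_i$ containing $e_i$. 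Set
\[
U := \bigcup_{Z \in \sigma(Y)} Z = (Y - \{e_0, \ldots, e_n\}) \cup \bigcup_{e_i \in Y} E(T_i),
\]
and note that distinct members of $\mathcal{F}_A$ are pairwise disjoint.

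For the forward direction, suppose $Y$ contains a circuit $C_N$ of $N$. Since $N$ has rank and corank at least two, it is simple and cosimple, so $|C_N| \geq 3$. If $S = \emptyset$ then $C_N$ itself is a circuit of $M$ lying in $U$ and meeting three distinct singletons of $\sigma(Y)$. Otherwise, for each $e_i \in S$ pick a circuit $C_i$ of $M_i$ containing $e_i$ (such a circuit exists because $e_i$ is not a separator of $M_i$); the resulting type-(c) circuit lies in $U$ and meets at least two distinct members of $\sigma(Y)$, so $\sigma(Y) \notin \mathcal{I}_A$. Conversely, suppose a circuit $C$ of $M$ lies in $U$ yet crosses at least two members of $\sigma(Y)$. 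A circuit of type (a) is confined to a single $E(T_i)$; if $e_i \notin Y$ then $E(T_i) \cap U = \emptyset$, while if $e_i \in Y$ then $C \subseteq E(T_i) = \sigma(e_i) \in \sigma(Y)$ violates the crossing assumption. So $C$ is of type (b) or (c). A type-(b) circuit lies in $U \cap (E(N) - \{e_0, \ldots, e_n\}) = Y - \{e_0, \ldots, e_n\}$ and is therefore a circuit of $N$ inside $Y$. In a type-(c) circuit, the non-emptiness of each $C_i - e_i \subseteq E(T_i) \cap U$ forces $e_i \in Y$ for every $e_i \in S$, and together with $C_N - S \subseteq Y - \{e_0, \ldots, e_n\}$ this gives $C_N \subseteq Y$. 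Either way, $Y$ is dependent in $N$.

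The only nontrivial obstacle is the precise circuit characterization of the iterated $2$-sum; this is a mechanical unwinding of the two-factor description via associativity, but must be set up carefully so that the three cases above account for every circuit of $M$. Degenerate configurations (loops, two-element circuits in $N$, or $C_i - e_i = \emptyset$) are all excluded because a $3$-connected component is simple and cosimple and because basepoints are never loops in their host matroids.
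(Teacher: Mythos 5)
Your proof is correct and follows essentially the same route as the paper: both identify the wedges via Proposition \ref{conversecomponents}, write $M$ as the iterated $2$\dash sum $N\oplus_{2}M_{0}\oplus_{2}\cdots\oplus_{2}M_{n}$, and match circuits of $N$ with circuits of $M$ crossing the parts of $\mathcal{F}_{A}$. The only difference is presentational — you make the three-case circuit description of the star-shaped $2$\dash sum explicit where the paper argues via a minimal counterexample and leaves the converse verification as ``straightforward from the definition of $2$\dash sums.''
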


\begin{proof}
Note that Proposition \ref{conversecomponents} tells us that the wedges relative to $A$ are $E(T_{1}),\ldots, E(T_{n})$ and the singleton subsets of $E(N)-\{e_{0},\ldots, e_{n}\}$.

We express $M$ as $N\oplus_{2} M_{0}\oplus_{2}\cdots\oplus_{2} M_{n}$, where $E(M_{i}) = E(T_{i})\cup e_{i}$ for each $i$.
Let $X$ be an arbitrary subset of $E(N)$, and let $\{e_{i_{1}},\ldots, e_{i_{t}}\}$ be the intersection of $X$ with $\{e_{0},\ldots, e_{n}\}$.
Then
\[
\sigma(X) = \{E(T_{i_{1}}),\ldots, E(T_{i_{t}})\}\cup \{\{x\}\colon x\in X-\{e_{0},\ldots, e_{n}\}\}.
\]
We claim $X$ is dependent in $N$ if and only if $\sigma(X)\notin \mathcal{I}_{A}$.
Establishing this claim will complete the proof.
Assume for a contradiction that $X$ is minimal with respect to this claim failing.

If $X$ is dependent in $N$, then the minimality of $X$ means it is a circuit of $N$.
For each $i_{k}$ let $C_{i_{k}}$ be a circuit of $N_{i_{k}}$ that contains $e_{i_{k}}$, and note that $C_{i_{k}}-e_{i_{k}}$ is non-empty, since $e_{i_{k}}$ is not a separator of $N_{i_{k}}$.
Now
\[
(X\cup C_{i_{1}}\cup\cdots\cup C_{i_{t}})-\{e_{i_{1}},\ldots, e_{i_{t}}\}
\]
is a circuit of $M$ contained in $\cup_{Z\in\sigma(X)}Z$.
Moreover, this circuit is not contained in a single member of $\sigma(X)$ (as $N$ has no loops).
This tells us that $\sigma(X)$ is not a member of $\mathcal{I}_{A}$, so the claim is satisfied after all.

Now we assume $\sigma(X)$ is not a member of $\mathcal{I}_{A}$.
Let $C$ be a circuit of $M$ contained in $\cup_{Z\in\sigma(X)}Z$ such that $C$ is not contained in any set $Z\in\sigma(X)$.
By the minimality of $X$, we see that $X-\{e_{i_{1}},\ldots, e_{i_{t}}\}\subseteq C$ and $C$ contains elements from each of $E(T_{i_{1}}),\ldots, E(T_{i_{t}})$.
Now it is straightforward to use the definition of $2$\dash sums to verify that $X$ is a circuit of $N$.
\end{proof}

The class of bicircular matroids is not closed under $2$\dash sums, so the next result is not directly useful for our application, but we feel it is likely to be helpful for other projects.
Our proof of Theorem \ref{thm_main} uses the same ideas as Corollary \ref{definedviacomponents}, but requires additional work to analyse the decomposition trees of bicircular matroids.

\begin{corollary}
\label{definedviacomponents}
Let $\mathcal{M}$ be a class of matroids and assume that a matroid is in $\mathcal{M}$ if and only if each connected component is in $\mathcal{M}$, and each connected matroid is in $\mathcal{M}$ if and only if all of its $3$\dash connected components are.
If there is an \mso\ sentence that is satisfied by exactly the $3$\dash connected members of $\mathcal{M}$, then there is a sentence that characterises $\mathcal{M}$.
\end{corollary}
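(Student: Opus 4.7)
The plan is to invoke Lemma \ref{connectedcase} first, which reduces matters to constructing an \mso\ sentence characterising the connected members of $\mathcal{M}$: our hypotheses on $\mathcal{M}$ say that $\mathcal{M}$ is determined by its connected members, and that a connected matroid lies in $\mathcal{M}$ if and only if each of its $3$\dash connected components does. It therefore suffices to produce, from the given sentence $\varphi$ characterising the $3$\dash connected members of $\mathcal{M}$, a sentence $\psi$ satisfied by a connected matroid $M$ if and only if every $3$\dash connected component of $M$ satisfies $\varphi$.

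I would take $\psi$ to be a conjunction of two parts. The first conjunct handles the case in which $M$ is itself $3$\dash connected with rank and corank at least two: there $M$ is its own sole $3$\dash connected component, so the conjunct demands $\varphi(M)$. The second conjunct is a universal quantification over pairs $(A,B)$ such that $(A,B)$ is a $2$\dash separation of $M$ and satisfies conditions (i) and (ii) of Proposition \ref{characterisingcomponents}. By Propositions \ref{characterisingcomponents} and \ref{conversecomponents}, such pairs correspond precisely to choices of a $3$\dash connected component $N$ in the canonical decomposition tree together with an incident edge, and every $3$\dash connected component of $M$ that is not $M$ itself arises from at least one such $(A,B)$. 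For every such $(A,B)$, the conjunct demands that the associated $3$\dash connected component satisfies $\varphi$. If $M$ has rank or corank less than two, then neither conjunct imposes any constraint, matching the fact that such an $M$ has no $3$\dash connected components and so lies vacuously in $\mathcal{M}$.

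To express ``$\varphi$ holds on the $3$\dash connected component associated with $(A,B)$'' in \mso\ over $M$, I would rely on Proposition \ref{transducedcomponents}: that component is isomorphic to $(\mathcal{F}_{A},\mathcal{I}_{A})$, where $\mathcal{F}_{A}$ partitions $E(M)$ into $A$ together with the wedges relative to $A$. A subset of the ground set of the transduced matroid is encoded as a subset $Y\subseteq E(M)$ that is saturated by $\mathcal{F}_{A}$, i.e., is a union of some of the members of $\mathcal{F}_{A}$; this condition, call it $\formula{Saturated}_{A}[Y]$, is \mso\dash expressible by demanding that each wedge (and $A$) is either contained in $Y$ or disjoint from it. The translation $\varphi'$ of $\varphi$ relative to $A$ is then obtained recursively: each $\exists X$ becomes $\exists X(\formula{Saturated}_{A}[X]\land\cdots)$ (and $\forall$ analogously), $X\subseteq X'$ is left unchanged, and $\ind[X]$ is replaced by the statement that $X$ is saturated and every circuit of $M$ contained in $X$ lies inside some single member of $\mathcal{F}_{A}$, which by the definition of $\mathcal{I}_{A}$ is exactly the condition of being independent in the transduced matroid.

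The main technical obstacle is the \mso\dash expressibility of the auxiliary predicates needed throughout: ``$W$ is a wedge relative to $A$'' (a maximal $2$\dash separating non-empty proper subset of $E(M)-A$), the closure and coclosure conditions of Proposition \ref{characterisingcomponents}(i), and the skew and coskew conditions of (ii). Each of these unpacks into an assertion about circuits or cocircuits of $M$ with prescribed intersection patterns, which follows from Proposition \ref{prop_kseparators} together with the standard \mso\ machinery for circuits, cocircuits, and closure developed earlier in this section. Assembling these pieces produces $\psi$, and combining with Lemma \ref{connectedcase} yields the desired sentence characterising $\mathcal{M}$.
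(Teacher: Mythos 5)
Your proposal matches the paper's proof in all essentials: reduce to connected matroids via Lemma \ref{connectedcase}, quantify over the $2$\dash separations satisfying conditions (i) and (ii) of Proposition \ref{characterisingcomponents}, and relativise $\varphi$ to the transduced matroid $(\mathcal{F}_{A},\mathcal{I}_{A})$ of Proposition \ref{transducedcomponents} by restricting quantifiers to unions of members of $\mathcal{F}_{A}$ and reinterpreting $\ind$ via $\mathcal{I}_{A}$. Your additional first conjunct covering the case where $M$ is itself $3$\dash connected (so that the universal quantification over good separations is vacuous) addresses a case the paper's write-up passes over silently, and is a sensible refinement rather than a different route.
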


\begin{proof}
We will construct a sentence that is satisfied by a set-system $M=(E,\mathcal{I})$ if and only if $M$ is a connected matroid in $\mathcal{M}$.
The construction is similar to that in Lemma \ref{connectedcase}.
We assume that the sentence $\varphi = Q_{1}X_{i_{1}}\cdots Q_{n}X_{i_{n}}\ \omega$ in prenex normal form characterises the $3$\dash connected matroids in $\mathcal{M}$.

Our sentence is a conjunction with \formula{2-connected} as one term.
Henceforth we assume that $M$ is a connected matroid.
By Proposition \ref{prop_kseparators}, there is an \mso\ formula that is satisfied by $A$ exactly when $(A, E(M)-A)$ is $2$\dash separating in $M$.
Similarly, we have an \mso\dash characterisation of the wedges relative to $A$.
We certainly have \mso\dash characterisations of circuits and cocircuits.
Therefore we can let $\formula{GoodSeparation}[A]$ be a formula that is satisfied if and only if $(A, E(M)-A)$ is a $2$\dash separation that satisfies conditions (i) and (ii) in Proposition \ref{characterisingcomponents}.

Let $\formula{GoodSet}[A,Z]$ be a formula satisfied by $A$, $Z$ if and only if $A$ satisfies $\formula{GoodSeparation}[A]$, and $Z$ can be expressed as a union of wedges relative to $A$ and (possibly) $A$ itself: that is, if and only if $Z$ is a union of members of $\mathcal{F}_{A}$.
Now let $\formula{Independent}[A,Z]$ be a formula satisfied by $A$, $Z$ if and only if $A$, $Z$ satisfies $\formula{GoodSet}[A]$, and any circuit of $M$ contained in $Z$ is contained in either $A$ or a wedge relative to $A$: that is, if $Z$ can be expressed as $\cup_{Z'\in X}$ where $X$ is in $\mathcal{I}_{A}$.

Now we continue to build our sentence by constructing the other term in the conjunction.
We start with the quantification $\forall A\ \formula{GoodSeparation}[A]\ \to$, and then successively replace each occurrence of $\exists X_{i_{k}}$ in $\varphi$ with $\exists X_{i_{k}}\ \formula{GoodSet}[A, X_{i_{k}}]\ \land$ and replacing each $\forall X_{i_{k}}$ with $\forall X_{i_{k}}\ \formula{GoodSet}[A, X_{i_{k}}]\ \to$.
Finally, in $\omega$, we replace each occurrence of $\ind[X_{i_{k}}]$ with $\formula{Independent}[A, X_{i_{k}}]$.

This sentence is satisfied by the connected matroids such that $(\mathcal{F}_{A},\mathcal{I}_{A})$ satisfies $\varphi$, for every $2$\dash separating set $A$ that satisfies conditions (i) and (ii) from Proposition \ref{characterisingcomponents}.
Proposition \ref{transducedcomponents} tells us that $(\mathcal{F}_{A},\mathcal{I}_{A})$ is isomorphic to a $3$\dash connected component.
Furthermore, it follows immediately from Proposition \ref{transducedcomponents} that the quantification $\forall A\ \formula{GoodSeparation}[A]$ applies to every $3$\dash connected component.
So the sentence will be satisfied by those connected matroids such that every $3$\dash connected component is in $\mathcal{M}$.
Hence our constructed sentence characterises the connected matroids in $\mathcal{M}$.
The result now follows from Lemma \ref{connectedcase}.
\end{proof}

\subsection*{Decompositions of bicircular matroids.}
As we have noted before, the class of bicircular matroids is not closed under $2$\dash sums.
This means that characterising connected bicircular matroids is more complicated than characterising the $3$\dash connected members of the class.
We now move towards such a characterisation.

There are two special situations in which we may realise a $2$\dash sum of two bicircular matroids as a $2$\dash sum operation performed on a pair of graphs representing the summands.  
Let $G_1$ and $G_2$ be graphs with $E(G_1) \cap E(G_2) = \{e\}$, where $e$ is not a separator in $B(G_{1})$ and either $e$ is not a separator in $B(G_2)$, or $G_2$ is a cycle. 
When the distinction is important, we refer to an edge that is not a loop as a \emph{link}. 

\begin{enumerate}[label=\textup{(\roman*)}]
\item Suppose that $e$ is a loop in $G_i$ incident with vertex $v_i$, for $i \in \{1, 2\}$.  
The \emph{loop-sum of $G_1$ and $G_2$ on} $e$ is the graph obtained from the disjoint union of $G_1 - e$ and $G_2 - e$ by identifying vertices $v_1$ and $v_2$. 
\item Suppose that $G_2$ is a cycle, and that $e$ is a link in $G_i$ incident with vertices $u_i, v_i$, for $i \in \{1,2\}$.  
The \emph{link-sum} of $G_1$ and $G_2$ \emph{on} $e$ is the graph obtained from the disjoint union of $G_1 - e$ and $G_2 - e$ by identifying $u_1$ with $u_2$ and $v_1$ with $v_2$.  
\end{enumerate}

\begin{proposition}
\label{looplinksums} 
Let $G_1$ and $G_2$ be graphs with $E(G_1) \cap E(G_2) = \{e\}$, where $e$ is not a separator in $B(G_{1})$ and either $e$ is not a separator in $B(G_2)$, or $G_2$ is a cycle.
If $G$ is the loop-sum of $G_1$ and $G_2$ on $e$, then $B(G)$ is the $2$\dash sum of $B(G_1)$ and $B(G_2)$ with $e$ as its basepoint.
If $G_2$ is a cycle and $G$ is the link-sum of $G_1$ and $G_2$ on $e$, then $B(G)$ is the $2$\dash sum along the basepoint $e$ of $B(G_{1})$ and a circuit on the ground set $E(G_2)$. 
\end{proposition}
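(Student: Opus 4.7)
The plan is to verify that the circuits of $B(G)$ match on the nose those of the relevant $2$-sum, using the three-type description of $2$-sum circuits recalled earlier: circuits of $M_i\setminus e$ for $i=1,2$, together with sets of the form $(C_1-e)\cup(C_2-e)$ where each $C_i$ is a circuit of $M_i$ containing $e$. In both cases the heart of the argument is identifying how a bicycle of $G$ interacts with the ``seam'' where $G_1-e$ and $G_2-e$ are glued.

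For the loop-sum case, let $v$ denote the identified vertex $v_1=v_2$. Then $v$ is a cut vertex of $G$ that separates the edges of $G_1-e$ from those of $G_2-e$, so every cycle in $G$ lies entirely on one side of $v$. Given a bicycle $B$ of $G$, I would argue by minimality that either $B$ is contained in a single side, in which case it is a bicycle of $G_i-e$ and hence a circuit of $B(G_i)\setminus e$, or $B$ contains exactly one cycle on each side joined through $v$ (possibly by non-trivial paths), giving a loose or tight handcuff at $v$. In the latter case, restoring the loop $e$ at $v_i$ to the side-$i$ portion turns it into a bicycle of $G_i$ through $e$ (a tight handcuff if the cycle passes through $v_i$, a loose handcuff otherwise), which establishes a bijection between cross bicycles of $G$ and pairs $(C_1,C_2)$ of $B(G_i)$-circuits through $e$. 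A short case analysis shows that $B$ cannot be a theta subgraph that crosses the cut, since a theta has only two vertices of degree three and the cut structure at $v$ forbids a theta from having one side of its edges on each side of $v$.

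For the link-sum case, let $P=G_2-e$, a path joining the identified endpoints. Every interior vertex of $P$ has degree $2$ in $G$, and every vertex of a theta subgraph, a loose handcuff, or a tight handcuff has degree at least $2$ in that subgraph, so any bicycle of $G$ that contains one edge incident to an interior $P$-vertex must contain both. It follows that a bicycle of $G$ either uses no edge of $P$, in which case it is a bicycle of $G_1-e$ and hence a circuit of $B(G_1)\setminus e$, or it uses all of $E(P)$. In the latter case, contracting $P$ back to the single edge $e$ converts the bicycle into a bicycle of $G_1$ through $e$, and replacing $e$ by $P$ gives the inverse map. The resulting cross bicycle has the form $(C_1-e)\cup E(P)=(C_1-e)\cup(E(G_2)-e)$; since the unique circuit through $e$ of the circuit matroid on $E(G_2)$ is $E(G_2)$ itself, this matches the cross circuits of $B(G_1)\oplus_2 C$ exactly.

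The main obstacle is the minimality analysis in the loop-sum case: I must show that a cross bicycle restricts on each side to the edges of a cycle together with a (possibly trivial) shortest path from that cycle to $v$, and that independently choosing such a configuration on each side yields a genuine bicycle of $G$ with no extraneous edges. This is conceptually transparent but requires running through the three bicycle shapes (theta, loose handcuff, tight handcuff) and checking that no shortcut through $v$ can shrink $B$. Once this is in place, matching the circuit families of $B(G)$ and the $2$-sum is a direct bookkeeping step.
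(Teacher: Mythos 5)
Your proposal is correct and follows exactly the approach the paper takes: the paper's entire proof is the one-line assertion that ``in either case, it is easily checked that the circuits of $B(G)$ and the $2$-sum coincide,'' and your argument simply carries out that check in detail, correctly handling the cut-vertex structure in the loop-sum case and the degree-two path in the link-sum case (including the point that the second summand is the circuit matroid on $E(G_2)$ rather than $B(G_2)$). The ``main obstacle'' you flag is routine: since the identified vertex is a cut vertex, no cycle of $G$ crosses it, so every crossing bicycle is a handcuff whose two sides are precisely handcuffs of $G_1$ and $G_2$ through the loop $e$, which completes the bijection you describe.
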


\begin{proof} 
In either case, it is easily checked that the circuits of $B(G)$ and the $2$\dash sum coincide.
\end{proof}

The next result provides a converse to Proposition \ref{looplinksums}.
Most of the time, a $2$\dash separation in a connected bicircular matroid appears as a loop-sum or a link-sum.

\begin{proposition}
\label{loopseparation}
Let $G$ be a connected graph such that $M=B(G)$ is connected.
Let the ground set of $M$ be $E$.
Let $(A,E-A)$ be a $2$\dash separation such that any series pair of $M$ is contained in either $A$ or $E-A$.
Either $|V(A)\cap V(E-A)|=1$, or one of $G[A]$ or $G[E-A]$ is a path and $V(A)\cap V(E-A)$ is the set of end-vertices of that path.
\end{proposition}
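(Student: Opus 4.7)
The plan is to convert the $2$\dash separation condition into a graph-theoretic inequality on $|V(A)\cap V(E-A)|$, and then leverage the series-pair hypothesis to force the required path structure. Write $B = E-A$ and $S = V(A)\cap V(B)$. Since $M$ is connected with $|E| \ge 4$, $G$ must contain a cycle (otherwise every edge is a coloop, contradicting connectedness of $M$), so $a(E) = 0$ and $r(M) = |V(G)|$. Connectedness of $G$ gives $V(A)\cup V(B) = V(G)$, so inclusion-exclusion yields $|V(A)| + |V(B)| = |V(G)| + |S|$. Combined with the rank formula $r(X) = |V(X)| - a(X)$, this gives $\lambda(A) = |S| - a(A) - a(B)$, so the $2$\dash separation condition $\lambda(A)\le 1$ becomes
\[ |S| \le a(A) + a(B) + 1. \]

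Assume $|S|\ge 2$. I claim first that every $v\in S$ has $\deg_G v \ge 3$. Degree $1$ is ruled out by Proposition \ref{conn}(i). If $\deg_G v = 2$, with incident edges $e\in A$ and $f\in B$, a short direct rank calculation with $r(X) = |V(X)| - a(X)$ (splitting on whether $v$ is a cut vertex, and observing that if $G - v$ were acyclic then one of $e, f$ would be a coloop of $M$, contradicting connectedness) shows that $\st(v) = \{e,f\}$ is a cocircuit of $M$. This is a straddling series pair, contradicting the hypothesis. Consequently, every leaf of any component of $G[A]$ lies in $S$ (else it would be pendent in $G$), has $d_A = 1$ and hence $d_B \ge 2$, and is therefore not a leaf of any component of $G[B]$; and symmetrically.

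Thus the leaf-sets of $G[A]$- and $G[B]$-components form disjoint subsets of $S$, and since each acyclic component is a tree with at least one edge and hence at least two leaves, $|S| \ge 2\bigl(a(A) + a(B)\bigr)$. Combined with the displayed inequality this forces $a(A) + a(B)\le 1$, and together with $|S| \ge 2$ we conclude $|S| = 2$ and $a(A) + a(B) = 1$. Without loss of generality $a(A) = 1$ and $a(B) = 0$. The unique acyclic component of $G[A]$ is then a tree whose two leaves are exactly the elements of $S$, hence a path $P$ with endpoints in $S$. Any other component of $G[A]$ would be cyclic; none of its vertices can lie in $S$ (already used up by the endpoints of $P$), so none can lie in $V(B)$, forcing such a component to be an entire connected component of $G$ — impossible since $G$ is connected. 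Hence $G[A] = P$, and no internal vertex of $P$ can lie in $V(B)$ without being a third element of $S$, so $V(A)\cap V(B)$ is exactly the endpoint set of $P$. The main obstacle is justifying the cocircuit claim for a degree-$2$ vertex without assuming $2$\dash connectedness of $G$, so that Proposition \ref{stars_are_cocircuits} does not apply directly; this is settled by the rank computation sketched above.
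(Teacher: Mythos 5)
Your proof is correct and follows essentially the same route as the paper's: both rest on the identity $\lambda(A)=|V(A)\cap V(E-A)|-(a(A)+a(E-A))$ coming from the rank formula $r(X)=|V(X)|-a(X)$, and both use the series-pair hypothesis to show that no vertex of $V(A)\cap V(E-A)$ can be a leaf on both sides, so that leaves of acyclic components on the two sides are counted disjointly. Your packaging of the count as $2(a(A)+a(E-A))\le |V(A)\cap V(E-A)|\le a(A)+a(E-A)+1$ is a tidier version of the paper's sum over $\alpha_i,\beta_i,l_i,l_i'$, and your justification that a straddling degree-two vertex yields a genuine two-element cocircuit is a point the paper merely asserts.
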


\begin{proof}
Since $M$ is connected, $G$ contains a cycle, so $r(M)$ is $v$, the number of vertices in $G$.
Furthermore, $G$ has no pendent edges.
Let $v_{A}$ and $v_{E-A}$ be the number of vertices in $G[A]$ and $G[E-A]$, and
let $a(A)$ and $a(E-A)$ stand for the number of acyclic components in $G[A]$ and $G[E-A]$, respectively.
Then
\begin{linenomath*}
\begin{multline*}
1 = r(A) + r(E-A) - r(M) = (v_{A}-a(A))+(v_{E-A}-a(E-A))-v\\
=|V(A)\cap V(E-A)| - (a(A)+a(E-A)).
\end{multline*}
\end{linenomath*}
Let $H_{1},\ldots, H_{m}$ be the components of $G[A]$ that contain cycles, and for each $i\in\{1,\ldots, m\}$, let $\alpha_{i}$ be the number of vertices in $H_{i}$ that are in $G[E-A]$, but not leaves of acyclic components in $G[E-A]$.
Now let $T_{1},\ldots, T_{n}$ be the acyclic components of $G[A]$, and let $T_{1}',\ldots, T_{p}'$ be the acyclic components of $G[E-A]$.
For each $i\in\{1,\ldots, n\}$ let $l_{i}$ be the number of leaves in $T_{i}$, and let $\beta_{i}$ be the number of non-leaf vertices in $T_{i}$ that are in $G[E-A]$, but are not leaves of acyclic components in $G[E-A]$.
Finally, for $i\in\{1,\ldots, p\}$, let $l_{i}'$ be the number of leaves in $T_{i}'$.
No vertex can be a leaf in both $T_{i}$ and $T_{j}'$, for this would imply there is a degree-two vertex incident with an edge in $A$ and an edge in $E-A$.
This would imply that there is a series pair not contained in $A$ or $E-A$, in contradiction to the hypotheses.
Therefore each vertex in $V(A)\cap V(E-A)$ is counted by exactly one of the variables $\alpha_{1},\ldots, \alpha_{m}$, $l_{1},\ldots, l_{n}$, $\beta_{1},\ldots, \beta_{n}$, or $l_{1}',\ldots, l_{p}'$.
As each acyclic component has at least two leaves, we conclude that
\begin{linenomath*}
\begin{multline*}
1=\alpha_{1}+\cdots+ \alpha_{m}
+l_{1}+\cdots+ l_{n}
+\beta_{1}+\cdots+\beta_{n}
+l_{1}'+\cdots+ l_{p}' - n - p\\
\geq \alpha_{1}+\cdots+\alpha_{m} +\beta_{1}+\cdots+\beta_{n}
+2(n+p)-(n+p)
\end{multline*}
\end{linenomath*}
If $n>0$, then $n=1$ and $l_{1}=2$.
In this case, $p$, and all the other variables are zero, so $G[A]$ consists of a single acyclic component that has exactly two leaves.
Now it is easy to see that the proposition holds, as it does if $p>0$.
Therefore we assume $n=p=0$.
Hence $G[A]$ and $G[E-A]$ have no acyclic components.
This implies that each $\alpha_{i}$ is the number of vertices in $H_{i}$ that are also in $V(E-A)$.
It follows that $m=1$, and $\alpha_{1}=1$, so the proposition holds once again.
\end{proof}

We next characterise bicircular matroids in terms of their decomposition trees.
Recall that $(M,L)$ is a rooted bicircular matroid if $M=B(G)$ for some graph $G$ in which every element in $L$ is a loop.

\begin{theorem} \label{treebeard}
Let $M$ be a connected matroid and let $T$ be the canonical decomposition tree for $M$. Then $M$ is bicircular if and only if:
\begin{enumerate}[label=\textup{(\roman*)}]
\item every circuit node in $T$ has degree at most two, and 
\item for each $3$\dash connected component, $N$, of $M$, the rooted matroid $(N,L)$ is bicircular, where $L$ is the set of basepoints of $N$ that do not join $N$ to degree-one circuit nodes.
\end{enumerate}
\end{theorem}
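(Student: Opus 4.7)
The plan is to prove both directions of Theorem \ref{treebeard} by induction on $|E(M)|$, driven by the structural characterisation of $2$\dash separations in bicircular matroids provided by Propositions \ref{loopseparation} and \ref{looplinksums}. In the base case $M$ is $3$\dash connected, so $T$ has a single node: condition (i) is vacuous, and condition (ii) either coincides with the hypothesis that $M$ is bicircular (when $M$ is a $3$\dash connected component) or is verified directly in the small cases where $M$ is a circuit or cocircuit (for example, realising $U_{5,6}$ as a tight handcuff of two triangles, or $U_{1,n}$ as a bouquet of $n$ loops at a single vertex).

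For the forward direction, assume $M = B(G)$ has a $2$\dash separation and pick one displayed by an edge of $T$. Such a separation does not split a series pair, so Proposition \ref{loopseparation} forces the graph picture to be either a single shared vertex (a loop-sum) or one side being a path with both endpoints in the shared vertex set (a link-sum). Proposition \ref{looplinksums} then exhibits $M$ as $B(G_{1}) \oplus_{2} B(G_{2})$ at a loop basepoint or as $B(G_{1}) \oplus_{2} U_{n-1,n}$ at a link basepoint. I would apply induction to the smaller summand(s), glue the inductively produced canonical trees along the new basepoint edge, and merge the fresh circuit node with any newly-adjacent circuit node to restore canonicity. The key observation for (i) is that a circuit node in the tree of a bicircular matroid arises only from link-sum contributions, and after canonicalisation a maximal chain of link-sums contributes a single circuit node of degree at most two: after one link-sum the inserted cycle becomes a path inside the graph, and any further $2$\dash sum at a basepoint residing in that path must be realised as a loop-sum, because a second link-sum would require attaching a cycle to a cycle and would collapse canonically. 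The same analysis pins down condition (ii): the basepoints of a $3$\dash connected component $N$ introduced by loop-sums are loops in $G_N$, while those introduced by the link-sum of a single degree-$1$ circuit node (a pendant cycle) are links in $G_N$, matching the definition of $L$.

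For the backward direction, I would reverse the construction, building a graph $G$ by induction on the number of edges of $T$. A $3$\dash connected component $N$ is realised as a graph $G_{N}$ whose loop set contains $L$, available by condition (ii). A cocircuit node $U_{1,n}$ becomes a single vertex with $n$ loops. A circuit node is realised according to its degree: degree zero as a direct bicircular graph; degree one as a cycle whose basepoint is one of its edges (a link basepoint); degree two as a path with a loop attached at each endpoint, the two loops serving as the basepoints (both loop basepoints). Walking along the edges of $T$, each glue is a loop-sum or a link-sum according to the loop/link status of the basepoint, and Proposition \ref{looplinksums} confirms that bicircularity is preserved at each step.

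The principal obstacle will be the degree bound in the forward direction. To make that rigorous I would combine Proposition \ref{degreethreecircuits}, which characterises circuit nodes of degree at least three by a $2$\dash separation with wedges $B_{1},B_{2}$ whose complements in $B$ are not coskew, with Proposition \ref{loopseparation} and the explicit graph-theoretic description of cocircuits of $B(G)$ from the preliminaries (complements of spanning trees, or a bond together with a tree-complement in one component). The ``not coskew'' condition demands a cocircuit straddling two parts of the decomposition in a pattern that the loop-sum or link-sum geometry of $G$ forbids, yielding a contradiction. A secondary technical point is the bookkeeping required when gluing inductively constructed trees and restoring canonicity, but this is routine once the loop/link status of basepoints is tracked through each $2$\dash sum.
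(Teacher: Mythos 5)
Your backward direction and your identification of Propositions \ref{loopseparation} and \ref{looplinksums} as the engine of the proof match the paper, but the forward direction as you have set it up contains two genuine gaps, both stemming from the fact that splitting $T$ along one edge and appealing to induction does not control how degrees and root sets change when the two canonical trees are reassembled. First, for condition (i): when you glue the inductively obtained trees of $M_A$ and $M_B$ along the new basepoint edge, a circuit node of degree two in the tree of $M_B$ that contains the basepoint element acquires a third incident tree edge in $T$ (no merging occurs if the node on the $M_A$ side is not itself a circuit). So the induction hypothesis does not bound the degrees of circuit nodes in $T$, and the ``maximal chain of link-sums'' heuristic is not a proof. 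You correctly flag the degree bound as the principal obstacle and point at Proposition \ref{degreethreecircuits}, but the contradiction still has to be extracted from the graph: the paper does this by reducing a minimal counterexample to the case $T\cong K_{1,3}$ with a circuit centre, using Proposition \ref{loopseparation} to locate single attachment vertices $u_{1},u_{2},u_{3}$, showing they are pairwise distinct, and then exhibiting a two-element cocircuit $\{p_{1},p_{2}\}\subseteq E(N)$ together with a handcuff circuit meeting it in exactly one element. An argument of this calibre is unavoidable and is absent from your sketch.

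Second, for condition (ii): the root set $L$ is not stable under your gluing. The node of the tree of $M_B$ containing the basepoint element is the one whose degree increases in $T$; if it is a degree-one circuit node of that tree, then in $T$ it has degree two, so a basepoint that joined a $3$\dash connected component $N'$ to a degree-one circuit node before gluing now joins it to a degree-two circuit node and must enter $L$; likewise, if that node is a $3$\dash connected component $N$, the basepoint element itself may have to enter $L$. Since $(N,L')$ being rooted bicircular for $L'\subsetneq L$ does not imply that $(N,L)$ is --- you need one graph in which all of $L$ consists of loops simultaneously --- the induction hypothesis does not deliver (ii) for $T$. The paper sidesteps this by working globally with the given representation $M=B(G)$: for each basepoint $e_{i}$ of $N$ it finds the single attachment vertex $u_{i}$ via Proposition \ref{loopseparation}, checks $u_{i}\in V(E(N)-L)$, and then builds a representation of $N$ directly from $G$ by adding a loop at each $u_{i}$ and deleting the corresponding branch, one basepoint at a time. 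To rescue your induction you would need to strengthen the inductive statement (asserting rooted bicircularity with respect to all loops forced at attachment vertices), or switch to this direct global construction.
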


\begin{proof}
Assume that $T$ satisfies conditions (i) and (ii).
Note that any circuit with size at least two can be expressed as the bicircular matroid of a graph that comprises two loops joined by a path.
Thus any two elements of the circuit can be represented by loops in a bicircular representation.
(The fact that no more than two elements can be so represented is the reason the number two appears in (i).)
Let $N$ be an arbitrary node in $T$ that is not a degree-one circuit node.
We will set $G_{N}$ to be a graph such that $N = B(G_{N})$.
By conditions (i) and (ii), we can assume that if the basepoint $e$ joins $N$ in $T$ to a node that is not a degree-one circuit node, then $e$ is a loop in $G_{N}$.
Now let $N$ be a degree-one circuit node, and let $N'$ be the node of $T$ adjacent to $N$.
Note that $N'$ is not a circuit node, so $G_{N'}$ has already been defined.
Let $e$ be the edge of $T$ joining $N$ to $N'$.
If $e$ is a link in $G_{N'}$, then we set $G_{N}$ to be a cycle with the edge-set $E(N)$.
If $e$ is a loop in $G_{N'}$, then we define $G_{N}$ to be a cycle on the edge-set $E(N\ba e)$, with the loop $e$ appended to an arbitrary vertex.
Now it is clear from Proposition \ref{looplinksums} that $M$ is a bicircular matroid: in fact, it is the bicircular matroid of the graph we obtain by summing the $G_{N}$ graphs together using loop-sums and links-sums.

Conversely, suppose $M$ is bicircular, 
and let $G$ be a graph with $B(G)=M$.
We can assume $G$ has no isolated vertices, so it is connected.

We proceed by induction on the number of nodes of $T$. 
Assume that $T$ does not satisfy (i) and (ii), and that amongst such counterexamples, we have chosen $M$ so $T$ is as small as possible.

\begin{claim}
\label{deleteleaf}
If $N$ is a degree-one node in $T$, then $T-N$ satisfies conditions (i) and (ii).
\end{claim}

\begin{proof}
Let $e$ be the edge of $T$ incident with $N$.
Then $M$ can be expressed as $N\oplus_{2} M'$, where $e$ displays the separation $(E(N) - e, E(M')-e)$.
Since $M'$ is isomorphic to a minor of $M$, it too is bicircular.
Moreover, $T-N$ is the canonical decomposition tree for $M'$.
As it has fewer vertices that $T$, it follows that $T-N$ satisfies (i) and (ii).
\end{proof}

\begin{claim}
\label{noleaves}
$T$ has no degree-one circuit vertices.
\end{claim}

\begin{proof}
Assume $N$ is a degree-one circuit node.
Then $T-N$ satisfies (i) and (ii), by Claim \ref{deleteleaf}.
Let $N'$ be the node of $T$ adjacent to $N$, and let $e$ be the edge joining $N$ and $N'$.
No edge of $T$ joins a circuit node to a circuit node, so $N'$ is not a circuit.
Furthermore, the basepoint $e$ joins $N'$ to a degree-one circuit node.
Now it is easy to verify that $T$ also satisfies (i) and (ii).
This is a contradiction.
\end{proof}

\begin{claim}
\label{lowdegreecircuits}
$T$ satisfies condition (i).
\end{claim}

\begin{proof}
Assume otherwise, so $T$ has a circuit node, $N$, with more than two neighbours.
If $N'$ is a degree-one node not adjacent to $N$, then $T-N'$ does not satisfy condition (i), contradicting Claim \ref{deleteleaf}.
So $N$ is adjacent to every leaf of $T$, and by the same argument, $N$ has degree exactly three.
Hence $T$ is isomorphic to $K_{1,3}$.
Let the neighbours of $N$ be $M_{1}$, $M_{2}$, and $M_{3}$, and let $e_{1}$, $e_{2}$, and $e_{3}$ be the edges joining these vertices to $N$.

For $i\in \{1,2,3\}$, let $A_{i}$ be $E(M_{i}) - e_{i}$, and let $B_{i} = E(M) - A_{i}$.
It is easy to prove that any series pair in $M$ is contained in a circuit node of $T$.
Therefore the conditions of Proposition \ref{loopseparation} apply to $(A_{i},B_{i})$.
As $M_{i}$ is not a circuit, it is either a cocircuit or is $3$\dash connected.
In either case, $A_{i}=E(M_{i}\ba e_{i})$ is dependent.
So $G[A_{i}]$ is not a path.
The same argument shows that $G[B_{i}]$ is not a path, so there is a unique node, $u_{i}$, in $V(A_{i}) \cap V(B_{i})$.

Let $i$ and $j$ be distinct integers, and assume for a contradiction that $u_{i}=u_{j}$.
We stated earlier that $A_{i}$ is dependent.
Therefore $G$ contains a handcuff containing the node $u_{i}=u_{j}$, as well as a cycle from $G[A_{i}]$ and another cycle from $G[A_{j}]$.
Moreover we can choose this handcuff so that all of its edges are in $A_{i}\cup A_{j}$.
Now we see there is a circuit of $M=N\oplus_{2} M_{1}\oplus_{2} M_{2} \oplus_{2} M_{3}$ contained in $A_{i}\cup A_{j}$.
From the definition of $2$\dash sum, it follows  without difficulty that $\{e_{i},e_{j}\}$ is a circuit in $N$.
This is impossible as $N$ is a circuit with at least three elements.
Hence $u_{1}$, $u_{2}$, and $u_{3}$ are pairwise distinct vertices of $G$.

Assume for a contradiction that $u_{i}$ is in $G[A_{j}]$.
Then $u_{i}$ is incident with an edge in $A_{j}$.
It is also incident with an edge in $A_{i}$, which is necessarily in $B_{j}$.
Hence $u_{i}$ is in $V(A_{j})\cap V(B_{j})$, implying $u_{i}=u_{j}$.
This contradicts the previous paragraph, so $u_{i}$ is not in $G[A_{j}]$.

By changing the labels as necessary, we can assume that $P_{1}$ is a path in $G$ from $u_{1}$ to $u_{3}$ that does not contain $u_{2}$, and $P_{2}$ is a path from $u_{2}$ to $u_{3}$ that does not contain $u_{1}$.
Let $p_{1}$ be an edge that is in $P_{1}$ but not $P_{2}$, and let $p_{2}$ be an edge in $P_{2}$ but not $P_{1}$.
As $u_{3}$ is not in $G[A_{1}]$, it follows that no edge of $P_{1}$ is in $A_{1}$, so $p_{1}\notin A_{1}$.
By the same reasoning we see that $p_{1}$ and $p_{2}$ are not in $A_{1}\cup A_{2}\cup A_{3}$.
Hence $p_{1}$ and $p_{2}$ are in $E(N) - \{e_{1},e_{2},e_{3}\}$.
This means $\{p_{1},p_{2}\}$ is a cocircuit in $M$.
But we can find a loose handcuff of $G$ that contains $P_{1}$, and otherwise contains only edges from $A_{1}$ and $A_{3}$.
Thus there is a circuit of $M$ that intersects $\{p_{1},p_{2}\}$ in a single element, which is impossible.
\end{proof}

As $T$ is a counterexample to the theorem, we deduce from Claim \ref{lowdegreecircuits} that $(N,L)$ is not a bicircular rooted matroid, for some $3$\dash connected component $N$, where $L$ is the set of basepoints in $N$.
(We recall from Claim \ref{noleaves} that no basepoint joins $N$ to a degree-one circuit.)
Let $L$ be $\{e_{1},\ldots, e_{n}\}$, and for each $i$, let $T_{i}$ be the component of $T\ba e_{i}$ not containing $N$.
Let $A_{i}=E(T_{i})$ and let $B_{i} = E(M) - A_{i}$.
We can express $M$ as $N\oplus_{2} M_{1}\oplus_{2}\cdots\oplus_{2} M_{n}$, where $E(M_{i}) = A_{i}\cup e_{i}$.
As in the proof of Claim \ref{lowdegreecircuits}, we can deduce that 
$V(A_{i}) \cap V(B_{i})$ contains a single vertex, $u_{i}$, for each $i$.

\begin{claim}
$u_{i} \in V(E(N)-L)$ for each $i$.
\end{claim}

\begin{proof}
Assume otherwise, so $u_{i}$ is not incident with an edge in $E(N)-L$.
Let $e\in B_{i}$ be an edge of $G$ incident with $u_{i}$.
Since $e$ is not in $E(N)$, it must be in $A_{j}$ for some $j\ne i$.
But $u_{i}$ is incident with an edge in $A_{i}\subseteq B_{j}$, so $u_{i}$ is in $V(A_{j})\cap V(B_{j}) = \{u_{j}\}$.
Both $G[A_{i}]$ and $G[A_{j}]$ contain cycles.
Thus $G[A_{i}\cup A_{j}]$ contains a handcuff using edges from both $A_{i}$ and $A_{j}$ as well as the vertex $u_{i}=u_{j}$.
Hence $A_{i}\cup A_{j}$ contains a circuit of $M$ that uses elements from both $A_{i}$ and $A_{j}$.
Now it follows that $\{e_{i},e_{j}\}$ is a circuit in $N$, which is impossible as $N$ is simple.
\end{proof}

Let $G_{t}$ be the graph obtained from $G$ by adding a loop labelled $e_{t}$ incident with the vertex $u_{t}$, then deleting all edges in $A_{t}$, and deleting any isolated vertices.
The circuits of $B(G_{t})$ are exactly those circuits of $B(G)=M$ that have an empty intersection with $A_{t}$, along with sets of the form $(C-A_{t})\cup e_{t}$, when $C$ is a circuit of $B(G)$ that contains elements from both $A_{t}$ and $B_{t}$.
But these are exactly the circuits of $N\oplus_{2}M_{1}\oplus_{2}\cdots\oplus_{2} M_{t-1}$.
Hence $B(G_{t})=N\oplus_{2}M_{1}\oplus_{2}\cdots\oplus_{2} M_{t-1}$.
Now we similarly construct $G_{t-1}$ from $G_{t}$ by adding the loop $e_{t-1}$ incident with $u_{t-1}$, and then deleting $A_{t-1}$ and isolated vertices.
We see that $B(G_{t-1})=N\oplus_{2}M_{1}\oplus_{2}\cdots\oplus_{2} M_{t-2}$.
Continuing in this way, we reach the conclusion that $N=B(G_{1})$, where $G_{1}$ is obtained from $G$ by adding loops $e_{1},\ldots, e_{t}$, incident with the vertices $u_{1},\ldots, u_{t}$, then deleting $A_{1}\cup\cdots\cup A_{t}$ and any isolated vertices.
But this shows that $(N,L)$ is a rooted bicircular matroid, so we have a contradiction that completes the proof.
\end{proof}

Now we have assembled the tools required to prove our main theorem.

\begin{proof}[Proof of \textup{Theorem \ref{thm_main}}.]
We construct a sentence that is satisfied by a set-system, $M$, if and only if $M$ is a connected bicircular matroid.
Our sentence is a conjunction containing \formula{2-connected} as a term, so we henceforth assume $M$ is a connected matroid.
Let $T$ be the canonical decomposition tree of $M$.
We rely on the characterisation in Theorem \ref{treebeard}.
Thus $M$ is bicircular if and only if $T$ has no circuit node with degree more than two, and the rooted matroid $(N,L)$ is bicircular whenever $N$ is a $3$\dash connected component and $L$ is the set of basepoints that do not join $N$ to degree-one circuit nodes.

By Proposition \ref{prop_kseparators}, we have an \mso\dash characterisation of $2$\dash separations $(A,B)$, and it follows easily that we can characterise when $X$ is a wedge relative to $A$.
We can also characterise when disjoint subsets are coskew.
So it follows by Proposition \ref{degreethreecircuits} that we can characterise when $T$ has a circuit node of degree at least three.
Henceforth we assume $T$ has no such node.

To construct the remainder of the sentence, we rely on Theorem \ref{lem_3concase}.
Thus the formula $\formula{BicircularLoops}[X]$ is satisfied by $(M,X\mapsto L)$ if and only if $(M,L)$ is a $3$\dash connected rooted bicircular matroid.
Assume $\formula{BicircularLoops}[X]$ is expressed in prenex normal form as $Q_{1}X_{i_{1}}\cdots Q_{n}X_{i_{n}}\ \omega[X]$, where $\omega$ is a quantifier-free formula using the variables $\{X_{i_{1}},\ldots, X_{i_{n}},X\}$.

Now we follow the proof of Corollary \ref{definedviacomponents}.
Let $\formula{GoodSeparation}[A]$ be a formula that is satisfied if and only if $(A,E(M)-A)$ obeys conditions (i) and (ii) in Proposition \ref{characterisingcomponents}.
That is, (defining $B$ to be $E(M)-A)$) 
if and only if $\cl(A)\cap B = \emptyset = \cl^{*}(A)\cap B$, and any two distinct wedges relative to $A$ are disjoint, skew, and coskew.
If $\formula{GoodSeparation}[A]$ is satisfied, then $(A,B)$ is displayed by an edge $e$.
We let $T_{B}$ be the component of $T\backslash e$ such that $B=E(T_{B})$.
Let $N$ be the node of $T_{B}$ incident with $e$.
Then $N$ is a $3$\dash connected component of $M$.
Furthermore, if we quantify over all sets $A$ satisfying $\formula{GoodSeparation}[A]$, then Proposition \ref{conversecomponents} guarantees that we have quantified over every $3$\dash connected component of $M$.

Let $e_{1},\ldots, e_{n}$ be the edges of $T_{B}$ incident with $N$ and let $T_{i}$ be the component of $T\ba e_{i}$ not containing $N$, for each $i$.
The wedges relative to $A$ are $E(T_{1}),\ldots, E(T_{n})$, and the singleton subsets of $E(N)-\{e,e_{1},\ldots, e_{n}\}$, by Proposition \ref{conversecomponents}.
It is an easy exercise to prove that $T_{i}$ consists of a single circuit vertex if and only if $E(T_{i})$ is an independent set of $M$.

Let $\formula{LoopWedges}[A,X]$ be a formula that will be satisfied by $A$, $X$ if and only if $A$ satisfies $\formula{GoodSeparation}$, and furthermore every singleton subset of $X$ is contained in either $A$ or a dependent wedge.
Moreover, we insist that every dependent wedge is a subset of $X$.
Furthermore, if $A$ is dependent, then it is also contained in $X$.
Thus $X$ is the union of all dependent wedges, along with $A$ if $A$ is dependent.
So $X$ is $\cup \sigma(e)$, where we use the notation of Proposition \ref{transducedcomponents}, and the union is taken over all basepoints in $N$ that do not join $N$ to a degree-one circuit vertex.
These are exactly the elements that must be represented by loops in a bicircular representation of $N$, according to Theorem \ref{treebeard}.

Now we construct the remainder of our sentence, using the notation established in the proof of Corollary \ref{definedviacomponents}.
We start with the quantification
\[
\forall A\forall X\ \formula{GoodSeparation}[A] \land \formula{LoopWedges}[A,X] \to.
\]

We then modify $\omega$ by successively replacing
each occurrence of $\exists X_{i_{k}}$ with $\exists X_{i_{k}}\ \formula{GoodSet}[A,X_{i_{k}}] \land$ and each occurrence of $\forall X_{i_{k}}$ with $\forall X_{i_{k}}\ \formula{GoodSet}[A,X_{i_{k}}] \to$.
Finally, we replace each occurrence of $\ind[X_{i_{k}}]$ in $\omega[X]$ with the formula $\formula{Independent}[X_{i_{k}}]$.
It follows from Proposition \ref{transducedcomponents} that this sentence will be satisfied if and only if each $3$\dash connected component $N$ satisfies $\formula{BicircularLoops}[X]$, where $X$ is the set of basepoints in $N$ that do not join it to a degree-one circuit vertex.
So our constructed sentence characterises connected bicircular matroid by Theorem \ref{treebeard}.
\end{proof}

\section{Acknowledgements}

We thank the referee for their careful comments.
Funk and Mayhew were supported by a Rutherford Discovery Fellowship, managed by Royal Society Te Ap\={a}rangi.

%\bibliographystyle{amsplain} 
%\bibliography{Axiom_bicircular.bib}

\begin{bibdiv}

\begin{biblist}

\bib{MR4037634}{article}{
   author={Bowler, Nathan},
   author={Funk, Daryl},
   author={Slilaty, Daniel},
   title={Describing quasi-graphic matroids},
   journal={European J. Combin.},
   volume={85},
   date={2020},
   pages={103062}
   }

\bib{MR3856704}{article}{
   author={Chen, Rong},
   author={Geelen, Jim},
   title={Infinitely many excluded minors for frame matroids and for
   lifted-graphic matroids},
   journal={J. Combin. Theory Ser. B},
   volume={133},
   date={2018},
   pages={46--53}
   }
   
\bib{MR1120878}{article}{
   author={Coullard, Collette R.},
   author={del Greco, John G.},
   author={Wagner, Donald K.},
   title={Representations of bicircular matroids},
   journal={Discrete Appl. Math.},
   volume={32},
   date={1991},
   number={3},
   pages={223--240}
   }
   
\bib{MR1042649}{article}{
   author={Courcelle, Bruno},
   title={The monadic second-order logic of graphs. I. Recognizable sets of
   finite graphs},
   journal={Inform. and Comput.},
   volume={85},
   date={1990},
   number={1},
   pages={12--75}}
   
\bib{MR1451381}{article}{
   author={Courcelle, Bruno},
   title={On the expression of graph properties in some fragments of monadic
   second-order logic},
   conference={
      title={Descriptive complexity and finite models},
      address={Princeton, NJ},
      date={1996},
   },
   book={
      series={DIMACS Ser. Discrete Math. Theoret. Comput. Sci.},
      volume={31},
      publisher={Amer. Math. Soc., Providence, RI},
   },
   date={1997},
   pages={33--62}
   }
   
\bib{MR586989}{article}{
   author={Cunningham, William H.},
   author={Edmonds, Jack},
   title={A combinatorial decomposition theory},
   journal={Canadian J. Math.},
   volume={32},
   date={1980},
   number={3},
   pages={734--765}
   }

\bib{DFG}{article}{
   author={DeVos, Matt},
   author={Funk, Daryl},
   author={Goddyn, Luis},
   title={There are only a finite number of excluded minors for the class of bicircular matroids}, 
   year={2021},
   eprint={arXiv:2102.02929 [math.CO]}
   }

\bib{MR3267062}{article}{
   author={DeVos, Matt},
   author={Funk, Daryl},
   author={Pivotto, Irene},
   title={When does a biased graph come from a group labelling?},
   journal={Adv. in Appl. Math.},
   volume={61},
   date={2014},
   pages={1--18}
   }
  
\bib{MR307951}{article}{
   author={Dowling, T. A.},
   title={A class of geometric lattices based on finite groups},
   journal={J. Combinatorial Theory Ser. B},
   volume={14},
   date={1973},
   pages={61--86}
   }
   
\bib{MR1130377}{article}{
   author={Engelfriet, Joost},
   title={A regular characterization of graph languages definable in monadic
   second-order logic},
   journal={Theoret. Comput. Sci.},
   volume={88},
   date={1991},
   number={1},
   pages={139--150}
   }
   
\bib{FMN-I}{article}{
   author={Funk, Daryl},
   author={Mayhew Dillon},
   author={Newman, Mike},
   title={Tree automata and pigeonhole classes of matroids -- I}, 
   year={2020},
   eprint={arXiv:1910.04360 [math.CO]}
   }
   
\bib{FMN-II}{article}{
   author={Funk, Daryl},
   author={Mayhew Dillon},
   author={Newman, Mike},
   title={Tree automata and pigeonhole classes of matroids -- II}, 
   year={2020},
   eprint={arXiv:1910.04361 [math.CO]}
   }
   
\bib{MR3184116}{article}{
   author={Geelen, Jim},
   author={Gerards, Bert},
   author={Whittle, Geoff},
   title={Structure in minor-closed classes of matroids},
   conference={
      title={Surveys in combinatorics 2013},
   },
   book={
      series={London Math. Soc. Lecture Note Ser.},
      volume={409},
      publisher={Cambridge Univ. Press, Cambridge},
   },
   date={2013},
   pages={327--362}
   }
   
\bib{MR3221124}{article}{
   author={Geelen, Jim},
   author={Gerards, Bert},
   author={Whittle, Geoff},
   title={Solving Rota's conjecture},
   journal={Notices Amer. Math. Soc.},
   volume={61},
   date={2014},
   number={7},
   pages={736--743}
   }

\bib{MR3742182}{article}{
   author={Geelen, Jim},
   author={Gerards, Bert},
   author={Whittle, Geoff},
   title={Quasi-graphic matroids},
   journal={J. Graph Theory},
   volume={87},
   date={2018},
   number={2},
   pages={253--264}
   }
   
\bib{MR2081597}{article}{
   author={Hlin\v{e}n\'{y}, Petr},
   title={On matroid properties definable in the MSO logic},
   conference={
      title={Mathematical foundations of computer science 2003},
   },
   book={
      series={Lecture Notes in Comput. Sci.},
      volume={2747},
      publisher={Springer, Berlin},
   },
   date={2003},
   pages={470--479}
   }
   
\bib{MR2202497}{article}{
   author={Hlin\v{e}n\'{y}, Petr},
   author={Seese, Detlef},
   title={Trees, grids, and MSO decidability: from graphs to matroids},
   journal={Theoret. Comput. Sci.},
   volume={351},
   date={2006},
   number={3},
   pages={372--393}
   }
   
\bib{Huynh09}{thesis}{
   author={Tony Huynh},
   title={The linkage problem for groups-labelled graphs},
   type={Ph.D. thesis},
   date={2009},
   organization={University of Waterloo}
   }
   
\bib{MR3803151}{article}{
   author={Mayhew, Dillon},
   author={Newman, Mike},
   author={Whittle, Geoff},
   title={Yes, the `missing axiom' of matroid theory is lost forever},
   journal={Trans. Amer. Math. Soc.},
   volume={370},
   date={2018},
   number={8},
   pages={5907--5929}
   }
   
\bib{MR2849819}{book}{
   author={Oxley, James},
   title={Matroid theory},
   series={Oxford Graduate Texts in Mathematics},
   volume={21},
   edition={2},
   publisher={Oxford University Press, Oxford},
   date={2011},
   pages={xiv+684}
   }
   
\bib{MR3442541}{article}{
   author={Robbins, Jakayla},
   author={Slilaty, Daniel},
   author={Zhou, Xiangqian},
   title={Clones in 3-connected frame matroids},
   journal={Discrete Math.},
   volume={339},
   date={2016},
   number={4},
   pages={1329--1334}
   }

\bib{MR743800}{article}{
   author={Tur\'{a}n, Gy\"{o}rgy},
   title={On the definability of properties of finite graphs},
   journal={Discrete Math.},
   volume={49},
   date={1984},
   number={3},
   pages={291--302}
   }
   
\bib{MR1088626}{article}{
   author={Zaslavsky, Thomas},
   title={Biased graphs. II. The three matroids},
   journal={J. Combin. Theory Ser. B},
   volume={51},
   date={1991},
   number={1},
   pages={46--72}
   }

\end{biblist}

\end{bibdiv}

\end{document}